\newtheorem{teor}{Theorem}[section]
\newtheorem{lema}[teor]{Lemma}
\newtheorem{corollary}[teor]{Corollary}
\newtheorem{prop}[teor]{Proposition}
\newtheorem{conjecture}[teor]{Conjecture}
\newtheorem{question}[teor]{Question}
\newtheorem*{remark*}{Remark}
\newcounter{theor}
\newtheorem{theorem}[theor]{Theorem}
\newtheorem{proposition}[theor]{Proposition}
\theoremstyle{definition}
\newtheorem{definition}[teor]{Definition}
\newtheorem{example}[teor]{Example}
\newtheorem{remark}[teor]{Remark}
\def\conv{\mathop\mathrm{conv}\nolimits}
\def\B{\mathbb{B}}
\def\s{\mathbb{S}}
\def\L{\mathcal{L}}
\def\R{\mathbb{R}}
\def\N{\mathbb{N}}
\def\Z{\mathbb{Z}}
\def\V{\mathrm{V}}
\def\vol{\mathrm{vol}}
\def\W{\mathrm{W}}
\def\esc#1{\left\langle #1\right\rangle}
\newcommand{\bola}{\mathbb{B}_n}
\newcommand{\enorm}[1]{\left|{#1}\right|}
\newcommand{\dlat}{\mathrm{d}}
\newcommand{\di}{d}
\def\deriv#1#2{\frac{\dlat}{\dlat t}^{\!\!\!#1}\Big|_{t=#2}\,}
\newcommand{\la}{\lambda}
\numberwithin{equation}{section}
\begin{document}

\title[Brunn-Minkowski inequalities]{Brunn-Minkowski inequalities in product Metric Measure Spaces}

\author{Manuel Ritor\'e}
\address{Departamento de Geometr\'ia y Topolog\'ia, Facultad de Ciencias, Universidad de Granada, 18071 Granada, Spain}
\email{ritore@ugr.es}

\author{Jes\'us Yepes Nicol\'as}
\address{Departamento de Matem\'aticas, Universidad de Le\'on, Campus de Vegazana, 24071 Le\'on, Spain}
\email{jyepn@unileon.es}

\thanks{First author is supported by MICINN-FEDER grant
MTM2013-48371-C2-1-P and Junta de Andaluc\'ia grants FQM-325 and
P09-FQM-5088.
Second author is partially supported by ICMAT Severo Ochoa project SEV-2015-0554 (MINECO) and by ``Programa
de Ayudas a Grupos de Excelencia de la Regi\'on de Murcia'', Fundaci\'on
S\'eneca, 19901/GERM/15.}

\date{May 4, 2017}

\subjclass[2010]{Primary 52A40, 28A35; Secondary 60G15, 52A20}

\keywords{Brunn-Minkowski inequality, metric measure space, product space, Gaussian measure, product measure, isoperimetric inequality}

\begin{abstract}
Given one metric measure space $X$ satisfying a linear Brunn-Min\-kows\-ki inequality, and a second one $Y$ satisfying a Brunn-Minkowski inequality with exponent $p\ge -1$, we prove that the product $X\times Y$ with the standard product distance and measure satisfies a Brunn-Minkowski inequality of order $1/(1+p^{-1})$ under mild conditions on the measures and the assumption that the distances are strictly intrinsic. The same result holds when we consider restricted classes of sets. We also prove that a linear Brunn-Minkowski inequality is obtained in $X\times Y$ when $Y$ satisfies a Pr\'ekopa-Leindler inequality.

In particular, we show that the classical Brunn-Minkowski inequality holds for any pair of weakly unconditional sets in $\R^n$ (i.e., those containing the projection of every point in the set onto every coordinate subspace) when we consider the standard distance and the product measure of $n$ one-dimensional real measures with positively decreasing densities. This yields an improvement of the class of sets satisfying the Gaussian Brunn-Minkowski inequality.

Furthermore, associated isoperimetric inequalities as well as recently obtained Brunn-Minkowski's inequalities are derived from our results.
\end{abstract}

\maketitle

\section{Introduction}\label{s:intro}
\thispagestyle{empty}

The $n$-dimensional
volume of a set $M$ in the $n$-dimensional Euclidean space $\R^n$ (i.e., its $n$-dimensional Lebesgue
measure) is denoted by $\vol(M)$, or $\vol_n(M)$ if the distinction of the
dimension is useful. The symbol
$\bola$ stands for the $n$-dimensional closed unit ball with respect to the Euclidean norm $\enorm{\,\cdot\,}$.

Relevant families of subsets of Euclidean space used in this work are those of unconditional and weakly unconditional sets:
a subset $A\subset\R^n$ is said to be \emph{unconditional} if
for every $(x_1,\dots,x_n)\in A$ and every $(\epsilon_1,\dots,\epsilon_n)\in[-1,1]^n$ one has
\[(\epsilon_1x_1,\dots,\epsilon_nx_n)\in A.\]
In a similar way, we will say that $A$ is \emph{weakly unconditional} (see Figure \ref{figure: weakly_unc}) if for every $(x_1,\dots,x_n)\in A$ and every $(\epsilon_1,\dots,\epsilon_n)\in\{0,1\}^n$ one has
\[(\epsilon_1x_1,\dots,\epsilon_nx_n)\in A.\]

Weakly unconditional sets are those for which the projection of every point in the set onto any coordinate subspace is again contained in the set.
Equivalently, a set $A$ is weakly unconditional if and only if e\-ve\-ry non-empty $1$-section of $A$, through parallel lines to the coordinate axes, contains the origin (identifying the corresponding $1$-dimensional affine subspace with its direction; cf. Figure \ref{figure: weakly_unc}).

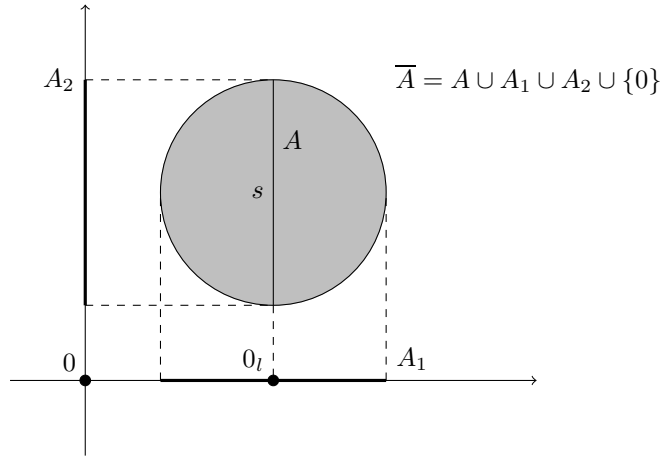
\begin{figure}[h]
\begin{tikzpicture}
\filldraw [lightgray] (2.5,2.5) circle [radius=1.5] ;
\draw (2.5,2.5) circle [radius=1.5] ;
\draw[dashed] (0,4) -- (2.5,4) ;
\draw[dashed] (0,1) -- (2.5,1) ;
\draw[dashed] (4,0) -- (4,2.5) ;
\draw[dashed] (1,0) -- (1,2.5) ;
\draw (2.5,4) -- (2.5,1) ;
\draw[dashed] (2.5,1) -- (2.5,0) ;
\draw (2.5,2.5) node[left]{$s$} ;
\draw (2.5,3.2) node[right]{$A$} ;
\draw[->] (-1,0) -- (6,0) ;
\draw[->] (0,-1) -- (0,5) ;
\filldraw [black] (0,0) circle [radius=2pt] node[above left]{$0$} ;
\filldraw [black] (2.5,0) circle [radius=2pt] node[above left]{$0_l$} ;
\draw[very thick] (1,0) -- (4,0) node[above right]{$A_1$} ;
\draw[very thick] (0,1) -- (0,4) node[left]{$A_2$} ;
\draw (4,4) node[right]{$\overline{A}=A\cup A_1\cup A_2\cup\{0\}$} ;
\end{tikzpicture}
\caption{The weakly unconditional hull $\overline{A}$ of $A$. Every $1$-section $s$ of $\overline{A}$, through a parallel line $l$ to a coordinate axis, contains the origin $0_l$.}\label{figure: weakly_unc}
\end{figure}

Given an arbitrary non-empty set $B\subset\R^n$, $\overline{B}$ will denote its \emph{weakly unconditional hull} (i.e., the intersection of all weakly unconditional sets containing $B$), which is just the union of $B$ with every projection of it onto any coordinate subspace. The unconditional hull of $B$ is defined in a similar way, see Figure~\ref{figure: weakly_unc_hull}.

\begin{figure}[h]
\begin{tikzpicture}
\filldraw [lightgray] (-1.5,-1.5) -- (1.5,-1.5) -- (1.5,1.5) -- (-1.5,1.5) -- (-1.5,-1.5) ;
\draw (-1.5,-1.5) -- (1.5,-1.5) -- (1.5,1.5) -- (-1.5,1.5) -- (-1.5,-1.5) ;
\filldraw [black] (1.5,1.5) circle [radius=2pt] ;
\draw (1.5,1.5) circle [radius=4pt]  node[right=2pt]{$p$};
\filldraw [black] (1.5,0) circle [radius=2pt] ;
\filldraw [black] (0,1.5) circle [radius=2pt] ;
\filldraw [black] (0,0) circle [radius=2pt]  node[above right]{$0$};
\draw[->] (-2,0) -- (2.5,0) node[above]{$x$} ;
\draw[->] (0,-2) -- (0,2.5) node[right]{$y$};
\end{tikzpicture}
\caption{The unconditional hull of the point $p\in\R^2$ is the gray square, while the weakly unconditional hull of $p$ consists of the four marked points}
\label{figure: weakly_unc_hull}
\end{figure}
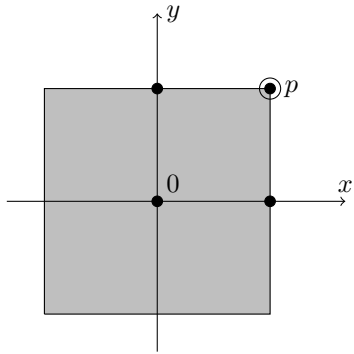

It is worth mentioning that an unconditional set in $\R$ is an interval symmetric with respect to the origin, and that a weakly unconditional set in $\R$ is just a set containing the origin.

Another notion used in this paper is that of \emph{positively decreasing} function.
We say that a non-negative function $f:\R\longrightarrow\R_{\geq0}$ is positively decreasing if the functions
$t\mapsto f(t)$, $t\mapsto f(-t)$ are decreasing (i.e., non-increasing) on $[0,\infty)$.

\smallskip

The Minkowski sum of two non-empty sets $A, B\subset\R^n$ denotes the classical vector addition of them:
$A+B=\{a+b:\, a\in A, \, b\in B\}$.
It is natural to wonder about the possibility of relating the volume of the Minkowski sum of two sets in terms of their volumes; this is the statement of the \emph{Brunn-Minkowski inequality}.
Indeed, taking $\lambda\in(0,1)$ and $A$ and $B$ two non-empty (Lebesgue) measurable subsets of $\R^n$ such that their linear combination
\begin{equation}\label{e:lincomb}
(1-\lambda)A+\lambda B=\{(1-\lambda)a+\lambda b:\,a\in A,\, b\in B\}
\end{equation}
is also measurable, then the Brunn-Minkowski inequality ensures that
\begin{equation}\label{e:BM}
\vol\bigl((1-\lambda)A+\lambda B\bigr)^{1/n}\geq
(1-\lambda)\vol(A)^{1/n}+\lambda\vol(B)^{1/n}.
\end{equation}

The above inequality is sometimes introduced in a weaker form (although they are actually equivalent because of the homogeneity of the volume),
often referred to as its multiplicative or dimension free version:
\begin{equation}\label{e:mult_BM}
\vol\bigl((1-\lambda)A+\lambda
B\bigr)\geq\vol(A)^{1-\lambda}\vol(B)^{\lambda}.
\end{equation}

The Brunn-Minkowski inequality admits an equivalent analytic version, \emph{the Pr\'e\-ko\-pa-Leindler inequality},
originally proved in \cite{Prekopa} and \cite{Leindler}, from where inequality \eqref{e:mult_BM} can be immediately obtained.
\begin{theorem}[Pr\'ekopa-Leindler Inequality]\label{t:PrekopaLeindler}
Let $\lambda\in (0,1)$ and let $f,g,h:\R^n\longrightarrow\R_{\geq0}$ be non-negative measurable
functions such that, for any $x,y\in\R^n$,
\begin{equation*}\label{e:PrekopaLeindlerCondicion}
h\bigl((1-\lambda) x+\lambda y\bigr)\geq f(x)^{1-\lambda}g(y)^{\lambda}.
\end{equation*}
Then
\begin{equation*}\label{e:PrekopaLeindler}
\int_{\R^n}h \,\dlat x\geq
\left(\int_{\R^n}f\,\dlat x\right)^{1-\lambda}\left(\int_{\R^n}g\,\dlat x\right)^{\lambda}.
\end{equation*}
\end{theorem}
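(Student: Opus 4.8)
The plan is to prove the inequality by induction on the dimension $n$, exploiting the fact that the hypothesis $h\bigl((1-\lambda)x+\lambda y\bigr)\geq f(x)^{1-\lambda}g(y)^{\lambda}$ tensorizes: it survives restriction to parallel slices, and the resulting one-variable marginals again satisfy a Pr\'ekopa--Leindler hypothesis. The whole argument thus reduces to the one-dimensional base case, which I would handle by passing to superlevel sets and invoking the one-dimensional Brunn--Minkowski inequality for sets.

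For the base case $n=1$ I would first dispose of the degenerate situations: if $\int_{\R}f\,\dlat x=0$ or $\int_{\R}g\,\dlat x=0$ the right-hand side vanishes and nothing is to prove, so I may assume both integrals, hence both suprema, are positive. Rescaling $f,g,h$ to $f/\sup f$, $g/\sup g$ and $h/\bigl((\sup f)^{1-\lambda}(\sup g)^{\lambda}\bigr)$ preserves both the hypothesis and the conclusion, so I may assume $\sup f=\sup g=1$. The key observation is that for every $t\in(0,1)$ the superlevel sets satisfy
\[
\{h\geq t\}\supseteq(1-\lambda)\{f\geq t\}+\lambda\{g\geq t\},
\]
since $f(x)\geq t$ and $g(y)\geq t$ force $h\bigl((1-\lambda)x+\lambda y\bigr)\geq t^{1-\lambda}t^{\lambda}=t$. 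Applying the one-dimensional Brunn--Minkowski inequality $\enorm{(1-\lambda)A+\lambda B}\geq(1-\lambda)\enorm{A}+\lambda\enorm{B}$ for subsets of $\R$ (which follows by scaling from the elementary estimate $\enorm{A+B}\geq\enorm{A}+\enorm{B}$, proved by observing that $A+\min B$ and $\max A+B$ lie in $A+B$ and overlap in a single point), the layer-cake formula and $f,g\leq1$ give
\[
\int_{\R}h\,\dlat x\geq\int_0^1\enorm{\{h\geq t\}}\,\dlat t
\geq(1-\lambda)\int_0^1\enorm{\{f\geq t\}}\,\dlat t+\lambda\int_0^1\enorm{\{g\geq t\}}\,\dlat t
=(1-\lambda)\int_{\R}f\,\dlat x+\lambda\int_{\R}g\,\dlat x.
\]
The weighted arithmetic-geometric mean inequality then upgrades this to $\int h\geq\bigl(\int f\bigr)^{1-\lambda}\bigl(\int g\bigr)^{\lambda}$, which is the normalized conclusion.

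For the inductive step I would assume the statement in dimension $n-1$ and write points of $\R^n$ as $(x',s)$ with $x'\in\R^{n-1}$ and $s\in\R$. Fixing $a,b\in\R$ and setting $c=(1-\lambda)a+\lambda b$, I consider the slices $f_a(x')=f(x',a)$, $g_b(x')=g(x',b)$, $h_c(x')=h(x',c)$. Since $(1-\lambda)(x',a)+\lambda(y',b)=\bigl((1-\lambda)x'+\lambda y',\,c\bigr)$, the hypothesis descends to $h_c\bigl((1-\lambda)x'+\lambda y'\bigr)\geq f_a(x')^{1-\lambda}g_b(y')^{\lambda}$, so the $(n-1)$-dimensional case yields $H(c)\geq F(a)^{1-\lambda}G(b)^{\lambda}$, where $F(a)=\int_{\R^{n-1}}f_a$, $G(b)=\int_{\R^{n-1}}g_b$ and $H(c)=\int_{\R^{n-1}}h_c$. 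These three functions on $\R$ satisfy the Pr\'ekopa--Leindler hypothesis in dimension one, so the base case gives $\int_{\R}H\geq\bigl(\int_{\R}F\bigr)^{1-\lambda}\bigl(\int_{\R}G\bigr)^{\lambda}$, and Fubini's theorem identifies these integrals with $\int_{\R^n}h$, $\int_{\R^n}f$ and $\int_{\R^n}g$, closing the induction.

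I expect the measurability bookkeeping, rather than any of the inequalities, to be the main obstacle. In the base case one must ensure the superlevel sets and their Minkowski combinations are (inner) measurable, and in the inductive step the marginals $F,G,H$ must be measurable for Fubini to apply. The standard remedies are to work throughout with inner Lebesgue measure, or to reduce by inner regularity to the case of compact $f,g$ — which makes all sumsets closed, hence measurable, and is precisely the setting in which the one-dimensional Brunn--Minkowski inequality is cleanest — and then pass to the limit.
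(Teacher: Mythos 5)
The paper offers no proof of Theorem~\ref{t:PrekopaLeindler}: it is quoted as a classical result with references to \cite{Prekopa} and \cite{Leindler}, so there is no in-paper argument to match. Your proposal is the standard textbook proof (the one reproduced, e.g., in Gardner's survey \cite{G}, which the paper cites): induction on the dimension, with a one-dimensional base case obtained by normalizing the suprema, passing to superlevel sets via $\{h\geq t\}\supseteq(1-\lambda)\{f\geq t\}+\lambda\{g\geq t\}$, applying the linear one-dimensional Brunn--Minkowski inequality, integrating by the layer-cake formula, and finishing with the weighted arithmetic--geometric mean inequality; the slicing/Fubini inductive step is also correct, and your measurability remarks are exactly right (the Minkowski combination of level sets need not be Lebesgue measurable, and inner regularity with compact sets is the standard remedy, just as the paper itself notes before Proposition~\ref{p:meas_union_inters}). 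It is worth observing that your base-case scheme is precisely the template the authors transplant to metric measure spaces: the proofs of Theorems~\ref{t:linBM_ProdMetSpa} and~\ref{t:BM_ProdMetSpa} normalize sections by $\enorm{\mu_Y(A(\cdot))}_\infty$, derive the inclusion $\{h\geq t\}\supset(1-\lambda)\{f\geq t\}\star\lambda\{g\geq t\}$, and integrate via Cavalieri's principle, so your argument reconstructs the classical ancestor of the paper's own method.

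One small repair is needed: your normalization $f/\sup f$, $g/\sup g$ presupposes $\sup f,\sup g<\infty$, and positivity of the integrals gives positivity of the suprema but not their finiteness. Truncate first, replacing $f$ and $g$ by $\min\{f,M\}$ and $\min\{g,M\}$ (the hypothesis on $h$ is preserved since these minorize $f$ and $g$), run your argument, and let $M\to\infty$ by monotone convergence. With that one-line addition the proof is complete.
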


The Pr\'ekopa-Leindler inequality is naturally connected to log-concave functions (i.e., functions of the form $e^{-u}$ where $u\,:\,\R^n\longrightarrow\R\cup\{\infty\}$ is convex). In other words, a non-negative function $\phi$ is log-concave if $\phi((1-\lambda)x+\lambda y)\geq \phi(x)^{1-\lambda}\phi(y)^\lambda$ for any $x,y\in\R^n$ and $\lambda\in(0,1)$. In the same way, $f$ is \emph{quasi-concave} if $\phi((1-\lambda)x+\lambda y)\geq \min\{\phi(x),\phi(y)\}$, or equivalently if its superlevel sets are convex. In particular, if $\mu$ is a measure on $\R^n$ such that $\dlat\mu(x)=\phi(x)\,\dlat x$, where $\phi$ is log-concave (a measure like this will be called a \emph{log-concave measure}) then, by Theorem \ref{t:PrekopaLeindler},
\begin{equation}\label{e:BM_mult_meas}
\mu\bigl((1-\lambda)A+\lambda
B\bigr)\geq\mu(A)^{1-\lambda}\mu(B)^{\lambda}.
\end{equation}
for any $A,B\subset\R^n$ measurable such that $(1-\lambda)A+\lambda B$ is so, for $\lambda\in(0,1)$.

However, although multiplicative Brunn-Minkowski type inequalities \eqref{e:BM_mult_meas} may be easily obtained (it is enough to consider log-concave measures), something very different occurs when dealing with the $(1/n)$-form of the Brunn-Minkowski inequality \eqref{e:BM}.

A relevant space where the general Brunn-Minkowski inequality does not hold for arbitrary sets is the so-called \emph{Gauss space}: $\R^n$ endowed with the standard $n$-dimensional Gaussian measure given by
$$\dlat\gamma_n(x)=\frac{1}{(2\pi)^{n/2}}e^{\frac{-\enorm{x}^2}{2}}\dlat x.$$
Indeed, since $1/(2\pi)^{n/2}\,e^{\frac{-\enorm{x}^2}{2}}$
is log-concave, $\gamma_n$ satisfies \eqref{e:BM_mult_meas}; however, it is easy to see that
\begin{equation}\label{e:GaussBM}
\gamma_n\bigl((1-\lambda)A+\lambda B\bigr)^{1/n}\geq
(1-\lambda)\gamma_n(A)^{1/n}+\lambda\gamma_n(B)^{1/n}
\end{equation}
does not hold \emph{in general}, as it was pointed out in \cite[Section~7]{GaZv} via $A=\bola$ and $B=x+\bola$ for $x$ large enough.

To this respect, since a condition on the `position of the sets' is clearly needed, the authors conjectured (Question $7.1$) the following.
\begin{conjecture}[\cite{GaZv}]\label{conjecture}
The Gaussian Brunn-Minkowski inequality \eqref{e:GaussBM} holds for any closed convex sets $A, B$ such that $0\in A\cap B$.
\end{conjecture}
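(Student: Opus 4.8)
The plan is to convert the global inequality \eqref{e:GaussBM} into a local, second-order statement and then into a spectral (Poincar\'e-type) inequality on the boundary of a convex body. First, \eqref{e:GaussBM} for all closed convex sets containing the origin is equivalent to the concavity, on $[0,1]$, of $t\mapsto V(t)^{1/n}$ with $V(t)=\gamma_n\bigl((1-t)A+tB\bigr)$; note the interpolating family stays convex and contains $0$, so the class is preserved. After a standard approximation by smooth, strictly convex bodies $K$ with $0\in K$, and writing the Minkowski deformation infinitesimally as a normal perturbation with height $h$ on $\partial K$ (the difference of support functions transported by the Gauss map), the problem reduces to the second-variation inequality
\begin{equation*}
n\,\gamma_n(K)\,\mathcal{Q}_K(h)\ \leq\ (n-1)\Bigl(\int_{\partial K}h\,\varphi\,\di\mathcal{H}^{n-1}\Bigr)^2,
\end{equation*}
where $\varphi(x)=(2\pi)^{-n/2}e^{-\enorm{x}^2/2}$ is the Gaussian density and $\mathcal{Q}_K(h)=V''(0)$ is the second variation of the Gaussian volume.

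The key feature is the constant. The easy multiplicative bound \eqref{e:BM_mult_meas}, coming from log-concavity, is exactly the concavity of $\log V$, i.e. $V\,V''\leq(V')^2$; the dimensional inequality above asks for the strictly stronger $V\,V''\leq\frac{n-1}{n}(V')^2$. Thus everything hinges on gaining the factor $\frac{n-1}{n}<1$ over the trivial constant $1$. Following the Kolesnikov--Milman calculus I would rewrite $\mathcal{Q}_K(h)$ by integration by parts through the Ornstein--Uhlenbeck operator $L$, $Lf=\Delta f-\langle x,\nabla f\rangle$: solving the Neumann problem $Lu=\text{const}$ in $K$ with $\partial_N u=h$ on $\partial K$ expresses $\mathcal{Q}_K(h)$ as a boundary term in the weighted second fundamental form corrected by a bulk Dirichlet energy $\int_K\enorm{\nabla u}^2\varphi$, and the sought factor must then be supplied by the Gaussian Poincar\'e inequality, whose spectral gap equals $1$ and whose first eigenfunctions are precisely the coordinate functions $x_1,\dots,x_n$.

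The heart of the matter, and the step I expect to fail in the stated generality, is the control of these linear (eigenvalue-$1$) modes. The $n$ coordinate directions are exactly the resource that should produce the dimensional factor, but the trivial constant $1$ is attained by the linear modes themselves; to beat it one needs the component of $h$ along the linear functions to be small relative to the rest. For an \emph{origin-symmetric} body this is automatic, since parity forces the Gaussian barycenter to vanish, $\int_{\partial K}x_i\,\varphi\,\di\mathcal{H}^{n-1}=0$, decoupling the odd (linear) modes from the even part and restoring the gap. Under the bare hypothesis $0\in A\cap B$ this barycenter need not vanish, a single linear mode can saturate the inequality, and the $\frac{n-1}{n}$ gain is lost.

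I would therefore stress-test the statement before investing in the full spectral estimate, using two reductions that isolate the linear mode. Taking $B=(1+\varepsilon)A$ gives $(1-\lambda)A+\lambda(1+\varepsilon)A=(1+\lambda\varepsilon)A$ for convex $A$, so \eqref{e:GaussBM} collapses to the concavity of $r\mapsto\gamma_n(rA)^{1/n}$ and probes the regime $0\in\partial A$; taking $B=A+v$ with $-v\in A$ gives $(1-\lambda)A+\lambda(A+v)=A+\lambda v$, reducing \eqref{e:GaussBM} to the concavity of $t\mapsto\gamma_n(A+tv)^{1/n}$, whose first derivative $-\langle v,\int_A x\,\varphi\rangle$ is governed precisely by the (non-vanishing, for asymmetric $A$) Gaussian barycenter. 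My expectation is that, in sufficiently anisotropic and asymmetric configurations, the linear mode forces non-concavity; in other words the conjecture as worded is likely false, and the hypothesis must be strengthened---most naturally to central symmetry of $A$ and $B$---for the spectral step, and hence the whole plan, to close. Pinning down an explicit counterexample, or conversely certifying the spectral inequality for all origin-containing bodies, is the genuine obstacle.
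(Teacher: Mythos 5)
Your skepticism is vindicated: the conjecture as worded is false, and the paper does not prove it---it records its refutation. Immediately after stating the conjecture (Question 7.1 of \cite{GaZv}), the paper cites the counterexample of Nayar and Tkocz \cite{NaTk}, given explicitly in \eqref{e:ContraejGBM}: $A=\{(x,y)\in\R^2:\, y\geq\enorm{x}\tan\alpha\}$ and $B=A+(0,-\varepsilon)$, with $\varepsilon>0$ small and $\alpha<\pi/2$ close to $\pi/2$. Note that this is precisely an instance of your second stress-test: $B=A+v$ with $-v=(0,\varepsilon)\in A$, a thin, highly anisotropic and asymmetric cone, so the failure mechanism is exactly the translation/barycenter mode you isolated. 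Where your proposal stops short is at the step you yourself flagged as ``the genuine obstacle'': you predict non-concavity of $t\mapsto\gamma_n(A+tv)^{1/n}$ in such configurations but exhibit no witness. No Kolesnikov--Milman second-variation or spectral machinery is needed to close this gap; for the shifted cone one can estimate $\gamma_2\bigl(A+t(0,-\varepsilon)\bigr)$ directly and check the failure of $1/2$-concavity by elementary means, which is what \cite{NaTk} does. As a self-contained argument your text establishes neither the statement nor its negation, so it remains a (correctly oriented) heuristic rather than a proof or disproof.

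A second, more substantive correction concerns your proposed repair. Strengthening ``$0\in A\cap B$'' to central symmetry overshoots what this paper shows is actually needed: Theorem \ref{t:BM_prod_quasi-conc} proves \eqref{e:GaussBM} (indeed, for any product of one-dimensional measures with positively decreasing densities, with no evenness assumption on the densities) for \emph{weakly unconditional} sets, i.e., sets containing the projections of each of their points onto every coordinate subspace---a condition far weaker than unconditionality, let alone symmetry, and compatible with very asymmetric bodies. Remark \ref{r:GaussBM} explains why this is essentially the optimal reformulation: the sets in \eqref{e:ContraejGBM} contain the origin and their projections onto the $y$-axis, and miss weak unconditionality only through points of the $x$-axis, yet \eqref{e:GaussBM} fails for them. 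Moreover, the paper's positive result is obtained not by a spectral estimate but by an elementary sectioning argument: a linear ($p=1$) Brunn--Minkowski inequality in one dimension for sets containing the origin (Example \ref{ex:2}) is propagated through the product structure via Corollary \ref{c1}, the dimensional exponent emerging from a reverse H\"older inequality rather than from a Poincar\'e gap. The genuinely symmetric case, where your parity-based decoupling of the linear modes would operate, is a separate line of research not pursued here; within this paper, the correct fix to the conjecture is weak unconditionality, not symmetry.
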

In the same paper \cite{GaZv} the authors were able to verify this conjecture when both sets are coordinate boxes containing the origin and when one of them is a slab containing the origin: $[-a_1,a_2]\times\R^{n-1}$ for $a_1,a_2>0$ (as well as when the sets are both dilates of the same \emph{centrally symmetric} closed convex set).

In \cite{NaTk}, the authors showed that Conjecture \ref{conjecture} is in general not true: it is enough to consider
\begin{equation}\label{e:ContraejGBM}
   A=\{(x,y)\in\R^2:\, y\geq\enorm{x}\tan\alpha\}, \quad B=A+(0,-\varepsilon),
\end{equation}
for $\varepsilon>0$ small enough and $\alpha<\pi/2$ sufficiently close to $\pi/2$.

At this point, it is natural to ask for the weakest assumption in relation to the position of the sets needed to obtain Brunn-Minkowski's inequality \eqref{e:GaussBM}.
In a recent paper, \cite[Theorem~1]{LiMaNaZv}, the authors have proved in a very elegant way that \eqref{e:GaussBM} holds for the case of \emph{unconditional sets}. More precisely, the authors show:
\begin{theorem}[\cite{LiMaNaZv}]\label{t:LiMaNaZv}
Let $\mu=\mu_1\times\dots\times\mu_n$ be a product measure on $\R^n$ such that $\mu_i$ is the measure given by $\dlat\mu_i(x)=\phi_i(x)\,\dlat x$,
where $\phi_i:\R\longrightarrow\R_{\geq0}$ is a positively decreasing even function, $i=1,\dots,n$.

Let $\lambda\in(0,1)$ and let $\emptyset\neq A,B\subset\R^n$ be unconditional measurable sets so that $(1-\lambda)A+\lambda B$ is also measurable.
Then
\begin{equation*}
\mu((1-\lambda)A+\lambda B)^{1/n}\geq(1-\lambda)\mu(A)^{1/n}+\lambda\mu(B)^{1/n}.
\end{equation*}
\end{theorem}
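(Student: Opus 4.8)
The plan is to reduce the statement to the classical Brunn--Minkowski inequality \eqref{e:BM} for the Lebesgue measure, via a change of variables that flattens each density into a constant, exploiting crucially that unconditional sets are symmetric under coordinate reflections and that the primitives of the densities are concave. First I would record two elementary consequences of the hypotheses. Writing $C=(1-\lambda)A+\lambda B$, a direct check on coordinatewise products shows that $C$ is again unconditional: if $c=(1-\lambda)a+\lambda b$ and $\epsilon\in[-1,1]^n$ acts coordinatewise, then $\epsilon\cdot c=(1-\lambda)(\epsilon\cdot a)+\lambda(\epsilon\cdot b)\in C$. Next, since each $\phi_i$ is even and each of $A,B,C$ is invariant under all coordinate reflections, the $2^n$ orthant pieces of such a set have equal $\mu$-measure, so $\mu(S)=2^n\mu(S\cap\R_{\ge0}^n)$ for $S\in\{A,B,C\}$ (the coordinate hyperplanes being $\mu$-null). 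Thus it suffices to work in the closed positive orthant $\R_{\ge0}^n$, where unconditionality translates into the property of being a \emph{down-set}: if $x\in S\cap\R_{\ge0}^n$ and $0\le y\le x$ coordinatewise, then $y\in S$.

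Next I would introduce the primitives $\Phi_i(t)=\int_0^t\phi_i(s)\,\dlat s$ for $t\ge0$, and the map $T(x)=(\Phi_1(x_1),\dots,\Phi_n(x_n))$ on $\R_{\ge0}^n$. Since $\phi_i$ is non-increasing on $[0,\infty)$, each $\Phi_i$ is non-decreasing and \emph{concave} with $\Phi_i(0)=0$, and the substitution $u_i=\Phi_i(x_i)$ transforms $\prod_i\phi_i(x_i)\,\dlat x$ into the Lebesgue measure $\dlat u$. Consequently, setting $\widetilde S=T(S\cap\R_{\ge0}^n)$, one gets $\vol_n(\widetilde S)=\mu(S\cap\R_{\ge0}^n)=2^{-n}\mu(S)$. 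Because each $\Phi_i$ is non-decreasing, $\widetilde S$ inherits from $S\cap\R_{\ge0}^n$ the structure of a down-set in $\R_{\ge0}^n$.

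The crux is the inclusion $(1-\lambda)\widetilde A+\lambda\widetilde B\subseteq\widetilde C$. Given $a\in A\cap\R_{\ge0}^n$ and $b\in B\cap\R_{\ge0}^n$, put $c=(1-\lambda)a+\lambda b\in C\cap\R_{\ge0}^n$. By concavity of each $\Phi_i$, coordinatewise
$$T(c)_i=\Phi_i\big((1-\lambda)a_i+\lambda b_i\big)\ge(1-\lambda)\Phi_i(a_i)+\lambda\Phi_i(b_i)=\big((1-\lambda)T(a)+\lambda T(b)\big)_i,$$
so $(1-\lambda)T(a)+\lambda T(b)\le T(c)$ with $T(c)\in\widetilde C$; since $\widetilde C$ is a down-set, the smaller point also lies in $\widetilde C$. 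This proves the inclusion. Applying the classical Brunn--Minkowski inequality \eqref{e:BM} to $\widetilde A,\widetilde B$ and using the volume identities yields
$$2^{-n}\mu(C)=\vol_n(\widetilde C)\ge\vol_n\big((1-\lambda)\widetilde A+\lambda\widetilde B\big)\ge 2^{-n}\Big((1-\lambda)\mu(A)^{1/n}+\lambda\mu(B)^{1/n}\Big)^n,$$
and taking $n$-th roots gives the asserted inequality.

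I expect the main obstacle to be purely technical: justifying the change of variables and the down-set claim for $\widetilde S$ when $\Phi_i$ fails to be strictly increasing (i.e.\ when some $\phi_i$ vanishes on an interval), together with the measurability of $\widetilde A,\widetilde B$ and of the combination $(1-\lambda)\widetilde A+\lambda\widetilde B$ needed to invoke \eqref{e:BM}. These points are handled by the continuity of $\Phi_i$ (so that any value in $[0,\Phi_i(x_i)]$ is attained at some $y_i\le x_i$) and, if necessary, by an outer-measure form of \eqref{e:BM} or a routine approximation; they are precisely the ``mild conditions on the measures'' alluded to in the statement. The conceptual heart, by contrast, is the single observation that concavity of the primitives (equivalent to the densities being positively decreasing) combines with the down-set structure of unconditional sets to push the Minkowski combination inside the image set.
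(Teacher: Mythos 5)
Your proof is correct, but it takes a genuinely different route from the paper's. The paper never proves Theorem \ref{t:LiMaNaZv} directly: it derives the strictly stronger Theorem \ref{t:BM_prod_quasi-conc} (weakly unconditional sets, densities not assumed even) by recursively applying Corollary \ref{c1} with $X=\R$, $Y=\R^m$, i.e.\ the machinery of Theorem \ref{t:BM_ProdMetSpa}: normalize the section functions $\mu_Y(A(\cdot)),\mu_Y(B(\cdot))$ by their suprema, pass to superlevel sets via Cavalieri's principle, apply the one-dimensional linear inequality $\overline{\mathrm{BM}}(1)$ of Example \ref{ex:2} to sets containing the origin, and recombine with the reverse H\"older inequality. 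You instead transport $\mu$ to Lebesgue measure: reduce to the positive orthant by the reflection symmetries (this is exactly where evenness and unconditionality enter), push forward by the concave primitives $T=(\Phi_1,\dots,\Phi_n)$, deduce $(1-\lambda)\widetilde A+\lambda\widetilde B\subseteq\widetilde C$ from concavity plus the down-set property, and invoke the classical inequality \eqref{e:BM}. The technical caveats you flag are genuine but standard: each $\phi_i\le\phi_i(0)$, so $T$ is Lipschitz and coordinatewise non-decreasing, hence $T(E)$ is measurable and the area formula gives $\vol_n\bigl(T(E)\bigr)=\int_E\prod_i\phi_i(x_i)\,\dlat x$, the fibers of $T$ being non-singletons only over countably many hyperplanes of the image (a null set); and the measurability of $(1-\lambda)\widetilde A+\lambda\widetilde B$ is bypassed by applying \eqref{e:BM} to compact $K\subseteq\widetilde A$, $L\subseteq\widetilde B$, whose combination is compact and contained in the measurable set $\widetilde C$. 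As for what each approach buys: yours is shorter, self-contained, and reduces the statement directly to the Lebesgue case; but the orthant reduction needs the $2^n$-fold symmetry (evenness of the $\phi_i$) and the inclusion step needs the down-set structure (full unconditionality), so your argument proves precisely Theorem \ref{t:LiMaNaZv} and does not extend to the paper's Theorem \ref{t:BM_prod_quasi-conc} --- weakly unconditional sets are not down-sets and their sections are arbitrary origin-containing sets rather than intervals --- nor to abstract product metric measure spaces, which is exactly the extra generality the paper's inductive, linear-BM-based argument is designed to capture.
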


In the same paper the authors pose the following:
\begin{question}
Can one remove the assumption
of unconditionality in the Gaussian Brunn-Minkowski inequality?
\end{question}

Here we give a positive answer to this question by showing that it is enough to consider weakly unconditional sets.
Furthermore, this may be regarded as a positive answer to (the necessary slight reformulation of) Conjecture \ref{conjecture}, see Remark \ref{r:GaussBM}.
One of our main results reads, in the more general setting of product of measures with positively decreasing densities, as follows:

\begin{teor}\label{t:BM_prod_quasi-conc}
Let $\mu=\mu_1\times\dots\times\mu_n$ be a product measure on $\R^n$ such that $\mu_i$ is the
measure given by $\dlat\mu_i(x)=\phi_i(x)\,\dlat x$,
where $\phi_i:\R\longrightarrow\R_{\geq0}$ is a positively decreasing function, $i=1,\dots,n$.

Let $\lambda\in(0,1)$ and let $\emptyset\neq A,B\subset\R^n$ be weakly unconditional measurable sets such that $(1-\lambda)A+\lambda B$ is also measurable. Then
\begin{equation}\label{e:BM_prod_quasi-conc}
\mu((1-\lambda)A+\lambda B)^{1/n}\geq(1-\lambda)\mu(A)^{1/n}+\lambda\mu(B)^{1/n}.
\end{equation}
\end{teor}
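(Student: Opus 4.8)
The plan is to argue by induction on the dimension $n$, peeling off the last coordinate and coupling a weighted one–dimensional Brunn–Minkowski inequality with the inductive hypothesis applied to sections; note that Theorem~\ref{t:LiMaNaZv} cannot simply be quoted, since here the densities need not be even and the sets are only weakly unconditional. Throughout write $\mu=\mu'\times\mu_n$ with $\mu'=\mu_1\times\dots\times\mu_{n-1}$, and for $S\subset\R^n$ and $t\in\R$ let $S_t=\{x'\in\R^{n-1}:(x',t)\in S\}$ be the section. Three elementary consequences of weak unconditionality drive everything: taking $(\epsilon_1,\dots,\epsilon_n)=0$ gives $0\in A\cap B$; choosing $\epsilon_n=0$ and the remaining $\epsilon_i=1$ gives $A_t\subseteq A_0$ for every $t$ (so $t\mapsto\mu'(A_t)$ attains its maximum at $t=0$); and choosing $\epsilon_n=1$ shows each section $A_t$ is itself weakly unconditional in $\R^{n-1}$.

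The base case $n=1$ is the weighted \emph{linear} inequality that I would isolate as a lemma: if $\phi$ is positively decreasing and $X,Y\subset\R$ are measurable and contain the origin, then $\nu((1-\lambda)X+\lambda Y)\ge(1-\lambda)\nu(X)+\lambda\nu(Y)$, where $\dlat\nu=\phi\,\dlat t$. I would prove it by the layer–cake formula $\nu(S)=\int_0^\infty|S\cap I_\rho|\,\dlat\rho$, where $I_\rho=\{\phi>\rho\}$ is, since $\phi$ is positively decreasing, an interval containing $0$; as $X\cap I_\rho$ and $Y\cap I_\rho$ both contain $0$ and $I_\rho$ is convex, one has $\bigl((1-\lambda)X+\lambda Y\bigr)\cap I_\rho\supseteq(1-\lambda)(X\cap I_\rho)+\lambda(Y\cap I_\rho)$, so the one–dimensional case of \eqref{e:BM} gives the bound level by level, and additivity of the inequality lets it survive integration in $\rho$. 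This same lemma is, crucially, also the engine of the inductive step.

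For the inductive step set $C=(1-\lambda)A+\lambda B$ and profiles $a(t)=\mu'(A_t)$, $b(u)=\mu'(B_u)$, $c(s)=\mu'(C_s)$. Since $C_s\supseteq(1-\lambda)A_t+\lambda B_u$ whenever $s=(1-\lambda)t+\lambda u$, and $A_t,B_u$ are weakly unconditional in $\R^{n-1}$, the inductive hypothesis gives the pointwise power–mean bound $c(s)^{1/(n-1)}\ge(1-\lambda)a(t)^{1/(n-1)}+\lambda b(u)^{1/(n-1)}$; that is, $c$ dominates the $p$–mean of $a,b$ with $p=1/(n-1)$. Passing to super–level sets $A_\theta=\{t:a(t)\ge\theta\}$, etc., this yields $C_\Theta\supseteq(1-\lambda)A_{\theta_1}+\lambda B_{\theta_2}$ with $\Theta$ the $p$–mean of $\theta_1,\theta_2$; here the centering $A_t\subseteq A_0$ guarantees $0\in A_{\theta_1}\cap B_{\theta_2}$ for all thresholds below the respective maxima, so the base–case lemma applies to them and produces $\nu_n(C_\Theta)\ge(1-\lambda)\nu_n(A_{\theta_1})+\lambda\nu_n(B_{\theta_2})$ with $\dlat\nu_n=\phi_n\,\dlat t$. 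Feeding this into $\int c\,\dlat\nu_n=\int_0^\infty\nu_n(C_\Theta)\,\dlat\Theta$ and optimizing over the coupling of $\theta_1,\theta_2$ is exactly the one–dimensional Borell–Brascamp–Lieb computation, which upgrades the $p$–mean hypothesis to the $\tfrac{p}{1+p}$–mean conclusion; with $p=1/(n-1)$ this exponent is $1/n$, and since $\int c\,\dlat\nu_n=\mu(C)$, $\int a\,\dlat\nu_n=\mu(A)$, $\int b\,\dlat\nu_n=\mu(B)$, this is \eqref{e:BM_prod_quasi-conc}.

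The main obstacle is precisely this positive–exponent integration: unlike the log–concave setting governed by Pr\'ekopa–Leindler (Theorem~\ref{t:PrekopaLeindler}), which corresponds to $p=0$ and yields only the multiplicative form \eqref{e:BM_mult_meas}, tracking the exponent from $1/(n-1)$ down to $1/n$ needs the genuinely $p$–dependent level optimization, and the weight $\phi_n$ is \emph{not} log–concave. What makes it go through is the interplay the hypotheses force: positive monotonicity makes the super–level sets of $\phi_n$ intervals about the origin, while weak unconditionality anchors the super–level sets of the profiles at the origin, so the additive weighted estimate of the base case is always available where the mass concentrates. I would finally dispatch the routine technical points—measurability of the sections and of $(1-\lambda)A_t+\lambda B_u$, and the degenerate cases $\mu(A)\mu(B)=0$—by the usual reduction to compact sets and approximation, as in the classical proof of \eqref{e:BM}.
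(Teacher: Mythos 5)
Your proposal is correct and follows essentially the same route as the paper: there, Theorem \ref{t:BM_prod_quasi-conc} is proved by recursively applying Corollary \ref{c1} with $X=\R$, $Y=\R^m$ and the families of weakly unconditional sets, and the content of that corollary (i.e., the proof of Theorem \ref{t:BM_ProdMetSpa}) is exactly your inductive step — section profiles coupled proportionally to their sup-norms, superlevel sets anchored at the origin by weak unconditionality, the weighted linear inequality on the line, Cavalieri's principle, and the reverse H\"older/mean inequality upgrading the exponent from $1/(n-1)$ to $1/n$ — with the one-dimensional base case taken from Example \ref{ex:2}. The only substantive differences are presentational: you prove the one-dimensional linear lemma self-containedly via the layer-cake decomposition over the intervals $\{\phi_n>\rho\}$ where the paper quotes \cite[Theorem~4.1]{CoSGYN}, and you dispatch the degenerate cases $\mu(A)\mu(B)=0$ by generic compact approximation where the paper invokes the proof of Proposition \ref{p:BMzeromeas_sets} together with $0\in A\cap B$.
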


We would like to point out that, in contrast to Theorem \ref{t:LiMaNaZv}, not only the ``symmetry assumption'' on the sets may be removed but also that of the density functions involved in the product measure.

\smallskip

Our approach is based on obtaining new Brunn-Minkowski inequalities from certain simpler spaces that previously satisfy some Brunn-Minkowski type inequality. To this end, we have the following definition.

\begin{definition}\label{d:1}
Let $(X,\di,\mu)$ be a metric measure space. We will say that it \emph{satisfies the Brunn-Minkowski inequality} with respect to the parameter $p\in\R\cup\{\pm\infty\}$, and we denote it by BM($p$) for short, if
\begin{equation}\label{e:BM(p)}
\mu(C)\geq\bigl((1-\lambda)\mu(A)^p+\lambda\mu(B)^p\bigr)^{1/p}
\end{equation}
holds for all $\lambda\in(0,1)$ and any non-empty measurable sets $A, B, C$ with $\mu(A)\mu(B)>0$ such that $C\supset(1-\lambda)A\star_\di\lambda B$ (for a precise formulation of the operation $\star_\di$ see Definition \ref{d:lincomb_metric} in Subsection~\ref{ss:distance}).

If \eqref{e:BM(p)} holds for general non-empty measurable sets $A, B, C$ such that $C\supset(1-\lambda)A\star_\di\lambda B$, without the restriction $\mu(A)\mu(B)>0$, we say that
$(X,\di,\mu)$ \emph{satisfies the general Brunn-Minkowski inequality} with respect to the parameter $p\in\R\cup\{\pm\infty\}$. We denote it by $\overline{\mathrm{BM}}$($p$) for short.

In the same way, we say that a certain family $\mathcal{F}\subset \mathcal{P}(X)$ of measurable sets satisfies $\mathrm{BM}(p)$ (resp.~$\overline{\mathrm{BM}}(p)$) if the above definition holds when dealing with sets $A, B, C\in\mathcal{F}$.

On the other hand, we will say that $(X,\di,\mu)$ satisfies the Pr\'ekopa-Leindler inequality, denoted by PL for short, if
for any $\lambda\in (0,1)$ and non-negative $\mu$-measurable
functions $f,g,h:X\longrightarrow\R_{\geq0}$ such that
\begin{equation*}
h(z)\geq f(x)^{1-\lambda}g(y)^{\lambda},
\end{equation*}
for all $x,y\in X$, and $z\in(1-\lambda)\{x\}\star_\di\lambda\{y\}$,
then
\begin{equation*}
\int_{X}h\,\dlat\mu\geq
\left(\int_{X}f\,\dlat\mu\right)^{1-\lambda}\left(\int_{X}g\,\dlat\mu\right)^{\lambda}.
\end{equation*}

In the same way, a function $\phi:X\longrightarrow\R_{\geq0}$ will be said to be log-concave if
\begin{equation*}
\phi(z)\geq \phi(x)^{1-\lambda}\phi(y)^{\lambda},
\end{equation*}
for all $x,y\in X$, $z\in(1-\lambda)\{x\}\star_\di\lambda\{y\}$, and any $\lambda\in(0,1)$.
\end{definition}

With this notation we obtain our second main result:

\begin{teor}\label{t:BM_ProdMetSpa}
Let $(X,\di_X,\mu_X)$, $(Y,\di_Y,\mu_Y)$ be metric measure spaces where $\mu_X$ is $\sigma$-finite, $\mu_Y$ is locally finite and $\mu_{X\times Y}$ is a Radon measure.

If $(X,\di_X,\mu_X)$, $(Y,\di_Y,\mu_Y)$ satisfy $\overline{\mathrm{BM}}(1)$ and $\mathrm{BM}(p)$, respectively, for some $p\geq-1$,
then $(X\times Y,\di_{X\times Y},\mu_{X\times Y})$ satisfies $\mathrm{BM}\bigl(1/(1+p^{-1})\bigr)$.
\end{teor}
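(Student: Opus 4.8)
The plan is to establish \eqref{e:BM(p)} for $X\times Y$ by a metric-space version of the Borell--Brascamp--Lieb argument: I would \emph{slice} $X\times Y$ over the first factor, use $\mathrm{BM}(p)$ in $Y$ fibrewise to control the $\mu_Y$-measures of the fibres, and then use $\overline{\mathrm{BM}}(1)$ in $X$ together with a layer-cake integration over the value axis to assemble these into $\mathrm{BM}(q)$ with $q=1/(1+p^{-1})$. The exponent bookkeeping is governed by the identity $1/q=1+p^{-1}$: the factor $Y$ enters with reciprocal exponent $p^{-1}$, while the extra unit is the increment produced by integrating over the one ``$\mathrm{BM}(1)$-direction'' $X$. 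Throughout set $\M_p^{(\lambda)}(s,t)=\bigl((1-\lambda)s^p+\lambda t^p\bigr)^{1/p}$ for $s,t>0$, extended to $s,t\ge0$ and to $p\in\{-\infty,0,+\infty\}$ by the usual conventions (so $\M_0^{(\lambda)}(s,t)=s^{1-\lambda}t^{\lambda}$, $\M_{-\infty}^{(\lambda)}=\min$, and $\M_p^{(\lambda)}(s,t)=0$ when $st=0$ and $p\le0$); recall that each $\M_p^{(\lambda)}$ is non-decreasing in both arguments. Here $p\ge-1$ is exactly the range in which the final value-axis inequality, and the target mean $\M_q^{(\lambda)}$, make sense.

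Fix $\lambda\in(0,1)$ and non-empty measurable $A,B,C\subset X\times Y$ with $\mu_{X\times Y}(A)\mu_{X\times Y}(B)>0$ and $C\supset(1-\lambda)A\star_{\di_{X\times Y}}\lambda B$. For $x\in X$ write $A_x=\{y\in Y:(x,y)\in A\}$ for the fibre, and put $a(x)=\mu_Y(A_x)$, defining $b,c$ analogously from $B,C$. The first step is purely metric: since the distances are strictly intrinsic, geodesics of the standard product distance split into geodesics of the factors, so that $x_0\in(1-\lambda)\{x_1\}\star_{\di_X}\lambda\{x_2\}$ and $y_0\in(1-\lambda)\{y_1\}\star_{\di_Y}\lambda\{y_2\}$ force $(x_0,y_0)\in(1-\lambda)\{(x_1,y_1)\}\star_{\di_{X\times Y}}\lambda\{(x_2,y_2)\}$ (cf.\ Subsection~\ref{ss:distance}). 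Hence, for every $x_0\in(1-\lambda)\{x_1\}\star_{\di_X}\lambda\{x_2\}$ one gets the fibrewise inclusion $C_{x_0}\supset(1-\lambda)A_{x_1}\star_{\di_Y}\lambda B_{x_2}$; applying $\mathrm{BM}(p)$ in $Y$ on those pairs with $a(x_1)b(x_2)>0$ then yields $c(x_0)\ge\M_p^{(\lambda)}\bigl(a(x_1),b(x_2)\bigr)$.

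Next I would pass to superlevel sets in the base. Fix levels $s_1,s_2\ge0$ such that $\{a>s_1\}$ and $\{b>s_2\}$ are non-empty, and take $x_1\in\{a>s_1\}$, $x_2\in\{b>s_2\}$, $x_0\in(1-\lambda)\{x_1\}\star_{\di_X}\lambda\{x_2\}$; then $a(x_1)b(x_2)>0$, so the pointwise inequality together with the monotonicity of $p$-means gives $c(x_0)\ge\M_p^{(\lambda)}(s_1,s_2)$. Thus $\{c\ge\M_p^{(\lambda)}(s_1,s_2)\}\supset(1-\lambda)\{a>s_1\}\star_{\di_X}\lambda\{b>s_2\}$, and $\overline{\mathrm{BM}}(1)$ in $X$ produces $\mu_X\bigl(\{c\ge\M_p^{(\lambda)}(s_1,s_2)\}\bigr)\ge(1-\lambda)\,\mu_X(\{a>s_1\})+\lambda\,\mu_X(\{b>s_2\})$. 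It is here that the \emph{general} form $\overline{\mathrm{BM}}(1)$ is essential rather than $\mathrm{BM}(1)$, since the superlevel sets of $a$ and $b$ may be $\mu_X$-null even though $A,B$ have positive measure.

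Writing $m(s)=\mu_X(\{a>s\})$, $n(s)=\mu_X(\{b>s\})$ and $\ell(t)=\mu_X(\{c\ge t\})$, the previous display reads $\ell\bigl(\M_p^{(\lambda)}(s_1,s_2)\bigr)\ge(1-\lambda)m(s_1)+\lambda n(s_2)$ for all admissible $s_1,s_2$, while the layer-cake formula gives $\int_X a\,\dlat\mu_X=\int_0^\infty m$ and likewise for $b,c$; so the theorem reduces to $\int_0^\infty\ell\ge\M_q^{(\lambda)}\bigl(\int_0^\infty m,\int_0^\infty n\bigr)$. To finish I would pass to the inverse profiles $m^\ast(r)=\sup\{s:m(s)>r\}$, and similarly $n^\ast,\ell^\ast$; a short monotonicity computation turns the last displayed inequality into $\ell^\ast\bigl((1-\lambda)r_1+\lambda r_2\bigr)\ge\M_p^{(\lambda)}\bigl(m^\ast(r_1),n^\ast(r_2)\bigr)$, which is precisely the hypothesis of the classical one-dimensional Borell--Brascamp--Lieb inequality on the value axis; since $\int_0^\infty m^\ast=\int_0^\infty m$ (areas of subgraphs) and $1/q=1+p^{-1}$, applying it yields the desired bound. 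I expect the genuine difficulties to lie not in this closing computation but in (i) the measurability of the fibre functions $a,b,c$ and the legitimacy of Fubini and of the layer-cake identities in the abstract product---this is exactly what $\sigma$-finiteness of $\mu_X$, local finiteness of $\mu_Y$ and Radonness of $\mu_{X\times Y}$ are meant to secure---and (ii) the bookkeeping of degenerate fibres and empty or null superlevel sets, together with the product-geodesic description of $\star_{\di_{X\times Y}}$, which rests on the distances being strictly intrinsic.
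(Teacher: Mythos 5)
Your proposal is correct, and it reaches the conclusion by a genuinely different final assembly than the paper's. Both arguments begin identically: slice over $X$, use Proposition~\ref{p:starProdSpac} (only the easy inclusion $\star^\prime\subset\star_\rho$ is needed) to get the fibrewise inclusion $C(t_\lambda)\supset(1-\lambda)A(s)\star\lambda B(r)$, apply $\mathrm{BM}(p)$ in $Y$ fibrewise, pass to superlevel sets in the base, apply $\overline{\mathrm{BM}}(1)$ in $X$, and integrate by Cavalieri. The divergence is in how the $p$-mean bookkeeping is closed. The paper first reduces to compact $A,B$ via Radon-ness, normalizes the section functions by their sup norms, $f(t)=\mu_Y(A(t))/\enorm{\mu_Y(A(\cdot))}_\infty$ etc., so that the fibrewise $p$-mean inequality collapses to $h(t_\lambda)\geq\min\{f(s),g(r)\}$; superlevel sets are then compared at a \emph{single} level $t\in[0,1)$, the integration yields a linear bound, and the exponent $1/(1+p^{-1})$ emerges at the very end from the reverse H\"older inequality. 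You instead keep the unnormalized fibre measures, retain two independent levels $s_1,s_2$ in the superlevel inequality $\ell\bigl(M_p(s_1,s_2,\lambda)\bigr)\geq(1-\lambda)m(s_1)+\lambda n(s_2)$, and reduce via the inverse profiles $m^\ast,n^\ast,\ell^\ast$ to the hypothesis of the one-dimensional Borell--Brascamp--Lieb inequality (Theorem~\ref{t:BBL} with $n=1$, where indeed $p/(p+1)=1/(1+p^{-1})$), thereby outsourcing the exponent arithmetic to BBL rather than doing it by hand. Your monotonicity computation for the inverse profiles does go through (if $s_i<m^\ast(r_i)$, resp.\ $n^\ast$, then $m(s_1)>r_1$, $n(s_2)>r_2$, so $\ell>\,(1-\lambda)r_1+\lambda r_2$ at every $t<M_p(m^\ast(r_1),n^\ast(r_2),\lambda)$), and the degenerate levels are harmless because the BBL hypothesis, with the paper's convention $M_p(a,b,\lambda)=0$ whenever $ab=0$ \emph{for all} $p$ (your parenthetical restricting this to $p\leq0$ slightly misstates the convention, but this only makes your task easier), is vacuous when $m^\ast(r_1)n^\ast(r_2)=0$. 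Two trade-offs are worth noting. First, your route avoids the normalization and hence does not intrinsically need finite sup norms; but the paper's compactness reduction serves a second purpose you should not skip entirely, namely taming infinite values of $m,n,\ell$ (and of $\mu(A),\mu(B)$) before the rearrangement step --- you can simply borrow that reduction verbatim, since it uses only that $\mu_{X\times Y}$ is Radon. Second, the paper's case analysis handles $p=0$ and $p=\infty$ uniformly alongside $p\neq0,\infty$ via the Arithmetic--Geometric mean inequality and a direct estimate, whereas your reduction to Theorem~\ref{t:BBL} covers $-1\leq p<\infty$ in one stroke but the case $p=\infty$ needs a separate (easy) remark, as the quoted BBL statement is for $-1/n\leq p\leq\infty$ and is fine, provided you check the conventions at the endpoint. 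Measurability of the fibre functions and the a.e.\ measurability of sections are handled at the same level of detail in the paper's own proof, so your deferral of these points to the standing hypotheses matches the paper's standard of rigor.
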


In the statement of Theorem~\ref{t:BM_ProdMetSpa}, we follow the usual convention of assuming that the quantity $1/(1+p^{-1})$ is equal to $0$ when $p=0$, and equal to $1$ when $p=\infty$. As pointed out to the authors by Luca Rizzi, Theorem~\ref{t:BM_ProdMetSpa} can be generalized when $Y$ satisfies the weighted BM$(1/n,N/n)$ Brunn-Minkowski inequality
\[
\mu(C)^{1/n}\ge (1-\la)^{N/n}\mu(A)^{1/n}+\la^{N/n}\mu(B)^{1/n}.
\]
In this case the product $X\times Y$ satisfies the weighted Brunn-Minkowski inequality BM$(1/(n+1),(N+1)/(n+1))$. See Theorem~\ref{t:BM_ProdMetSpa_gen}. This result yields, in the setting of product metric spaces, the same weighted Brunn-Minkowski inequality to the one in corank $1$ Carnot groups obtained in \cite{balogh2}.

The linear Brunn-Minkowski inequality $\overline{\mathrm{BM}}(1)$ plays a relevant role and becomes one of the main ingredients in this paper. Thus we also focus on this type of inequalities. To this aim, we have the following result in the setting of product metric spaces endowed with log-concave measures.

\begin{teor}\label{t:BM_lin1_MetSpa}
Let $(X,\di_X,\mu_X)$, $(Y,\di_Y,\mu_Y)$ be metric measure spaces which satisfy $\overline{\mathrm{BM}}(1)$ and $\mathrm{PL}$, respectively. Let $\mu$ be the measure on $X\times Y$ given by
$\dlat\mu(x,y)=\phi(x,y)\dlat\mu_{X\times Y}(x,y)$ where $\phi$ is a log-concave function. Assume that $\mu_X, \mu_Y$ are $\sigma$-finite.
Let $\lambda\in(0,1)$ and let $A,B\subset X\times Y$ be non-empty measurable sets such that
$(1-\lambda)A\star\lambda B$ is also measurable and so that
\begin{equation*}
\sup_{x\in X}\int_Y\chi_{_A}(x,y)\phi(x,y)\,\dlat\mu_Y=\sup_{x\in X}\int_Y\chi_{_B}(x,y)\phi(x,y)\,\dlat\mu_Y.
\end{equation*}
Then
\begin{equation*}
\mu\bigl((1-\lambda)A\star\lambda B\bigr)\geq(1-\lambda)\mu(A)+\lambda\,\mu(B).
\end{equation*}
\end{teor}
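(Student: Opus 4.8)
The plan is to run a Fubini-type slicing argument that reduces the inequality on $X\times Y$ to the linear inequality $\overline{\mathrm{BM}}(1)$ on the base $X$, using $\mathrm{PL}$ on the fibres $Y$ to control the fibre integrals. For a measurable set $S\subset X\times Y$ and $x\in X$ write $S_x=\{y\in Y:(x,y)\in S\}$ for its fibre, and define the fibre-mass function $m_S(x)=\int_Y\chi_S(x,y)\phi(x,y)\,\dlat\mu_Y$. By Tonelli (valid since $\mu_X,\mu_Y$ are $\sigma$-finite and the integrand is non-negative) one has $\mu(S)=\int_X m_S(x)\,\dlat\mu_X$. Setting $f=m_A$, $g=m_B$ and $h=m_C$ with $C=(1-\lambda)A\star\lambda B$, the hypothesis reads exactly $\sup_X f=\sup_X g=:M$, and the goal becomes $\int_X h\,\dlat\mu_X\ge(1-\lambda)\int_X f\,\dlat\mu_X+\lambda\int_X g\,\dlat\mu_X$.

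The first key step is a pointwise (in $x$) lower bound for $h$. Relying on the product structure of the combination $\star$ recorded earlier for the standard product distance — namely that $z_X\in(1-\lambda)\{x\}\star\lambda\{x'\}$ in $X$ and $z_Y\in(1-\lambda)\{y\}\star\lambda\{y'\}$ in $Y$ force $(z_X,z_Y)\in(1-\lambda)\{(x,y)\}\star\lambda\{(x',y')\}$ — I first check that $C_{z_X}\supset(1-\lambda)A_x\star\lambda B_{x'}$ whenever $z_X\in(1-\lambda)\{x\}\star\lambda\{x'\}$. Fixing such $x,x',z_X$ and putting $F(y)=\chi_{A_x}(y)\phi(x,y)$, $G(y')=\chi_{B_{x'}}(y')\phi(x',y')$ and $H(z_Y)=\chi_{C_{z_X}}(z_Y)\phi(z_X,z_Y)$, the log-concavity of $\phi$ together with the previous inclusion yields $H(z_Y)\ge F(y)^{1-\lambda}G(y')^{\lambda}$ for all $y,y'$ and all $z_Y\in(1-\lambda)\{y\}\star\lambda\{y'\}$ (the inequality being trivial when $y\notin A_x$ or $y'\notin B_{x'}$, since then the right-hand side vanishes). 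Applying $\mathrm{PL}$ on $Y$ and recognising $\int_Y H\,\dlat\mu_Y=h(z_X)$, $\int_Y F\,\dlat\mu_Y=f(x)$, $\int_Y G\,\dlat\mu_Y=g(x')$ gives the fibrewise bound
\[
h(z_X)\ge f(x)^{1-\lambda}g(x')^{\lambda}\qquad\text{whenever }z_X\in(1-\lambda)\{x\}\star\lambda\{x'\}.
\]

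The second key step turns this multiplicative fibrewise bound into the desired additive inequality on $X$ via a layer-cake argument, and this is exactly where the equal-sup hypothesis is spent. For $t>0$ put $A_t=\{f\ge t\}$ and $B_t=\{g\ge t\}$. If $x\in A_t$ and $x'\in B_t$ then, for any $z_X\in(1-\lambda)\{x\}\star\lambda\{x'\}$, the fibrewise bound gives $h(z_X)\ge t^{1-\lambda}t^{\lambda}=t$; hence $(1-\lambda)A_t\star\lambda B_t\subset\{h\ge t\}$. Because $\sup_X f=\sup_X g=M$, the sets $A_t$ and $B_t$ are simultaneously non-empty for every $t\in(0,M)$ and simultaneously empty for $t>M$, so applying $\overline{\mathrm{BM}}(1)$ on $X$ (the \emph{general} version is needed, as $A_t$ or $B_t$ may have zero $\mu_X$-measure) yields $\mu_X(\{h\ge t\})\ge(1-\lambda)\mu_X(A_t)+\lambda\mu_X(B_t)$ for a.e.\ $t>0$. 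Integrating in $t$ and using the Cavalieri identity $\int_X m_S\,\dlat\mu_X=\int_0^\infty\mu_X(\{m_S\ge t\})\,\dlat t$ for $S\in\{A,B,C\}$ converts this into $\mu(C)\ge(1-\lambda)\mu(A)+\lambda\mu(B)$, as desired.

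I expect the main obstacle to be the bookkeeping around emptiness of the level sets: without the hypothesis $\sup_X f=\sup_X g$ there is a range of thresholds $t$ on which one of $A_t,B_t$ is empty while the other is not, so that $(1-\lambda)A_t\star\lambda B_t=\emptyset$ and $\overline{\mathrm{BM}}(1)$ cannot be invoked to dominate the surviving term; the hypothesis is precisely what aligns the two non-emptiness ranges and thereby produces the linear (rather than merely multiplicative) conclusion. The remaining technical points — measurability of the fibres $S_x$ and of $m_S$, legitimacy of Tonelli, non-emptiness of the combinations of non-empty level sets, and measurability of the superlevel sets of $f,g,h$ — are routine consequences of the $\sigma$-finiteness assumptions together with the measurability of $A,B,C$ and $\phi$ and the geodesic (strictly intrinsic) structure of the distances.
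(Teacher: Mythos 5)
Your proof is correct and follows essentially the same route as the paper: the paper establishes Theorem~B as the case $p=0$ of the more general Theorem~\ref{t:BM_lin2_MetSpa}, whose proof uses exactly your slicing scheme --- the fibre-mass functions $F,G,H$ (your $m_A,m_B,m_C$), log-concavity of $\phi$ plus $\mathrm{PL}$ on $Y$ to get the multiplicative fibrewise bound, and then the superlevel-set inclusion combined with $\overline{\mathrm{BM}}(1)$ on $X$ and Cavalieri, with the equal-sup hypothesis spent in the same place to keep both level sets simultaneously non-empty. Your explicit remarks on why the \emph{general} inequality $\overline{\mathrm{BM}}(1)$ is needed and on the alignment of the non-emptiness ranges faithfully reflect the roles these play in the paper's argument.
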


The paper is organized as follows. Section \ref{s:background} is devoted to collecting some
definitions and preliminary considerations. In Section \ref{s:main} we prove Theorems \ref{t:BM_ProdMetSpa} and \ref{t:BM_lin1_MetSpa} (in fact, a more general version of the latter) as well as some related result in Theorem \ref{t:linBM_ProdMetSpa}. Next, in Section \ref{s:BM_Rn}, we particularize the results obtained in the precedent section to get some Brunn-Minkowski type inequalities for product measures on $\R^n$. Among others, we show Theorem \ref{t:BM_prod_quasi-conc} as well as we study its associated isoperimetric type inequality, Theorem \ref{t:isop}. Finally, in Section \ref{s:applications}, we present some other applications that can be derived from the results of Section \ref{s:main}.

\section{Background material and auxiliary results}\label{s:background}

Another equivalent version of the Brunn-Minkowski inequality that appears in the literature is the following:
\begin{equation}\label{e:min_BM}
\vol\bigl((1-\lambda)A+\lambda
B\bigr)\geq\min\{\vol(A),\,\vol(B)\}.
\end{equation}

The equivalent expressions of the Brunn-Minkowski inequality, \eqref{e:BM}, \eqref{e:mult_BM}, \eqref{e:min_BM}, can be rewritten in terms of the $p$-th mean of two non-negative numbers,
where $p$ is a parameter varying in $\R\cup\{\pm\infty\}$.
We recall this definition, for which we follow \cite{BL} (regarding a general reference for $p$-th means of non-negative numbers, we refer also to the classic text of Hardy, Littlewood and P\'olya \cite{HaLiPo}, as well as to the excellent handbook about means by Bullen \cite{Bu}).

Consider first the case $p\in\R$ and $p\ne0$; given $a,b\ge0$ such that $ab\ne0$ and $\lambda\in(0,1)$, we define
\[
M_p(a,b,\lambda)=((1-\lambda)a^p+\lambda b^p)^{1/p}.
\]
For $p=0$ we set
$$
M_0(a,b,\lambda)=a^{1-\lambda}b^\lambda
$$
and, to complete the picture, for $p=\pm \infty$ we define $M_\infty(a,b,\lambda)=\max\{a,b\}$ and
$M_{-\infty}(a,b,\lambda)=\min\{a,b\}$. Finally, if $ab=0$, we will define $M_p(a,b,\lambda)=0$ for all $p\in\R\cup\{\pm\infty\}$. Note that $M_p(a,b,\lambda)=0$, if $ab=0$, is redundant for all $p\leq0$, however it is relevant for $p>0$ (as we will briefly comment later on). Moreover, one can easily check that, for any $p\in\R\cup\{\pm\infty\}$, $M_p(a,b,\lambda)=\lim_{q\to p}M_q(a,b,\lambda)$. Furthermore, for $p\neq0$, we will allow that $a$, $b$ take the value $\infty$ and in that case, as usual, $M_p(a,b,\lambda)$ will be the value that is obtained ``by continuity''.

With this definition, inequalities \eqref{e:BM}, \eqref{e:mult_BM}, \eqref{e:min_BM} can be rephrased as
\begin{equation}\label{e:BM_mean}
\vol\bigl((1-\lambda)A+\lambda
B\bigr)\geq M_p\bigl(\vol(A),\vol(B),\lambda\bigr),
\end{equation}
where $p$ takes the values $1/n$, $0$ and $-\infty$, respectively. We notice that, in the case $p=1/n$, and with the above considered notation, we are only taking into account sets $A,B$ such that $\vol(A)\vol(B)>0$. This distinction is usual in the literature when dealing with Brunn-Minkowski type inequalities and we shall discuss about it later (cf. Proposition \ref{p:BMzeromeas_sets} and the precedent paragraph).
For this reason, we will maintain both notations, $M_p(a,b,\lambda)$ and $((1-\lambda)a^p+\lambda b^p)^{1/p}$, along the paper (when $p=0,\pm\infty$, the latter must be understood again as the values that are obtained ``by continuity'').

\smallskip

Here, we are interested in studying Brunn-Minkowski type inequalities in the context of a \emph{metric measure space}
(i.e., a set $X$ endowed with a distance $\di$ and a measure $\mu$). To this end, two ``elements'' playing a relevant role in the above family of inequalities \eqref{e:BM_mean} must be studied: the \emph{operation} (the Minkowski sum, $+$, in \eqref{e:BM_mean}) and the \emph{measure} (the Lebesgue measure, $\vol(\cdot)$, in \eqref{e:BM_mean}) together with the appropriate $p$-th mean according to the ``geometry'' of the space.

\subsection{The operation: the distance comes into play}\label{ss:distance}
Since we intend to study Brunn-Minkowski type inequalities in spaces without a linear structure, first we should interpret \eqref{e:lincomb}
in terms of the distance associated to the Euclidean norm $\enorm{\,\cdot\,}$.
To this aim, we notice that, given $x,y,z\in\R^n$ and $\lambda\in(0,1)$, the relation $z=(1-\lambda)x+\lambda y$ holds
if and only if
\begin{equation}\label{e:x,y,z}
\enorm{z-x}=\lambda\enorm{x-y}, \, \enorm{z-y}=(1-\lambda)\enorm{x-y}.
\end{equation}
In other words,
\begin{equation*}
\{(1-\lambda)x+\lambda y\}=\overline{B}\bigl(x,\lambda \enorm{x-y}\bigr)\cap\overline{B}\bigl(y,(1-\lambda)\enorm{x-y}\bigr)
\end{equation*}
(here $\overline{B}(a,r)$ denotes the ($n$-dimensional) closed Euclidean ball of radius $r$ centered at $a$, i.e., $\overline{B}(a,r)=a+r\bola$).
The above intersection is non-empty in any normed space $X$ (considering, as natural, balls with respect to the distance induced by the given norm) since $(1-\lambda)x+\lambda y$ (for $x,y\in X$ and $\lambda\in(0,1)$) always satisfies \eqref{e:x,y,z}. However, it does not necessarily contain a unique point: consider, for instance, $\R^n$ endowed with the infinity norm $\enorm{\,\cdot\,}_{\infty}$, or the Heisenberg group endowed with its standard Carnot-Carath\'eodory distance. This does not suppose a big inconvenient for us; the problem is when dealing with metric spaces for which the latter intersection may be empty. A trivial example of this would be $\Z^n$ endowed with the Euclidean metric of $\R^n$ restricted on $\Z^n$. Since this space is not \emph{path-connected} this obstacle cannot be overcome, i.e., there does not exist an equivalent distance satisfying the above non-empty intersection condition (for any pair of points and any $\lambda\in[0,1$]). An example of a path-connected space for which a given distance has also the above-mentioned peculiarity is the unit circle $\s^{1}$ in the plane with the restriction of (the distance given by) $\enorm{\,\cdot\,}$ to $\s^{1}$. Here the solution is to consider the \emph{intrinsic metric} on $\s^{1}$ associated to its \emph{length structure}.
These considerations lead us to the following definition (we recommend the reader to \cite[Chapter~2]{BuBuIv} for enlightening discussions on these topics).
\begin{definition}
Let $(X,\di)$ be a metric space. We will say that $\di$ is a strictly intrinsic distance if for any $x,y\in X$ the closed balls $\overline{B}_\di(x,r_1)$, $\overline{B}_\di(y,r_2)$ have a non-empty intersection provided that $r_1+r_2=\di(x,y)$.
\end{definition}
An interesting class of metric spaces with strictly intrinsic distances is the one of complete length spaces endowed with their associated distances. Following the proof of \cite[Theorem~2.4.16]{BuBuIv}, one may show that a metric space $(X,\di)$, where $\di$ is a strictly intrinsic distance, needs to be path-connected.

For the sake of brevity in our exposition, from now on a \emph{metric space} will mean a set $X$ provided with a strictly intrinsic distance $\di$.
Under this consideration, we may extend \eqref{e:lincomb} to the context of a metric space (cf. \eqref{e:x,y,z}).
\begin{definition}\label{d:lincomb_metric}
Let $(X,\di)$ be a metric space. If $\lambda\in(0,1)$ and $A,B$ are two non-empty subsets of $X$,
the ``$\di$-convex combination'' $(1-\lambda)A\star_\di\lambda B$ of $A$ and $B$ will be the non-empty set given by
\begin{equation}\label{e:lincomb_metric}
\begin{split}
(1-\lambda)A\star_\di\lambda B=\bigl\{z\in X:{}
 &\di(z,a)=\lambda\di(a,b),\\ & \di(z,b)=(1-\lambda)\di(a,b),
a\in A, b\in B\bigr\}.
\end{split}
\end{equation}
\end{definition}
This notion of $\di$-convex combination has already appeared in the literature.
It can be found in \cite[Definition~1.1]{Ju} and, in the setting of Riemannian manifolds, denominated as the `barycenter of two points', in \cite[p.~29]{Cor}.

When the distance is clear, the index of $\star_\di$ will be omitted and we shall write just $(1-\lambda)A\star\lambda B$ to denote $(1-\lambda)A\star_\di\lambda B$.

When the metric $\di$ is such that the intersection
\[\overline{B}_\di\bigl(x,\lambda\di(x,y)\bigr)\cap\overline{B}_\di\bigl(y,(1-\lambda)\di(x,y)\bigr)=\{z\}\]
(for any $x,y\in X$ and $\lambda\in(0,1)$),
it allows us to define an operation on $X$,
$+_\di:X\times X\times[0,1]\longrightarrow X$, given by $+_\di(x,y,\lambda)=z$. For simplicity we write $(1-\lambda)x+_\di\lambda y=+_\di(x,y,\lambda)$ and further, for $A, B\subset X$,
\[(1-\lambda)A+_\di\lambda B=\{(1-\lambda)a+_\di\lambda b:\,a\in A,\, b\in B\}=(1-\lambda)A\star_\di\lambda B.\]

An easy way to construct such a metric space endowed with the above-mentioned operation $+_\di$ is as follows.
\begin{definition}\label{d:+_d_assoc_to_phi}
Let $\phi:X\longrightarrow\R^n$ be an injective function on a set $X$. Then the function $\di:X\times X\longrightarrow\R_{\geq0}$ given by $\di(x,y)=\enorm{\phi(x)-\phi(y)}$ is a distance on $X$.
Moreover, assuming that $\di$ is strictly intrinsic (which implies here that, for any $x,y\in X$ and $\lambda\in(0,1)$, there exists $z\in X$ such that $\phi(z)=(1-\lambda)\phi(x)+\lambda\phi(y)$) the operation $+_\di$ is well-defined and we have
\begin{equation*}
(1-\lambda)x+_\di\lambda y=\phi^{-1}\bigl((1-\lambda)\phi(x)+\lambda\phi(y)\bigr).
\end{equation*}
\end{definition}

When working with product spaces, the $\di$-convex combination of two subsets (cf.~\eqref{e:lincomb_metric}) might not be clear since there is not a ``sole'' distance $\di$ associated to the product metric space. So, it is convenient to clarify which distance will be ``fixed'' for the product space.

To this aim, let $(X,\di_X)$ and $(Y,\di_Y)$ be metric spaces and consider the product space $X\times Y$.
One can define a distance $\di_{X\times Y}$ on $X\times Y$ (whose induced topology agrees with the product topology) as follows:
$$\di_{X\times Y}\bigl((x_1,x_2), (y_1,y_2)\bigr)=||(\di_X(x_1,y_1),\di_Y(x_2,y_2))||$$
where $||\cdot||$ is a norm in $\R^2$.
When taking the Euclidean norm $\enorm{\,\cdot\,}$, this distance is the so-called \emph{product metric}, which will be denoted from now on as $\rho$.

For our purposes (cf. \eqref{e:BM_mean}), amongst all the distances $\di$ on $X\times Y$, we are interested in considering the smallest possible $\di$-convex combination. First we notice that, from the definition of $\di_{X\times Y}$, and for any non-empty $A,B\subset X\times Y$ and $\lambda\in(0,1)$, we get
\begin{equation}\label{e: inclus_convcomb_distances}
(1-\lambda)A\star_{\di_{X\times Y}}\!\lambda B\supset(1-\lambda)A\star^\prime\lambda B,
\end{equation}
where $(1-\lambda)A\star^\prime\lambda B$ is the set given by
\begin{equation*}\label{e:operationXY_1}
\begin{split}
(1-\lambda)A\star^\prime\lambda B=\bigl\{z=&(z_1,z_2)\in X\times Y :{}
\di_X(z_1,a_1)=\lambda\di_X(a_1,b_1), \\
&\di_X(z_1,b_1)=(1-\lambda)\di_X(a_1,b_1),\\
&\di_Y(z_2,a_2)=\lambda\di_Y(a_2,b_2), \\
&\di_Y(z_2,b_2)=(1-\lambda)\di_Y(a_2,b_2),\\
&\quad\text{ for some } (a_1,a_2)\in A, (b_1,b_2)\in B\bigr\}.
\end{split}
\end{equation*}
The following result shows that, in a sense, the most ``convenient'' distance on $X\times Y$ (cf.
\eqref{e: inclus_convcomb_distances}) is the product metric $\rho$.

\begin{prop}\label{p:starProdSpac}
Let $(X,\di_X)$ and $(Y,\di_Y)$ be metric spaces and let $\rho$ be the product metric on $X\times Y$.
If $A,B$ are non-empty subsets of $X\times Y$ and $\lambda\in(0,1)$ then
$(1-\lambda)A\star_\rho\lambda B=(1-\lambda)A\star^\prime\lambda B$.
\end{prop}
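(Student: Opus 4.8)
The plan is to prove the two inclusions separately. The inclusion $(1-\lambda)A\star_\rho\lambda B\supset(1-\lambda)A\star^\prime\lambda B$ is already available from \eqref{e: inclus_convcomb_distances}, since $\rho$ is exactly $\di_{X\times Y}$ for the Euclidean choice of the norm $||\cdot||$; alternatively it follows by a one-line computation, which I would record for completeness. If $z=(z_1,z_2)$ satisfies the four componentwise equalities with respect to $a=(a_1,a_2)\in A$ and $b=(b_1,b_2)\in B$, then squaring and adding gives $\rho(z,a)^2=\di_X(z_1,a_1)^2+\di_Y(z_2,a_2)^2=\lambda^2\rho(a,b)^2$, and likewise $\rho(z,b)^2=(1-\lambda)^2\rho(a,b)^2$, which are precisely the defining conditions of $\star_\rho$. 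So I would dispatch this direction immediately and devote the work to the reverse inclusion.

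For the reverse inclusion, take $z=(z_1,z_2)\in(1-\lambda)A\star_\rho\lambda B$, so that there are $a=(a_1,a_2)\in A$ and $b=(b_1,b_2)\in B$ with $\rho(z,a)=\lambda\rho(a,b)$ and $\rho(z,b)=(1-\lambda)\rho(a,b)$. I would abbreviate $\alpha_1=\di_X(z_1,a_1)$, $\alpha_2=\di_X(z_1,b_1)$, $\beta_1=\di_Y(z_2,a_2)$, $\beta_2=\di_Y(z_2,b_2)$, $d_X=\di_X(a_1,b_1)$, $d_Y=\di_Y(a_2,b_2)$, and introduce the planar vectors $u=(\alpha_1,\beta_1)$ and $v=(\alpha_2,\beta_2)$ with non-negative coordinates. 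The hypothesis reads $\enorm{u}=\lambda\rho(a,b)$ and $\enorm{v}=(1-\lambda)\rho(a,b)$, so that $\enorm{u}+\enorm{v}=\rho(a,b)=\enorm{(d_X,d_Y)}$; in other words $z$ realizes equality in the triangle inequality between $a$ and $b$.

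The crux of the argument is to chain this equality with two triangle inequalities. By the triangle inequality in $X$ and in $Y$ one has $\alpha_1+\alpha_2\ge d_X$ and $\beta_1+\beta_2\ge d_Y$, and since all quantities are non-negative this yields $\enorm{u+v}=\sqrt{(\alpha_1+\alpha_2)^2+(\beta_1+\beta_2)^2}\ge\enorm{(d_X,d_Y)}$. Combining with Minkowski's inequality $\enorm{u}+\enorm{v}\ge\enorm{u+v}$ in $\R^2$ and the identity $\enorm{u}+\enorm{v}=\enorm{(d_X,d_Y)}$ forces equality throughout. The essential point is then the \emph{equality case}: because the Euclidean norm is strictly convex, $\enorm{u}+\enorm{v}=\enorm{u+v}$ forces $u$ and $v$ to be non-negatively proportional, while equality in the monotone step forces $\alpha_1+\alpha_2=d_X$ and $\beta_1+\beta_2=d_Y$, i.e.\ $u+v=(d_X,d_Y)$. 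I then read off the four equalities: if $\rho(a,b)=0$ then $d_X=d_Y=0$, whence $u=v=0$ and everything holds trivially; otherwise $u,v\neq 0$, and proportionality together with $\enorm{u}=\lambda\rho(a,b)$ gives $u=\dfrac{\enorm{u}}{\enorm{u+v}}(u+v)=\lambda(d_X,d_Y)$ and hence $v=(1-\lambda)(d_X,d_Y)$. Unpacking, $\alpha_1=\lambda d_X$, $\alpha_2=(1-\lambda)d_X$, $\beta_1=\lambda d_Y$, $\beta_2=(1-\lambda)d_Y$, which are exactly the conditions defining $(1-\lambda)A\star^\prime\lambda B$ for this pair $a,b$, so $z\in(1-\lambda)A\star^\prime\lambda B$.

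I expect the only delicate point to be this equality-case analysis, and it is here that the choice of norm matters: it is crucial that $\rho$ is built from the \emph{strictly convex} Euclidean norm, for otherwise equality in $\enorm{u}+\enorm{v}=\enorm{u+v}$ need not entail proportionality and the componentwise conditions could fail. This is precisely what singles out $\rho$ as the ``convenient'' product distance emphasized before the statement.
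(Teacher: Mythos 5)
Your proof is correct and follows essentially the same route as the paper's: the paper likewise reduces to the reverse inclusion, forms the chain $\enorm{(d_X,d_Y)}\leq\enorm{u+v}\leq\enorm{u}+\enorm{v}=\enorm{(d_X,d_Y)}$ from the componentwise triangle inequalities and Minkowski's inequality in $\R^2$, and then extracts the four componentwise equalities from the equality cases (proportionality via strict convexity of the Euclidean norm plus $u+v=(d_X,d_Y)$). Your explicit treatment of the degenerate case $\rho(a,b)=0$ is a small point the paper leaves implicit, but it is not a difference of method.
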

\begin{proof}
For the sake of simplicity, we will write $\di_1=\di_X$, $\di_2=\di_Y$. If we denote by
$A_i=\di_i(a_i,b_i)$, $B_i=\di_i(z_i,a_i)$, $C_i=\di_i(z_i,b_i)$, for $i=1,2$, and we put
$A=(A_1,A_2)$, $B=(B_1,B_2)$, $C=(C_1,C_2) \in\R_{\geq0}^2$, we have that the defining conditions
of $\star_\rho$ are
\begin{equation}\label{e:cond1_2expres_oper}
\enorm{B}_2=\lambda\enorm{A}_2, \quad \enorm{C}_2=(1-\lambda)\enorm{A}_2.
\end{equation}
From the triangle inequality (for $\di_i$) we get
\begin{equation}\label{e:cond2_2expres_oper}
A_i\leq B_i+C_i, \, \text{ for } i=1,2.
\end{equation}
By \eqref{e:cond2_2expres_oper}, the triangle inequality (for $\enorm{\,\cdot\,}_2$) and \eqref{e:cond1_2expres_oper}, respectively, we have
\begin{equation*}
\enorm{A}_2\leq \enorm{B+C}_2\leq\enorm{B}_2+\enorm{C}_2=\enorm{A}_2 .
\end{equation*}
Thus, from the equality case of the triangle inequality for $\enorm{\,\cdot\,}_2$, we obtain $C=rB$ where, by \eqref{e:cond1_2expres_oper}, $r=\frac{1-\lambda}{\lambda}$. Finally, from the equality case of \eqref{e:cond2_2expres_oper}, we get
\begin{equation*}
A_i = B_i+C_i=(1+r)B_i=\frac{1}{\lambda}B_i, \, \text{ for } i=1,2.
\end{equation*}
This, together with the already obtained $C=rB$, implies that $B_i=\lambda A_i$ and $C_i=(1-\lambda)A_i$, $i=1,2$.
It finishes the proof.
\end{proof}
For simplicity, from now on (unless we say explicitly the opposite), when working with product metric spaces, $\star$ will stand for $\star_\rho$ (i.e., $\star=\star_\rho=\star^\prime$).

\subsection{The role of the measure}
As we have commented before, we intend to study Brunn-Minkowski type inequalities. Even considering $\R^n$ endowed with the classical Minkowski addition $+$, it would be naive to try to generalize \eqref{e:BM_mean} when replacing the volume with an arbitrary measure $\mu$. Indeed, if we consider $\R$ provided with the measure $\mu$ given by $\dlat\mu(x)=x^2\,\dlat x$ and we take $A=[1,2]$, $B=-A$, then we clearly have $\mu(A)=\mu(B)>\mu([-1/2,1/2])=\mu\bigl((A+B)/2\bigr)$, which prevents any possible Brunn-Minkowski type inequality (cf.~\eqref{e:BM_mean}).
Other examples of spaces where certain Brunn-Minkowsi type inequalities do not hold in general (although the multiplicative Brunn-Minkowski inequality is immediately fulfilled (by Theorem \ref{t:PrekopaLeindler})) are related to the so-called \emph{log-concave measures}.

Thus, since there arise some pathologies even when considering $\R^n$ with the (Minkowski addition and the) most important probability measure in $\R^n$, in order to derive a certain Brunn-Minkowski inequality with respect to a fixed mean $M_p$ (cf.~\eqref{e:BM_mean}), we should either impose some conditions on the measure or consider certain subfamilies of sets (or sometimes, as commented above, both of them).

\smallskip

For the sake of completeness, we collect some definitions and results that to be used throughout our discussion.
\begin{definition}
A measure space is a pair $(X,\mu)$ (that is, we omit for simplicity the $\sigma$-algebra $\Sigma$ which contains, by assumption, the $\sigma$-algebra of all Borel sets in $X$), where $\mu$ is a measure on the metric space $(X,\di)$ that is assumed to be \emph{complete}. Such a measure is said to
be \emph{Borel}, whereas the triple $(X,\di,\mu)$ is called a metric measure space.

When considering the product measure space associated to the measure spaces
$(X,\mu_{X})$ and $(Y,\mu_{Y})$, we often denote by $\mu_{X\times Y}$, instead of $\mu_X\times\mu_Y$, the completion of the usual product measure of $\mu_X$ and $\mu_Y$. Moreover, in this case, $(X,\mu_{X})$ and $(Y,\mu_{Y})$ are assumed to be measure spaces so that $\mu_{X\times Y}$ is also Borel.
\end{definition}

Regarding the measurability of the sets, it is worth noting that the assumption that $A$ and $B$ are measurable
is not sufficient to guarantee that $(1-\lambda) A\star\lambda B$ is measurable, as happens with the Lebesgue measure
in relation to the Minkowski addition (see \cite[Section 10]{G}).

The following results are classical and can be found in any book on measure theory (e.g. \cite{Coh}).
\begin{proposition}\label{p:meas_union_inters}
Let $(X,\mu)$ be a measure space.
If $(A_n)_{n\in\N}$ is a decreasing sequence of $\mu$-measurable sets such that $\mu(A_n)<+\infty$ for some $n\in\N$, then \[\mu\left(\bigcap_{n=1}^{+\infty} A_n\right)=\lim_n \mu(A_n).\]
\end{proposition}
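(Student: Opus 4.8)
The plan is to reduce the statement to an application of countable additivity via the standard complement trick. Write $A=\bigcap_{n=1}^{+\infty}A_n$ for the limit set, and let $N\in\N$ be an index for which $\mu(A_N)<+\infty$; such an $N$ exists by hypothesis. Since $(A_n)_{n\in\N}$ is decreasing, one has $\mu(A_n)\leq\mu(A_N)<+\infty$ for every $n\geq N$, and moreover $\bigcap_{n\geq N}A_n=\bigcap_{n\geq 1}A_n=A$, so discarding the first $N-1$ terms changes neither side of the claimed identity. Thus I may assume from the outset that $\mu(A_N)<+\infty$ and work with the tail $(A_n)_{n\geq N}$.

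The key step is to pass from the decreasing sequence $(A_n)$ to an increasing one by taking complements inside the finite-measure set $A_N$. Define $B_n=A_N\setminus A_n$ for $n\geq N$. Because $(A_n)_{n\geq N}$ is decreasing, the sets $B_n$ form an increasing sequence, and their union is
\[
\bigcup_{n\geq N}B_n=\bigcup_{n\geq N}(A_N\setminus A_n)=A_N\setminus\bigcap_{n\geq N}A_n=A_N\setminus A,
\]
where the middle equality is just De Morgan's law. Continuity from below for increasing sequences of measurable sets (which follows directly from countable additivity, by expressing the union as a countable disjoint union of the successive differences) then yields
\[
\mu(A_N\setminus A)=\lim_n\mu(B_n)=\lim_n\mu(A_N\setminus A_n).
\]

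Finally I would exploit the finiteness of $\mu(A_N)$ to convert these complement-measures into differences. Since $A\subset A_N$ and $A_n\subset A_N$ with $\mu(A_N)<+\infty$, one has $\mu(A_N\setminus A)=\mu(A_N)-\mu(A)$ and $\mu(A_N\setminus A_n)=\mu(A_N)-\mu(A_n)$. Substituting these into the displayed limit gives $\mu(A_N)-\mu(A)=\mu(A_N)-\lim_n\mu(A_n)$, and cancelling the finite quantity $\mu(A_N)$ from both sides leaves $\mu(A)=\lim_n\mu(A_n)$, as desired. The only delicate point, and the reason the hypothesis cannot be dropped, is precisely this cancellation: the subtraction of measures is legitimate only because $\mu(A_N)<+\infty$, and indeed the statement fails without some finiteness assumption, as the example $A_n=[n,+\infty)$ under Lebesgue measure shows, where every $\mu(A_n)=+\infty$ yet $\bigcap_n A_n=\emptyset$ has measure zero.
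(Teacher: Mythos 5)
Your proof is correct. The paper offers no argument of its own for this proposition---it is stated as a classical fact with a citation to a standard measure theory text (Cohn)---and your complement trick inside the finite-measure set $A_N$, combined with continuity from below and cancellation of the finite quantity $\mu(A_N)$, is exactly the standard textbook proof that the citation points to, including the correct observation (with the example $A_n=[n,+\infty)$) that the finiteness hypothesis is indispensable.
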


\begin{theorem}[Fubini-Tonelli's theorem]\label{t:Fubini}
Let $(X,\mu_X)$, $(Y,\mu_Y)$ be $\sigma$-finite measure spaces and let $f:X\times Y\longrightarrow\R_{\geq0}$ be a non-negative measurable function. Then
\begin{equation*}
\begin{split}
\int_{X\times Y} f(x,y)\,\dlat \mu_{X\times Y}(x,y)
&=\int_X\left(\int_Y f(x,y)\,\dlat \mu_Y(y)\right)\,\dlat \mu_X(x)\\
&=\int_Y\left(\int_X f(x,y)\,\dlat \mu_X(x)\right)\,\dlat \mu_Y(y).
\end{split}
\end{equation*}
Therefore, for any non-negative measurable function $h:X\longrightarrow\R_{\geq0}$, Cavalieri's Principle
\begin{equation*}
\int_X \,h(x)\,\dlat \mu_X(x)
= \int_0^{+\infty}\mu_X\bigl(\{x\in X: h(x)\geq t\}\bigr)\, \dlat t
\end{equation*}
holds.
\end{theorem}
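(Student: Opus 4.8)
The final statement is the Fubini--Tonelli theorem, which the paper (rightly) imports as classical; so the honest ``how I would prove it'' is the standard measure-theoretic bootstrap, followed by the short deduction of Cavalieri's Principle, which is the ``Therefore'' clause. The plan is to establish the iterated-integral identity by the usual four-step approximation and then obtain Cavalieri's Principle as a one-line corollary by applying that identity to an auxiliary product space.

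First I would prove the iterated-integral identity in four steps. \emph{Step one:} verify it for $f=\chi_{E\times F}$ with $E\subset X$, $F\subset Y$ measurable, where it collapses to $\mu_{X\times Y}(E\times F)=\mu_X(E)\,\mu_Y(F)$, the defining property of the product measure. \emph{Step two:} pass to $f=\chi_S$ for arbitrary measurable $S\subset X\times Y$; the crux here is that for each fixed $x$ the section $S_x=\{y:(x,y)\in S\}$ is $\mu_Y$-measurable and that $x\mapsto\mu_Y(S_x)$ is $\mu_X$-measurable. I would establish this via a monotone-class ($\pi$--$\lambda$) argument: the family of $S$ for which the conclusion holds is a $\lambda$-system containing the $\pi$-system of measurable rectangles, hence contains the whole product $\sigma$-algebra. \emph{Step three:} extend to non-negative simple functions by linearity. \emph{Step four:} extend to a general non-negative measurable $f$ by writing it as an increasing limit of simple functions and invoking monotone convergence in each of the three integrals in turn.

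One genuinely delicate point is the completion: since $\mu_{X\times Y}$ is defined as the completion of $\mu_X\times\mu_Y$, a $\mu_{X\times Y}$-measurable $f$ need not be measurable for the uncompleted product $\sigma$-algebra, so its sections $f(x,\cdot)$ may fail to be measurable for $x$ in some $\mu_X$-null set. The standard fix I would adopt is to sandwich $f$ between two genuinely product-measurable functions that agree with it $\mu_{X\times Y}$-almost everywhere, apply the uncompleted statement to the sandwich, and note that the almost-everywhere ambiguity only affects the integrals on null sets. For Cavalieri's Principle I would then apply the identity just proved to the product of $(X,\mu_X)$ with $\R_{\geq0}$ carrying Lebesgue measure, using the subgraph $E=\{(x,t)\in X\times\R_{\geq0}:0\le t\le h(x)\}$, which is measurable because $(x,t)\mapsto h(x)-t$ is. Integrating $\chi_E$ first in $t$ gives $\int_X\bigl(\int_0^{+\infty}\chi_{[0,h(x)]}(t)\,\dlat t\bigr)\,\dlat\mu_X(x)=\int_X h(x)\,\dlat\mu_X(x)$, while integrating first in $x$ gives $\int_0^{+\infty}\mu_X\bigl(\{x\in X:h(x)\ge t\}\bigr)\,\dlat t$, since the $t$-section of $E$ is exactly the superlevel set $\{x:h(x)\ge t\}$; equating the two iterated integrals yields the formula.

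The part I expect to demand the most care is Step two, the joint measurability of the sections together with the role of $\sigma$-finiteness: the hypothesis cannot be dropped, as the diagonal in $[0,1]\times[0,1]$ with Lebesgue measure against counting measure shows, and it is precisely in the monotone-class closure under increasing limits (after truncating to sets of finite measure) that $\sigma$-finiteness is used. Everything after that --- the simple-function and monotone-convergence steps and the Cavalieri computation --- is bookkeeping once measurability is secured.
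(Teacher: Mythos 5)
Your proposal is correct, and since the paper does not prove this statement at all --- it records Fubini--Tonelli as classical and simply cites Cohn's measure theory text --- your argument is precisely the standard proof contained in the reference the paper defers to: the four-step bootstrap (rectangles, monotone-class argument for sections of arbitrary measurable sets, simple functions, monotone convergence), the sandwich handling of the completion $\mu_{X\times Y}$, and the derivation of Cavalieri's Principle by applying the identity to the closed subgraph $\{(x,t): 0\le t\le h(x)\}$ in $X\times\R_{\geq0}$, whose $t$-sections are exactly the superlevel sets $\{x: h(x)\geq t\}$ appearing in the paper's statement. Your flagged delicate points (the completion issue and the role of $\sigma$-finiteness, with the diagonal counterexample) are the genuinely delicate points, so there is nothing to correct.
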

In the following, when dealing with product spaces, we use the following notation:
given $C\subset X\times Y$, for any $t\in X$ we denote by
\[C(t)=\{y\in Y: (t,y)\in C\}.\]
\begin{definition}
Let $(X,\di,\mu)$ be a metric measure space.
We recall that $\mu$ is said to be \emph{locally finite} if for every point $x\in X$ there exists $r_x>0$ such that $\mu\bigl(B_\di(x,r_x)\bigr)<\infty$.
We recall that $\mu$ is \emph{inner regular} if the measure of any set can be approximated from within by compact subsets of $X$: if for any measurable set $A$
\[\mu(A)=\sup\{\mu(K):\,K\subset A, K \text{ compact}\}\]
holds.
If $\mu$ is both inner regular and locally finite, it is called a \emph{Radon} measure.
Finally, $\mu$ is said to be \emph{strictly positive} if $\mu\bigl(B_\di(x,r)\bigr)>0$ for all $r>0$ and any $x\in X$.
In the same way, we say that $\mu$ is strictly positive around $x\in X$, if $\mu\bigl(B_\di(x,r)\bigr)>0$ for all $r>0$.
\end{definition}

Regarding Definition \ref{d:1}, we note that
$(X,\di,\mu)$ satisfies the Brunn-Minkowski inequality BM($p$) if
\begin{equation*}
\mu(C)\geq M_p\bigl(\mu(A),\mu(B),\lambda\bigr)
\end{equation*}
holds for all $\lambda\in(0,1)$ and any measurable sets $A, B, C$ such that $C\supset(1-\lambda)A\star\lambda B$.

We notice that for all $p\leq0$, a space satisfies BM($p$) if and only if it also satisfies $\overline{\mathrm{BM}}$($p$). The following result shows that when dealing with some ``special'' spaces, both notions are also equivalent for $p>0$.
Before showing it, we would like to point out that this property is trivially fulfilled (for $p=1/n$) in $\R^n$ endowed with the Lebesgue measure $\vol(\cdot)$ (cf.~\eqref{e:BM}), as an easy consequence of both the translation invariance and the homogeneity (of degree $n$) of the volume. However, when considering a different measure $\mu$, in principle, the case $\mu(A)\mu(B)=0$ cannot be easily obtained.

\begin{prop}\label{p:BMzeromeas_sets}
Let $(X,\di,\mu)$ be a metric measure space, where $(X,\di)$ is locally compact and $\mu$ is a strictly positive Radon measure. If $(X,\di,\mu)$ satisfies $\mathrm{BM}(p)$ (with respect to the parameter $p>0$) then it satisfies $\overline{\mathrm{BM}}(p)$.
\end{prop}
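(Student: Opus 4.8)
The plan is to deduce $\overline{\mathrm{BM}}(p)$ from $\mathrm{BM}(p)$ by a limiting argument, since the two notions differ only when $\mu(A)\mu(B)=0$. Fix $\lambda\in(0,1)$ and non-empty measurable sets $A,B,C$ with $C\supset(1-\lambda)A\star\lambda B$. If $\mu(A)=\mu(B)=0$ then, as $p>0$, the right-hand side $\bigl((1-\lambda)\mu(A)^p+\lambda\mu(B)^p\bigr)^{1/p}$ vanishes and there is nothing to prove; so, by symmetry, I may assume $\mu(A)=0<\mu(B)$, in which case the claim reduces to $\mu(C)\ge\lambda^{1/p}\mu(B)$. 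Using that $\mu$ is inner regular, I would further reduce to compact subsets of $B$: it suffices to show $\mu(C)\ge\lambda^{1/p}\mu(K)$ for every compact $K\subset B$ with $\mu(K)>0$ and then take the supremum over such $K$ (this also covers the case $\mu(B)=\infty$, where it forces $\mu(C)=\infty$).

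The core of the argument is to recover the ``homothety'' estimate for the measure-zero set $\{a_0\}$, $a_0\in A$, as a limit of genuine $\mathrm{BM}(p)$ instances. Pick a decreasing sequence $r_n\downarrow0$; by local compactness the closed balls $\overline{B}_\di(a_0,r_n)$ are compact for $n$ large, and by strict positivity together with local finiteness they have positive and finite measure. Set $C_n=(1-\lambda)\overline{B}_\di(a_0,r_n)\star\lambda K$. Using the continuity of $\di$ and the compactness of both $\overline{B}_\di(a_0,r_n)$ and $K$, I would check that each $C_n$ is closed (hence measurable), that the sequence $(C_n)_n$ is decreasing, and that $\bigcap_n C_n=(1-\lambda)\{a_0\}\star\lambda K=:S$; the inclusion $\bigcap_n C_n\subset S$ is where compactness of $K$ enters, allowing one to extract from the defining witnesses $a^{(n)}\to a_0$ a convergent subsequence $b^{(n)}\to b\in K$ and pass to the limit in the two distance identities of Definition~\ref{d:lincomb_metric}. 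In particular $S$ is measurable and, since $a_0\in A$ and $K\subset B$, we have $S\subset(1-\lambda)A\star\lambda B\subset C$.

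Finally, I would apply $\mathrm{BM}(p)$ to the pair $\overline{B}_\di(a_0,r_n)$, $K$ with the measurable container $C_n$, which is legitimate because $\mu(\overline{B}_\di(a_0,r_n))\mu(K)>0$ and $C_n\supset(1-\lambda)\overline{B}_\di(a_0,r_n)\star\lambda K$; this yields $\mu(C_n)\ge\bigl((1-\lambda)\mu(\overline{B}_\di(a_0,r_n))^p+\lambda\mu(K)^p\bigr)^{1/p}$. Continuity of the measure from above (Proposition~\ref{p:meas_union_inters}, applicable once $\mu(C_n)<\infty$) gives $\mu(S)=\lim_n\mu(C_n)$, while $\overline{B}_\di(a_0,r_n)\downarrow\{a_0\}$ forces $\mu(\overline{B}_\di(a_0,r_n))\to\mu(\{a_0\})\le\mu(A)=0$; letting $n\to\infty$ and using $p>0$ then gives $\mu(S)\ge\lambda^{1/p}\mu(K)$. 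Since $S\subset C$ and $S$ is measurable, $\mu(C)\ge\mu(S)\ge\lambda^{1/p}\mu(K)$, and taking the supremum over $K$ finishes the proof. The main obstacle I anticipate is the bookkeeping on the auxiliary sets $C_n$: their closedness is routine, but guaranteeing that they (and the balls) have \emph{finite} measure, so that Proposition~\ref{p:meas_union_inters} applies, rests on the fact that in a locally compact strictly intrinsic space closed bounded sets are compact, whence the local finiteness of the Radon measure $\mu$ can be invoked; the other delicate point is the identity $\bigcap_n C_n=S$, whose nontrivial inclusion genuinely uses the compactness of $K$.
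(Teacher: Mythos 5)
Your proof is correct and takes essentially the same route as the paper's: there the authors fix $b_0$ in the null set, apply $\mathrm{BM}(p)$ to $A$ and $B_n=B\cup\overline{B}_\di(b_0,1/n)$, and identify $\bigcap_n\bigl((1-\lambda)A\star\lambda B_n\bigr)$ with $(1-\lambda)A\star\lambda B$ via the same compactness extraction and continuity-from-above argument you describe. Your only variation --- collapsing the null side to a single point $a_0\in A$ and inflating it to balls $\overline{B}_\di(a_0,r_n)$, which slightly streamlines the intersection identity since the witnesses $a^{(n)}$ converge to $a_0$ automatically --- is cosmetic, as the three essential ingredients (strict positivity and local finiteness of small compact balls, Proposition~\ref{p:meas_union_inters}, and compactness of the positive-measure set to pass to the limit in the two distance identities of Definition~\ref{d:lincomb_metric}) coincide with the paper's, including the shared gloss on compactness of the $\star$-combination that you rightly flag.
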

\begin{proof}
Since $\mu$ is Radon then it is inner regular and thus it is enough to show the statement of this result for arbitrary non-empty compact sets $A$ and $B$.
Moreover, we will consider the case in which one of the sets, say $B$, has measure zero whereas the other one, $A$, has positive measure (it is immediate otherwise).

Let $b_0\in B$ and consider, for any $n\in\N$, $B_n=B\cup \overline{B}_\di\bigl(b_0,1/n\bigr)$.
On the one hand, $(B_n)_{n\in\N}$ is clearly a decreasing sequence and
\[\bigcap_{n=1}^{+\infty} B_n=B\cup\bigcap_{n=1}^{+\infty}\overline{B}_\di\left(b_0,\frac{1}{n}\right)=B\cup\{b_0\}=B.\]
Moreover, since $X$ is locally compact, we may assume, without loss of generality, that $\overline{B}_\di\bigl(b_0,\frac{1}{n}\bigr)$ is compact (and then also $B_n$) for all $n\in\N$. Hence
$(1-\lambda)A\star\lambda B_n$ is compact and thus $\mu\bigl((1-\lambda)A\star\lambda B_n\bigr)<+\infty$ because
$\mu$ is locally finite. So, from Proposition \ref{p:meas_union_inters}, we have
\[\lim_n\mu\bigl((1-\lambda)A\star\lambda B_n\bigr)=\mu\left(\bigcap_{n=1}^{+\infty}\bigl((1-\lambda)A\star\lambda B_n\bigr)\right).\]

On the other hand, we may assume that $\mu\left(\overline{B}_\di\bigl(b_0,1/n\bigr)\right)<+\infty$ for all $n\in\N$ because $\mu$ is locally finite, and thus the same holds for $\mu(B_n)$.
Furthermore, since $\mu$ is strictly positive, $\mu\left(\overline{B}_\di\bigl(b_0,\frac{1}{n}\bigr)\right)>0$ and hence $\mu(B_n)>0$. Therefore, by hypothesis,
\begin{equation*}
\mu\bigl((1-\lambda)A\star\lambda B_n\bigr)\geq\bigl((1-\lambda)\mu(A)^p+\lambda\mu(B_n)^p\bigr)^{1/p},
\end{equation*}
and taking limits on both sides (and using Proposition \ref{p:meas_union_inters}) we get
\begin{equation*}
\lim_n\mu\bigl((1-\lambda)A\star\lambda B_n\bigr)\geq\bigl((1-\lambda)\mu(A)^p+\lambda\mu(B)^p\bigr)^{1/p}=(1-\lambda)^{1/p}\mu(A).
\end{equation*}

So, we must just check that
$\bigcap_{n\in\N}\bigl((1-\lambda)A\star\lambda B_n\bigr)\subset(1-\lambda)A\star\lambda B$ (furthermore, we have equality since the reverse inclusion is trivially fulfilled).
To this aim, let $z\in\bigcap_{n\in\N}\bigl((1-\lambda)A\star\lambda B_n\bigr)$. Then, for each $n\in\N$, there exist
$a_n\in A$, $b_n\in B_n$ such that
\begin{equation}\label{e: cond_z_an_bn}
\di(z,a_n)=\lambda\di(a_n,b_n), \qquad \di(z,b_n)=(1-\lambda)\di(a_n,b_n).
\end{equation}

Since $(B_n)_n$ is decreasing and $X$ is locally compact, there exists $n_0\in\N$ such that $B_n$ is compact for $n\ge n_0$. As $A$ is compact we may assume, taking subsequences if necessary, that $(a_n)_n$, $(b_n)_n$ are convergent, with limits $a\in A$ and $b\in B_{n_0}$, respectively. Repeating the argument with the tails of the sequence $(b_n)_n$, we get that $b\in B_n$ for all $n\ge n_0$ and hence $b\in B$. Now, taking limits on \eqref{e: cond_z_an_bn}, we may assert that $z\in(1-\lambda)A\star\lambda B$. This concludes the proof.
\end{proof}

Now we collect some examples of spaces or families of sets satisfying linear Brunn-Minkowski inequalities, since these spaces/families will be very useful along this paper (cf.~Theorems \ref{t:linBM_ProdMetSpa} and \ref{t:BM_ProdMetSpa}).
\begin{example}\label{ex:1}
Let $X=\alpha(I)$ where $I=[0,L]\subset\R$ (for some $L>0$) and $\alpha:I\longrightarrow\R^n$ is an injective
function. Let $\di$ be the distance defined, as in Definition \ref{d:+_d_assoc_to_phi}, via $\alpha^{-1}$ by the equality $\di(x,y)=\enorm{\alpha^{-1}(x)-\alpha^{-1}(y)}$. Let $\mu$ be the pushforward of the one-dimensional Lebesgue measure $\vol_1$ on $\R$.
Then, by the Brunn-Minkowski inequality in $\R$ (cf.~\eqref{e:BM}), we have
\begin{equation*}
\begin{split}
\mu((1-\lambda)A\star\lambda B)
&=\mu\bigl(\alpha\bigl((1-\lambda)\alpha^{-1}(A)+\lambda\alpha^{-1}(B)\bigr)\bigr)\\
&=\vol_1\bigl((1-\lambda)\alpha^{-1}(A)+\lambda\alpha^{-1}(B)\bigr)\\
&\geq (1-\lambda)\vol_1\bigl(\alpha^{-1}(A)\bigr)+\lambda\vol_1\bigl(\alpha^{-1}(B)\bigr)\\
&=(1-\lambda)\mu(A)+\lambda\mu(B).
\end{split}
\end{equation*}
\end{example}

The following linear version of the Brunn-Minkowski inequality, due to Bonnesen, can be found in the literature (e.g.~{\cite{BF,Gi,Oh}}).
From now on, $\L^n_{n-1}$ will be the set of vectorial hyperplanes of $\R^n$, the orthogonal projection of a set $A$ onto $H$ will be denoted by $A|H$, and by a convex body we will mean a (non-empty) compact convex set.
\begin{example}\label{ex:1.2}
Let $K,L\subset\R^n$ be convex bodies for which there exists
$H\in\L^n_{n-1}$ such that either $\vol_{n-1}(K|H)=\vol_{n-1}(L|H)$ or
$$\quad \max_{x\in H^\perp}\vol_{n-1}\bigl(K\cap(x+H)\bigr)=\max_{x\in
H^\perp}\vol_{n-1}\bigl(L\cap(x+H)\bigr).$$
Then, for all
$\lambda\in(0,1)$,
\begin{equation*}\label{e:BMproye_vol}
\vol\bigl((1-\lambda)K+\lambda L\bigr)
\geq(1-\lambda)\vol(K)+\lambda\vol(L).
\end{equation*}
\end{example}

We notice that a positively decreasing function $\phi:\R\longrightarrow\R_{\geq0}$ is quasi-concave and furthermore $\phi(0)=\sup_{x\in\R}\phi(x)$. Thus, as a consequence of \cite[Theorem~4.1]{CoSGYN} we get the following example, which will play a relevant role along this paper.
\begin{example}\label{ex:2}
Let $\mu$ be the measure on $\R$ given by $\dlat\mu(x)=\phi(x)\dlat x$, where $\phi:\R\longrightarrow\R_{\geq0}$ is positively decreasing. Let $\lambda\in(0,1)$ and let $A, B\subset\R$ be measurable sets \emph{containing the origin} and such that $(1-\lambda)A+\lambda B$ is also measurable.
Then $\overline{\mathrm{BM}}(1)$ holds:
\begin{equation*}
\mu((1-\lambda)A+\lambda B)\geq(1-\lambda)\mu(A)+\lambda\mu(B).
\end{equation*}
We recall that weakly unconditional sets in $\R$ are those containing the origin. Hence $\overline{\mathrm{BM}}(1)$ holds for weakly unconditional sets. On the other hand, if $\mu$ is a strictly positive finite measure, taking $A=[-1,1]$, and $B_i=[i,i+1]$ for all $i\in\N$, we have $\lim_i\mu((1-\lambda)A+\lambda B_i)=\lim_i\mu(B_i)=0$, and we conclude that $\overline{\mathrm{BM}}(1)$ cannot hold for $A$ and $B_i$ when $i$ is large. This shows the necessity  of taking weakly unconditional sets to ensure the validity of $\overline{\mathrm{BM}}(1)$.
\end{example}
The following result shows that, when dealing with measures $\mu$ on the real line given by $\dlat\mu(x)=\phi(x)\dlat x$, where $\phi$ is a \emph{continuous} function, the converse is also true. That is, the fact that the linear Brunn-Minkowski inequality holds for any pair of measurable sets containing the origin characterizes the ``nature'' of $\phi$.
\begin{prop}\label{p:linBMimplies_quasi}
Let $\mu$ be the measure on $\R$ given by $\dlat\mu(x)=\phi(x)\dlat x$, where $\phi$ is a (non-negative) continuous function.
If
\begin{equation}\label{e:linBMimplies_quasi}
\mu((1-\lambda)A+\lambda B)\geq(1-\lambda)\mu(A)+\lambda\mu(B)
\end{equation}
holds for any measurable sets $A, B\subset\R$ containing the origin and all $\lambda\in(0,1)$, and so that $(1-\lambda)A+\lambda B$ is also measurable, then $\phi$ is positively decreasing.
\end{prop}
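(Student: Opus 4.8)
The plan is to feed the hypothesis a very economical family of test sets---closed intervals having the origin as one endpoint---for which the Minkowski combination is completely transparent, and then to read off concavity of a primitive of $\phi$.

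First I would treat the positive half-line. For $a,b\ge 0$ set $A=[0,a]$ and $B=[0,b]$; both contain the origin, and since they are intervals their $\lambda$-combination is again an interval, namely $(1-\lambda)A+\lambda B=[0,(1-\lambda)a+\lambda b]$. Writing $\Phi(x)=\int_0^x\phi(s)\,\dlat s$ for the primitive of $\phi$, the three measures appearing in \eqref{e:linBMimplies_quasi} are exactly $\mu(A)=\Phi(a)$, $\mu(B)=\Phi(b)$ and $\mu\bigl((1-\lambda)A+\lambda B\bigr)=\Phi((1-\lambda)a+\lambda b)$. Hence the hypothesis reduces to
\[
\Phi\bigl((1-\lambda)a+\lambda b\bigr)\ge(1-\lambda)\Phi(a)+\lambda\Phi(b)
\]
for all $a,b\ge 0$ and all $\lambda\in(0,1)$, which is precisely the statement that $\Phi$ is concave on $[0,\infty)$. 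Now I would invoke the continuity of $\phi$: by the fundamental theorem of calculus it guarantees that $\Phi$ is continuously differentiable with $\Phi'=\phi$ on $[0,\infty)$, and a differentiable concave function has non-increasing derivative, so $\phi$ is non-increasing on $[0,\infty)$; that is, $t\mapsto\phi(t)$ is decreasing there.

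To finish, the negative half-line is handled by the symmetric choice $A=[-a,0]$, $B=[-b,0]$ (again containing the origin, with combination $[-((1-\lambda)a+\lambda b),0]$). Setting $\Psi(x)=\int_{-x}^{0}\phi(s)\,\dlat s=\int_0^x\phi(-s)\,\dlat s$, the same computation shows that $\Psi$ is concave with $\Psi'(x)=\phi(-x)$, so $t\mapsto\phi(-t)$ is decreasing on $[0,\infty)$ as well. Together these two monotonicities are exactly the assertion that $\phi$ is positively decreasing.

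The calculations here are routine; the only real decision---and the mild cleverness---is the choice of test sets. The point of anchoring the intervals at the origin is that the Minkowski combination then neither gains nor loses the origin as an endpoint, so the linear inequality collapses cleanly into the defining inequality for concavity of a one-variable primitive, after which continuity of $\phi$ does all the remaining work. The only place where I would have to be slightly careful is if the hypothesis were read with the customary restriction $\mu(A)\mu(B)>0$: in that case I would run the argument for those $a,b$ with $\Phi(a),\Phi(b)>0$, obtain concavity of $\Phi$ on the corresponding subinterval, and then extend it to all of $[0,\infty)$ using the continuity of $\Phi$.
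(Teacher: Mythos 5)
Your proposal is correct and follows essentially the same route as the paper: both test the hypothesis on intervals anchored at the origin, deduce concavity of the primitive $F(x)=\int_0^x\phi(t)\,\dlat t$ on the positive side (and the mirrored statement on the negative side, which the paper phrases as convexity of the same $F$ on $\R_{<0}$ rather than via your reflected primitive $\Psi$), and then apply the fundamental theorem of calculus to conclude that $\phi$ is monotone on each half-line. Your closing remark about the restriction $\mu(A)\mu(B)>0$ is harmless but unnecessary here, since the proposition as stated imposes no such restriction.
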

\begin{proof}
Let $F:\R\longrightarrow\R$ be the function given by
\[F(x)=\int_0^x\phi(t)\,\dlat t.\]
Fix $x,y>0$ and take $A=[0,x]$ and $B=[0,y]$. Then, from \eqref{e:linBMimplies_quasi}, we get
$F\bigl((1-\lambda)x+\lambda y\bigr)\geq(1-\lambda)F(x)+\lambda F(y)$. Since it is true for arbitrary $x,y\in\R_{>0}$ and $\lambda\in(0,1)$, we may assure that $F$ is concave on $\R_{>0}$. In the same way, we obtain that $F$ is convex on $\R_{<0}$. Moreover, since $\phi$ is continuous, by the fundamental theorem of calculus we get $F^\prime(x)=\phi(x)$ for all $x\in\R$. Now, the concavity of $F$ on $\R_{>0}$ (resp. the convexity of $F$ on $\R_{<0}$) implies that $\phi(x)=F^\prime(x)$ is decreasing on $\R_{>0}$ (resp. $\phi(x)=F^\prime(x)$ is increasing on $\R_{<0}$).
The result is now concluded from the continuity of $\phi$.
\end{proof}

\begin{remark}
The condition that the sets $A$ and $B$ contain the origin in Proposition~\ref{p:linBMimplies_quasi} is necessary but weak enough to allow a wide range of measures to satisfy $\overline{\mathrm{BM}}(1)$. Indeed, if we assume a more restrictive condition, such as $\overline{\mathrm{BM}}(1)$ holds for any pair of Euclidean balls in $\R$, then the measure $\mu$ is a constant multiple of the Lebesgue measure $\vol_1$ (see \cite[Theorem~1.2]{YN} and \cite{Borell}). This shows once again the necessity of taking weakly unconditional sets to ensure $\overline{\mathrm{BM}}(1)$ when dealing with arbitrary measures.
\end{remark}

\section{Brunn-Minkowski inequalities in product measure spaces}\label{s:main}

One of the best known proofs of the Brunn-Minkowski inequality \eqref{e:BM} is a classical version due to Kneser and S\"uss \cite{KnSu}, which is also reproduced in \cite{BF} and \cite{Sch2}. Hadwiger and Ohmann \cite{HaOh} gave a particularly beautiful proof for the compact setting, which is also reproduced in the survey by Gardner \cite[Section~4]{G} and in Burago and Zalgaller's monograph \cite{BZ} (see also the references therein). Knothe \cite{Kn} gave a proof of the Brunn-Minkowski inequality using a volume-preserving map. Both Kneser-S\"uss' and Hadwiger-Ohmann's proofs have an inductive flavor; the first one in the dimension $n$ whereas the second one in the number of boxes that determine the set (the general case is then obtained by ``approximation''). However, both the \emph{homogeneity} and the \emph{translation invariance} of the volume play a crucial role in these proofs. Thus, it is not possible to imitate these proofs in the context of metric measure spaces.

Nevertheless, one may obtain Brunn-Minkowski inequalities in further spaces where the measure does not satisfy neither homogeneity nor translation invariance. To this aim, here we will exploit some classical analytic tools that are often used when dealing with some stuff relative to the Pr\'ekopa-Leindler inequality, Theorem \ref{t:PrekopaLeindler} (e.g.~\cite{Borell,BL,DaUr,HMacb}. 
First we will show a linear Brunn-Minkowski inequality on product metric spaces in the spirit of Bonnesen's inequality in Example \ref{ex:1.2}.

\begin{teor}\label{t:linBM_ProdMetSpa}
Let $(X,\di_X,\mu_X)$, $(Y,\di_Y,\mu_Y)$ be metric measure spaces which satisfy $\overline{\mathrm{BM}}(1)$ and $\mathrm{BM}(p)$, respectively, where $p\in\R\cup\{\pm\infty\}$ and $\mu_X, \mu_Y$ are $\sigma$-finite. Let $\lambda\in(0,1)$ and let $A,B\subset X\times Y$ be non-empty measurable sets such that
$(1-\lambda)A\star\lambda B$ is also measurable and so that
\begin{equation}\label{e:igual_seccion_max}
\sup_{x\in X}\mu_Y(A(x))=\sup_{x\in X}\mu_Y(B(x)).
\end{equation}
Then
\begin{equation*}
\mu_{X\times Y}\bigl((1-\lambda)A\star\lambda B\bigr)\geq(1-\lambda)\mu_{X\times Y}(A)+\lambda\,\mu_{X\times Y}(B).
\end{equation*}
\end{teor}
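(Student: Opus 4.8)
The plan is to slice everything over the first factor $X$ and reduce the statement to the linear inequality $\overline{\mathrm{BM}}(1)$ that $X$ already enjoys, exploiting $\mathrm{BM}(p)$ on $Y$ fibrewise. Writing $C=(1-\lambda)A\star\lambda B$, I first introduce the three section functions $f(x)=\mu_Y(A(x))$, $g(x)=\mu_Y(B(x))$ and $h(x)=\mu_Y(C(x))$ on $X$. Since $\mu_X,\mu_Y$ are $\sigma$-finite, Fubini--Tonelli (Theorem \ref{t:Fubini}) guarantees that these are measurable and that $\mu_{X\times Y}(A)=\int_X f\,\dlat\mu_X$, and likewise for $B$ and $C$. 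By Cavalieri's principle it then suffices to compare the $t$-integrals of the superlevel measures $\mu_X(\{f\ge t\})$, $\mu_X(\{g\ge t\})$ and $\mu_X(\{h\ge t\})$.

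The heart of the argument is a superlevel-set inclusion. Let $M=\sup_x\mu_Y(A(x))=\sup_x\mu_Y(B(x))$ be the common supremum provided by \eqref{e:igual_seccion_max}, and fix $t\in(0,M)$. I claim that
\[\{x\in X:h(x)\ge t\}\supset(1-\lambda)\{f\ge t\}\star_{\di_X}\lambda\{g\ge t\}.\]
To see this, take $z_1$ in the right-hand side, so that $z_1\in(1-\lambda)\{a_1\}\star_{\di_X}\lambda\{b_1\}$ for some $a_1,b_1$ with $\mu_Y(A(a_1))\ge t$ and $\mu_Y(B(b_1))\ge t$. Unpacking the product structure through Proposition \ref{p:starProdSpac} (so that $\star=\star'$), one checks that $C(z_1)\supset(1-\lambda)A(a_1)\star_{\di_Y}\lambda B(b_1)$. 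Since $t>0$ makes both fibre measures positive, $\mathrm{BM}(p)$ on $Y$ legitimately applies and yields
\[h(z_1)=\mu_Y(C(z_1))\ge M_p\bigl(\mu_Y(A(a_1)),\mu_Y(B(b_1)),\lambda\bigr)\ge M_p(t,t,\lambda)=t,\]
using the monotonicity and idempotency of the $p$-mean. Hence $z_1\in\{h\ge t\}$, proving the inclusion. This is precisely the step where hypothesis \eqref{e:igual_seccion_max} is indispensable: equalising the maximal sections lets the same threshold $t$ work on both sides, and the identity $M_p(t,t,\lambda)=t$ makes the dependence on $p$ disappear, leaving a genuinely linear inequality.

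With the inclusion in hand, I apply $\overline{\mathrm{BM}}(1)$ in $X$ to the measurable sets $\{f\ge t\}$, $\{g\ge t\}$, $\{h\ge t\}$ (which, for $t\in(0,M)$, are non-empty by the definition of $M$), obtaining $\mu_X(\{h\ge t\})\ge(1-\lambda)\mu_X(\{f\ge t\})+\lambda\mu_X(\{g\ge t\})$ for every $t\in(0,M)$. Note that $\overline{\mathrm{BM}}(1)$ rather than $\mathrm{BM}(1)$ is needed here, since a non-empty superlevel set in $X$ may still be $\mu_X$-null. Integrating this pointwise (in $t$) inequality over $(0,M)$, and using that $f,g\le M$ forces $\int_0^{\infty}=\int_0^{M}$ for their superlevel integrals while $\int_0^{\infty}\mu_X(\{h\ge t\})\,\dlat t\ge\int_0^{M}\mu_X(\{h\ge t\})\,\dlat t$, Cavalieri's principle converts everything back into measures and gives exactly $\mu_{X\times Y}(C)\ge(1-\lambda)\mu_{X\times Y}(A)+\lambda\mu_{X\times Y}(B)$. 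The remaining cases $M=0$ (both sets $\mu_{X\times Y}$-null, so the inequality is trivial) and $M=\infty$ (integrate over all of $(0,\infty)$) are immediate. The main obstacle is the fibrewise inclusion together with the bookkeeping around it: unpacking $\star$ via Proposition \ref{p:starProdSpac}, verifying that the fibre measures are positive so that $\mathrm{BM}(p)$ genuinely applies, and correctly identifying $(0,M)$ as the range where the superlevel sets are non-empty.
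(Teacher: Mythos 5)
Your proposal is correct and takes essentially the same route as the paper's own proof: slicing over $X$, deriving the superlevel-set inclusion $\{h\ge t\}\supset(1-\lambda)\{f\ge t\}\star\lambda\{g\ge t\}$ from $\mathrm{BM}(p)$ applied fibrewise on $Y$ (the identity $M_p(t,t,\lambda)=t$ being the implicit step there), applying $\overline{\mathrm{BM}}(1)$ in $X$, and integrating via Fubini--Tonelli and Cavalieri's principle. You merely make explicit a few points the paper leaves tacit --- the positivity of the fibre measures needed to invoke $\mathrm{BM}(p)$, the use of Proposition \ref{p:starProdSpac} to unpack $\star$, and the boundary cases $M=0$ and $M=\infty$ --- all of which are handled correctly.
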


\begin{proof}
Along this proof we will use the same symbol $\star$ to denote the operation on any of the three spaces $X$, $Y$ and $X\times Y$. We shall denote the product measure $\mu_{X\times Y}$ simply by $\mu$.
We set $\alpha=\sup_{x\in X}\mu_Y(A(x))=\sup_{x\in X}\mu_Y(B(x))\in\R_{\geq0}\cup\{+\infty\}$.

In case $\alpha=0$, Fubini-Tonelli's Theorem and \eqref{e:igual_seccion_max} imply that $\mu(A)=\mu(B)=0$ and the linear Brunn-Minkowski inequality holds trivially. Hence we may assume that $\alpha>0$.

For any pair of points $s,r\in X$ such that $\mu_Y(A(s))\mu_Y(B(r))>0$, the sets $A(r)$ and $B(s)$ are non-empty. Take a point $t_\lambda\in (1-\la)\{s\}\star\la\{r\}$. This implies that $\di_X(t_\lambda,s)=\lambda\di_X(s,r)$, $\di_X(t_\lambda,r)=(1-\lambda)\di_X(s,r)$. From these equalities the inclusion
\[
\bigl((1-\lambda)A\star\lambda B\bigr)(t_\lambda)\supset(1-\lambda)A(s)\star\lambda B(r)
\]
follows immediately and, since BM($p$) is satisfied on $Y$, we get
\begin{equation}\label{e:concavidad_secciones}
\begin{split}
\mu_Y\Bigl(\bigl((1-\lambda)A\star\lambda B\bigr)(t_\lambda)\Bigr)
&\geq\mu_Y\bigl((1-\lambda)A(s)\star\lambda B(r)\bigr)\\
&\geq M_p(\mu_Y(A(s)),\mu_Y(B(r)),\la).
\end{split}
\end{equation}

Consider now the non-negative functions $f,g,h:X\longrightarrow\R_{\geq0}$ defined by $f(t)=\mu_Y(A(t))$, $g(t)=\mu_Y(B(t))$, and $h(t)=\mu_Y\bigl(\bigl((1-\lambda)A\star\lambda B\bigr)(t)\bigr)$. We clearly have (cf.~\eqref{e:igual_seccion_max})
\begin{equation}\label{e:novacios}
\bigl\{x\in X:\,f(x)\geq t\bigr\},\,\bigl\{x\in X:\,g(x)\geq t\bigr\}\neq\emptyset,
\end{equation}
for all $0\leq t<\alpha$.

From \eqref{e:novacios} and \eqref{e:concavidad_secciones} we get
\begin{equation*}
\{x\in X: h(x)\geq t\}\supset(1-\lambda)\{x\in X:f(x)\geq t\}\star\lambda\{x\in X: g(x)\geq t\}
\end{equation*}
for all $0<t<\alpha$.
Therefore, by the linear Brunn-Minkowski inequality in $X$, we have
\begin{multline*}
\mu_X\bigl(\{x\in X: h(x)\geq t\}\bigr)
\geq(1-\lambda)\mu_X\bigl(\{x\in X: f(x)\geq t\}\bigr)\\ +\lambda\mu_X\bigl(\{x\in X: g(x)\geq t\}\bigr)
\end{multline*}
for all $0<t<\alpha$.

Finally, by the above inequality, and using Fubini-Tonelli's Theorem and Cavalieri's Principle, Theorem \ref{t:Fubini}, we get
\begin{equation*}
\begin{split}
\mu\bigl((1-\lambda)A\star\lambda B\bigr)&=\int_X \,h(x)\,\dlat \mu_X(x)
\\
&= \int_0^{+\infty}\mu_X\bigl(\{x\in X: h(x)\geq t\}\bigr)\, \dlat t
\\
&\geq \int_0^\alpha\mu_X\bigl(\{x\in X: h(x)\geq t\}\bigr)\, \dlat t
\\
&\geq \int_0^\alpha\Bigl((1-\lambda)\mu_X\bigl(\{x\in X: f(x)\geq t\}\bigr)
\\
&\hspace{0.2\textwidth}+\int_0^\alpha\lambda\mu_X\bigl(\{x\in X: g(x)\geq t\}\bigr)\Bigr)\, \dlat t
\\
&= (1-\lambda)\int_X f(x)\,\dlat \mu_X(x) + \lambda\int_X g(x)\,\dlat \mu_X(x)
\\
&=(1-\lambda)\mu(A)+\lambda\,\mu(B),
\end{split}
\end{equation*}
as desired.
\end{proof}

As an immediate consequence of the proof of the above result we get the following corollary.

\begin{corollary}\label{c:linBM_ProdMetSpa}
Let $(X,\di_X,\mu_X)$, $(Y,\di_Y,\mu_Y)$ be metric measure spaces for which there exist certain families $\mathcal{F}_X\subset \mathcal{P}(X)$, $\mathcal{F}_Y\subset \mathcal{P}(Y)$ that satisfy  $\overline{\mathrm{BM}}(1)$ and $\mathrm{BM}(p)$, respectively, where $p\in\R\cup\{\pm\infty\}$ and $\mu_X, \mu_Y$ are $\sigma$-finite.
Let $A,B\subset X\times Y$ be measurable sets such that $(1-\lambda)A\star\lambda B$ is also measurable and so that
\begin{equation*}
\sup_{x\in X}\mu_Y(A(x))=\sup_{x\in X}\mu_Y(B(x)).
\end{equation*}
If moreover $A,B$ satisfy:
\begin{enumerate}
  \item $A(t), B(t)\in\mathcal{F}_Y$ for all $t\in X$,

  \smallskip

  \item $\{x\in X:\mu_Y(A(x))\geq t\},\{x\in X:\mu_Y(B(x))\geq t\}\in\mathcal{F}_X$ for all $0<t<\sup_{x\in X}\mu_Y(A(x))$ (or a.e.),
\end{enumerate}
then
\begin{equation*}
\mu_{X\times Y}\bigl((1-\lambda)A\star\lambda B\bigr)\geq(1-\lambda)\mu_{X\times Y}(A)+\lambda\,\mu_{X\times Y}(B).
\end{equation*}
\end{corollary}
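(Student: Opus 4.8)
The plan is to reproduce the proof of Theorem~\ref{t:linBM_ProdMetSpa} essentially verbatim; the only new task is to check, at each of the two places where a Brunn-Minkowski inequality is invoked, that the sets to which it is applied lie in the prescribed families. Conditions (i) and (ii) are tailored exactly to these two applications: (i) governs the fibrewise inequality in $Y$ and (ii) governs the superlevel-set inequality in $X$.

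First I would set $\alpha=\sup_{x\in X}\mu_Y(A(x))=\sup_{x\in X}\mu_Y(B(x))$, dispatching the trivial case $\alpha=0$ via Fubini-Tonelli and the sup-hypothesis (which force $\mu_{X\times Y}(A)=\mu_{X\times Y}(B)=0$). For $\alpha>0$, I fix $s,r\in X$ with $\mu_Y(A(s))\mu_Y(B(r))>0$ and a point $t_\lambda\in(1-\lambda)\{s\}\star\lambda\{r\}$; the inclusion $\bigl((1-\lambda)A\star\lambda B\bigr)(t_\lambda)\supset(1-\lambda)A(s)\star\lambda B(r)$ is purely metric and survives unchanged. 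The key point is that condition~(i) gives $A(s),B(r)\in\mathcal{F}_Y$, so $\mathrm{BM}(p)$ on $\mathcal{F}_Y$ applies to this pair; combined with monotonicity of $\mu_Y$ along the inclusion, this reproduces \eqref{e:concavidad_secciones}.

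Next I introduce $f(t)=\mu_Y(A(t))$, $g(t)=\mu_Y(B(t))$ and $h(t)=\mu_Y\bigl(\bigl((1-\lambda)A\star\lambda B\bigr)(t)\bigr)$, which are measurable by Fubini-Tonelli. As in the theorem, \eqref{e:novacios} and \eqref{e:concavidad_secciones} yield, for every $0<t<\alpha$,
\begin{equation*}
\{x\in X:h(x)\geq t\}\supset(1-\lambda)\{x\in X:f(x)\geq t\}\star\lambda\{x\in X:g(x)\geq t\}.
\end{equation*}
Here condition~(ii) furnishes $\{f\geq t\},\{g\geq t\}\in\mathcal{F}_X$, so $\overline{\mathrm{BM}}(1)$ on $\mathcal{F}_X$ applies to bound $\mu_X$ of the left-hand set from below by the linear combination of the measures of the two superlevel sets. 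Integrating in $t$ and invoking Cavalieri's Principle and Fubini-Tonelli then closes the argument exactly as in Theorem~\ref{t:linBM_ProdMetSpa}.

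The one subtlety I would flag is the bookkeeping about \emph{which} sets must belong to the families: the ambient sets on the left of each inequality---the section $\bigl((1-\lambda)A\star\lambda B\bigr)(t_\lambda)$ and the superlevel set $\{h\geq t\}$---need \emph{not} lie in $\mathcal{F}_Y$ or $\mathcal{F}_X$, because each Brunn-Minkowski inequality is invoked only for the pair $(A(s),B(r))$, resp.\ $(\{f\geq t\},\{g\geq t\})$, to lower-bound the measure of the $\star$-combination, after which ordinary measure monotonicity transfers the estimate to the larger ambient set. This is precisely why conditions~(i) and~(ii) suffice and no further closure assumption on the families is needed. Two further reasons the particular inequalities matter: $\overline{\mathrm{BM}}(1)$ (rather than $\mathrm{BM}(1)$) is used in the base so that superlevel sets of measure zero cause no trouble, and the ``a.e.'' option in~(ii) is harmless since a null set of levels $t$ does not affect the Cavalieri integral.
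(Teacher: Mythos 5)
Your proposal is correct and takes essentially the same route as the paper, which derives Corollary \ref{c:linBM_ProdMetSpa} precisely as ``an immediate consequence of the proof'' of Theorem \ref{t:linBM_ProdMetSpa}: one reruns that proof verbatim and checks that the two Brunn-Minkowski invocations only ever involve the pairs $\bigl(A(s),B(r)\bigr)$ and $\bigl(\{f\geq t\},\{g\geq t\}\bigr)$, which conditions i) and ii) place in $\mathcal{F}_Y$ and $\mathcal{F}_X$ respectively. Your closing observation---that the ambient section $\bigl((1-\lambda)A\star\lambda B\bigr)(t_\lambda)$ and the superlevel set $\{h\geq t\}$ need not belong to the families, since they enter only as the measurable supersets $C$ in Definition \ref{d:1} and via monotonicity of the measure---is exactly the reading the paper relies on when it later applies the corollary (for instance in the proof of Theorem \ref{t:BM_prod_quasi-conc}, where only the sections and superlevel sets of $A$ and $B$ are verified to be weakly unconditional).
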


The above proof can be exploited to obtain a general Brunn-Minkowski inequality in the setting of product metric measure spaces. This is the content of Theorem \ref{t:BM_ProdMetSpa}.

\begin{proof}[Proof of Theorem \ref{t:BM_ProdMetSpa}]
Along this proof we will use the same symbol $\star$ to denote the operation on any of the three spaces $X$, $Y$ and $X\times Y$. We shall denote the product measure $\mu_{X\times Y}$ simply by $\mu$.

Let $A, B, C\subset X\times Y$ be measurable sets such that $\mu(A)\mu(B)>0$ and $C\supset(1-\lambda)A\star\lambda B$.
Since $\mu$ is a Radon measure we may assume, without loss of generality, that $A$ and $B$ are compact: to prove this, we choose two sequences of compact sets $(K_i)_i$ and $(L_i)_i$ with positive volume such that $K_i\subset A$, $L_i\subset B$ for all $i\in\N$, and
\[
\mu(A)=\lim_i\mu(K_i), \quad \mu(B)=\lim_i\mu(L_i).\]
Assuming that BM($1/(1+p^{-1})$) holds for the pair $(K_i,L_i)$ we clearly have
\begin{equation*}
\begin{split}
\mu(C)&\geq\mu((1-\lambda)K_i\star\lambda L_i)\\
&\geq M_{1/(1+p^{-1})}(\mu(K_i),\mu(L_i),\la).
\end{split}
\end{equation*}
Taking limits on both sides, we get BM($1/(1+p^{-1})$) for $\{A, B, C\}$.

We take the non-negative functions $f,g,h:X\longrightarrow\R_{\geq0}$ given by
\[f(t)=\frac{\mu_Y(A(t))}{\enorm{\mu_Y(A(\cdot))}_\infty}, \ g(t)=\frac{\mu_Y(B(t))}{\enorm{\mu_Y(B(\cdot))}_\infty},
\ h(t)=\frac{\mu_Y(C(t))}{C_p},\]
where
\[
C_p=M_p\big(\enorm{\mu_Y(A(\cdot))}_\infty, \enorm{\mu_Y(B(\cdot))}_\infty,\lambda\big).
\]
We notice that the above functions are well-defined: denominators are positive since $\mu(A)\mu(B)>0$, and they are finite because $\{y\in Y:\, (x,y)\in A \text{ for some } x\in X\}$ and $\{y\in Y:\, (x,y)\in B \text{ for some } x\in X\}$ are compact subsets of $Y$ and $\mu_Y$ is locally finite. Furthermore
\[\sup_{t\in X}f(t)=\sup_{t\in X}g(t)=1.\]

We show that, for any pair of points $s,r\in X$, and any $t_\la\in (1-\la)\{s\}\star\la\{r\}$, we have
\begin{equation}\label{e:condit_hfg}
h(t_\lambda)\geq\min\{f(s),g(r)\}.
\end{equation}
To check the validity of \eqref{e:condit_hfg}, it is enough to consider the case $\mu_Y(A(s))\mu_Y(B(r))>0$.
Hence, $A(s), B(r)$ are non-empty, the inclusions
\[
C(t_\la)\supset\bigl((1-\lambda)A\star\lambda B\bigr)(t_\lambda)\supset(1-\lambda)A(s)\star\lambda B(r)
\]
trivially hold, and we have
\begin{equation*}
C_ph(t_\la)=\mu_Y(C(t_\la))\ge M_p(\mu_Y(A(s)),\mu_Y(B(r)),\la).
\end{equation*}

Now, for $p\neq0,+\infty$, we obtain
\begin{equation*}
\begin{split}
M_p\big(\mu_Y(A(s)),\mu_Y(B(r)),\la\big)=&\bigl((1-\lambda)\mu_Y(A(s))^p+\lambda\mu_Y(B(r))^p\bigr)^{1/p}
\\
&=C_p\bigl((1-\theta)f(s)^p+\theta g(r)^p\bigr)^{1/p}\\
&\geq C_p\min\{f(s),g(r)\},
\end{split}
\end{equation*}
where $\theta=\displaystyle\frac{\lambda\enorm{\mu_Y(B(\cdot))}_\infty^p}{C_p^p}\in(0,1)$.

For the case $p=0$, we get
\begin{equation*}
\begin{split}
M_0\big(\mu_Y(A(s)),\mu_Y(B(r)),\la\big)&=
\mu_Y(A(s))^{1-\lambda}\mu_Y(B(r))^{\lambda}
\\
&=C_0 \, f(s)^{1-\lambda}g(r)^{\lambda}\\
&\geq C_0\min\{f(s),g(r)\}.
\end{split}
\end{equation*}

For $p=\infty$, we clearly have
\begin{align*}
M_{\infty}\big(\mu_Y(A(s)),\mu_Y(B(r)),\la\big)&=\max\{\mu_Y(A(s)),\mu_Y(B(r))\}
\\
&\geq C_{\infty}\,\min\{f(s),g(r)\}.
\end{align*}
Therefore, we have shown \eqref{e:condit_hfg}.

The definition of $f$ and $g$ implies that the sets
\[
\bigl\{x\in X:\,f(x)\geq t\bigr\},\,\bigl\{x\in X:\,g(x)\geq t\bigr\}
\]
are non-empty for any $t\in [0,1)$. Now, \eqref{e:condit_hfg} trivially implies
\[
\{x\in X: h(x)\geq t\}\supset(1-\lambda)\{x\in X:f(x)\geq t\}\star\lambda\{x\in X: g(x)\geq t\}
\]
and, since $X$ satisfies $\overline{\mathrm{BM}}(1)$, we have
\begin{multline*}
\mu_X\bigl(\{x\in X: h(x)\geq t\}\bigr)
\geq(1-\lambda)\mu_X\bigl(\{x\in X: f(x)\geq t\}\bigr)\\ +\lambda\mu_X\bigl(\{x\in X: g(x)\geq t\}\bigr)
\end{multline*}
for any $t\in [0,1)$.

Now, following similar steps to those described at the end of the proof of Theorem~\ref{t:linBM_ProdMetSpa}, we get
\begin{equation*}
\begin{split}
\mu\bigl((1-\lambda)A\star\lambda B\bigr)&=C_p\int_X h(x)\,\dlat \mu_X(x)\\
&\geq C_p\left((1-\lambda)\int_X f(x)\,\dlat \mu_X(x) + \lambda\int_X g(x)\,\dlat \mu_X(x)\right)\\
&=C_p\left((1-\lambda)\frac{\mu(A)}{\enorm{\mu_Y(A(\cdot))}_\infty}+
\lambda\frac{\mu(B)}{\enorm{\mu_Y(B(\cdot))}_\infty}\right).
\end{split}
\end{equation*}
The latter quantity is no smaller than $M_{1/(1+p^{-1})}(\mu(A),\mu(B),\la)$. When $p\neq 0,\infty$, this follows from the reverse H\"older inequality, \cite[Theorem~1, p.~178]{Bu},
\[
(1-\la)a_1b_1+\la a_2 b_2\ge \big((1-\la)a_1^{-p}+\la a_2^{-p}\big)^{-1/p}
\big((1-\la)b_1^q+\la b_2^q\big)^{1/q}
\]
where $q=1/(1+p^{-1})$ is the H\"older conjugate of $(-p)\le 1$, just by taking $a_1=|\mu_Y(A(\cdot))|_\infty^{-1}$, $a_2=|\mu_Y(B(\cdot))|_\infty^{-1}$, $b_1=\mu(A)$, $b_2=\mu(B)$.

The case $p=0$ follows from the Arithmetic-Geometric mean inequality and the case $p=\infty$ is immediate.
\end{proof}

Regarding Definition \ref{d:1}, we could say that a metric measure space $(X,\di,\mu)$  satisfies the Brunn-Minkowski inequality with respect to the parameters $p\in\R\cup\{\pm\infty\}$ and $q\in(1,\infty)$, BM($p, q$) for short, if
\begin{equation*}
\mu(C)\geq\bigl((1-\lambda)^q\mu(A)^p+\lambda^q\mu(B)^p\bigr)^{1/p}
\end{equation*}
holds for all $\lambda\in(0,1)$ and any non-empty measurable sets $A, B, C$ with $\mu(A)\mu(B)>0$ such that $C\supset(1-\lambda)A\star_\di\lambda B$.
Analogously, if the above condition holds for general non-empty measurable sets $A, B, C$ such that $C\supset(1-\lambda)A\star_\di\lambda B$, without the restriction $\mu(A)\mu(B)>0$, we say that $(X,\di,\mu)$ satisfies $\overline{\mathrm{BM}}(p,q)$.
This notion is sometimes useful (e.g. \cite{Ju}, \cite[Section~4]{balogh}, and the references therein) when the classical Brunn-Minkowski inequality BM($p$) does not hold.

In particular, given $n,N>0$, a metric measure space satisfies BM$(1/n,N/n)$ if
\[
\mu(C)^{1/n}\ge (1-\la)^{N/n}\mu(A)^{1/n}+\la^{N/n}\mu(B)^{1/n}
\]
holds for all measurable sets $A,B,C$ with $\mu(A)\mu(B)>0$ such that $C\supset (1-\la)A\star\la B$.

Under the same initial assumptions of Theorem \ref{t:BM_ProdMetSpa},
and following its proof, we may assert that if $(X,\di_X,\mu_X)$, $(Y,\di_Y,\mu_Y)$ satisfy $\overline{\mathrm{BM}}(1,q)$ and $\mathrm{BM}(p,q)$, respectively, for some $0\neq p\geq-1$ and $q\in(1,\infty)$, then $(X\times Y,\di_{X\times Y},\mu_{X\times Y})$ satisfies $\mathrm{BM}\bigl(1/(1+p^{-1}),q\bigr)$. We notice that, since $(1-\lambda)^q+\lambda^q\neq1$, here the case $p=0$ makes no sense.

A not so trivial generalization of Theorem~\ref{t:BM_ProdMetSpa}, pointed out to the authors by Luca Rizzi, is the following.

\begin{teor}\label{t:BM_ProdMetSpa_gen}
Let $(X,\di_X,\mu_X)$, $(Y,\di_Y,\mu_Y)$ be metric measure spaces where $\mu_X$ is $\sigma$-finite, $\mu_Y$ is locally finite and $\mu_{X\times Y}$ is a Radon measure. Let $n,N>0$.

Assume that $(X,\di_X,\mu_X)$, $(Y,\di_Y,\mu_Y)$ satisfy $\overline{\mathrm{BM}}(1)$ and $\mathrm{BM}(1/n,N/n)$, respectively.
Then $(X\times Y,\di_{X\times Y},\mu_{X\times Y})$ satisfies $\mathrm{BM}\bigl(1/(n+1),(N+1)/(n+1))$.
\end{teor}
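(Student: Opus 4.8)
The plan is to run the proof of Theorem~\ref{t:BM_ProdMetSpa} almost verbatim, changing only the normalising constant and the concluding analytic inequality so as to accommodate the weights $(1-\la)^{N/n}$, $\la^{N/n}$ coming from $\mathrm{BM}(1/n,N/n)$. First I would use that $\mu_{X\times Y}$ is Radon to reduce to the case where $A,B\subset X\times Y$ are compact with $\mu_{X\times Y}(A)\mu_{X\times Y}(B)>0$, approximating from within by compact sets of positive measure and passing to the limit (the target weighted mean is continuous in its two arguments). Writing $\mu=\mu_{X\times Y}$ and $a=\enorm{\mu_Y(A(\cdot))}_\infty$, $b=\enorm{\mu_Y(B(\cdot))}_\infty$, both quantities are finite, since the projections of $A,B$ onto $Y$ are compact and $\mu_Y$ is locally finite, and positive, by Fubini-Tonelli's Theorem together with $\mu(A)\mu(B)>0$.

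Next I would set
\[
C_{n,N}=\bigl((1-\la)^{N/n}a^{1/n}+\la^{N/n}b^{1/n}\bigr)^{n}
\]
and define $f(t)=\mu_Y(A(t))/a$, $g(t)=\mu_Y(B(t))/b$, $h(t)=\mu_Y(C(t))/C_{n,N}$, so that $\sup f=\sup g=1$. The key pointwise estimate is again $h(t_\la)\ge\min\{f(s),g(r)\}$ for every $t_\la\in(1-\la)\{s\}\star\la\{r\}$. To see it I restrict to the case $\mu_Y(A(s))\mu_Y(B(r))>0$ (otherwise the right-hand side vanishes), use $C(t_\la)\supset(1-\la)A(s)\star\la B(r)$ and the hypothesis $\mathrm{BM}(1/n,N/n)$ on $Y$ to get
\[
C_{n,N}^{1/n}\,h(t_\la)^{1/n}\ge(1-\la)^{N/n}a^{1/n}f(s)^{1/n}+\la^{N/n}b^{1/n}g(r)^{1/n};
\]
dividing by $C_{n,N}^{1/n}=(1-\la)^{N/n}a^{1/n}+\la^{N/n}b^{1/n}$ exhibits the right-hand side as a convex combination of $f(s)^{1/n}$ and $g(r)^{1/n}$, hence bounded below by $\min\{f(s),g(r)\}^{1/n}$. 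Note this step is insensitive to the actual values of the weights, so it carries over from Theorem~\ref{t:BM_ProdMetSpa} with no change. The estimate then gives $\{h\ge t\}\supset(1-\la)\{f\ge t\}\star\la\{g\ge t\}$ with both level sets non-empty for $t\in[0,1)$; applying $\overline{\mathrm{BM}}(1)$ on $X$, integrating in $t$ via Cavalieri's Principle and using Fubini-Tonelli's Theorem exactly as at the end of the proof of Theorem~\ref{t:linBM_ProdMetSpa}, I obtain $\mu(C)/C_{n,N}\ge(1-\la)\mu(A)/a+\la\mu(B)/b$.

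It then remains to show that this bound dominates the desired weighted mean, namely
\[
C_{n,N}\Bigl((1-\la)\tfrac{\mu(A)}{a}+\la\tfrac{\mu(B)}{b}\Bigr)\ge\Bigl((1-\la)^{(N+1)/(n+1)}\mu(A)^{1/(n+1)}+\la^{(N+1)/(n+1)}\mu(B)^{1/(n+1)}\Bigr)^{n+1}.
\]
This is the one genuinely new computation and where I expect the real work to lie, because the two factors on the left carry mismatched weights $(1-\la)^{N/n}$ and $(1-\la)$. The device I would use is to absorb those weights into the variables: setting $X_1=(1-\la)^{N}a$, $X_2=\la^{N}b$, $Y_1=(1-\la)\mu(A)/a$, $Y_2=\la\mu(B)/b$, one checks $C_{n,N}=(X_1^{1/n}+X_2^{1/n})^{n}$, the second factor equals $Y_1+Y_2$, and $X_iY_i$ reproduces the weighted summands on the right, e.g. $X_1Y_1=(1-\la)^{N+1}\mu(A)$. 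The inequality thus reduces to the weight-free statement
\[
\bigl(X_1^{1/n}+X_2^{1/n}\bigr)^{n}\,(Y_1+Y_2)\ge\bigl((X_1Y_1)^{1/(n+1)}+(X_2Y_2)^{1/(n+1)}\bigr)^{n+1},
\]
which follows from H\"older's inequality applied to the factorisation $(X_iY_i)^{1/(n+1)}=(X_i^{1/n})^{n/(n+1)}Y_i^{1/(n+1)}$ with conjugate exponents $(n+1)/n$ and $n+1$ (for $n=1$ this is merely Cauchy--Schwarz), after raising to the power $n+1$. The only delicate point is bookkeeping of the exponents $1/n$, $1/(n+1)$, $N/n$, $(N+1)/(n+1)$ and verifying that the substitution matches them correctly; once the weights are absorbed, the analytic core is elementary and the proof closes.
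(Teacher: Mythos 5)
Your proposal is correct and follows essentially the same route as the paper's proof: the same reduction to compact sets via the Radon property, the same normalised functions $f,g,h$ with constant $C_{p,q}=\bigl((1-\la)^{N/n}\enorm{\mu_Y(A(\cdot))}_\infty^{1/n}+\la^{N/n}\enorm{\mu_Y(B(\cdot))}_\infty^{1/n}\bigr)^{n}$, the same pointwise bound $h(t_\la)\geq\min\{f(s),g(r)\}$, and the same application of $\overline{\mathrm{BM}}(1)$ on $X$ through Cavalieri's Principle, yielding $\mu(C)\geq C_{p,q}\bigl((1-\la)\mu(A)/\enorm{\mu_Y(A(\cdot))}_\infty+\la\,\mu(B)/\enorm{\mu_Y(B(\cdot))}_\infty\bigr)$. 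Your concluding step---forward H\"older with exponents $(n+1)/n$ and $n+1$ after absorbing the weights into $X_i,Y_i$---is exactly equivalent to the paper's invocation of the reverse H\"older inequality of \cite[Theorem~1, p.~178]{Bu} with conjugate exponents $-1/n$ and $1/(n+1)$ and weight-carrying choices of $a_i,b_i$; if anything your substitution is cleaner, as the paper's printed choice of $a_2$ contains a typo (its factor should be $\la^{-(N-n)}$, not $(1-\la)^{-(N-n)}$), which your bookkeeping gets right.
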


\begin{proof}
The proof follows closely the one of Theorem~\ref{t:BM_ProdMetSpa}. By similar arguments it is enough to show that the desired Brunn-Minkowski inequality holds for compact subsets $A,B$ in $X\times Y$. We shall denote the product measure $\mu_{X\times Y}$ simply by $\mu$ and let $p=1/n$, $q=N/n$.

We consider the non-negative functions $f,g,h:X\longrightarrow\R_{\geq0}$ given by
\[f(t)=\frac{\mu_{X}(A(t))}{\enorm{\mu_{X}(A(\cdot))}_\infty}, \ g(t)=\frac{\mu_{X}(B(t))}{\enorm{\mu_{X}(B(\cdot))}_\infty},
\ h(t)=\frac{\mu_{X}(C(t))}{C_{p,q}},\]
where
\begin{equation*}
\begin{split}
C_{p,q}&=M_{p,q}\big(\enorm{\mu_{X}(A(\cdot))}_\infty, \enorm{\mu_{X}(B(\cdot))}_\infty,\lambda\big)\\
&=\bigl((1-\lambda)^q\enorm{\mu_{X}(A(\cdot))}_\infty^p+\lambda^q\enorm{\mu_{X}(B(\cdot))}_\infty^p\bigr)^{1/p}.
\end{split}
\end{equation*}
The above functions are well-defined and
\[\sup_{t\in X}f(t)=\sup_{t\in X}g(t)=1.\]

For any pair of points $s,r\in X$, and any $t_\la\in (1-\la)\{s\}\star\la\{r\}$, we have
\begin{equation}\label{e:condit_hfg_gen}
h(t_\lambda)\geq\min\{f(s),g(r)\}.
\end{equation}
To check the validity of \eqref{e:condit_hfg_gen} is enough to consider the case $\mu_{X}(A(s))\mu_{X}(B(r))>0$. Hence $A(s), B(r)$ are non-empty, the inclusion
\[
C(t_\la)\supset\bigl((1-\lambda)A\star\lambda B\bigr)(t_\lambda)\supset(1-\lambda)A(s)\star\lambda B(r)
\]
trivially holds, and we have
\begin{equation*}
\begin{split}
C_{p,q}h(t_\la)=\mu_{X}(C(t_\la))
&\geq\mu_{X}\bigl((1-\lambda)A(s)\star\lambda B(r)\bigr)\\
&\ge M_{p,q}\big(\mu_{X}(A(s)),\mu_{X}(B(r)),\la\big)
\\
&\ge C_{p,q}\min\{f(s),g(r)\}.
\end{split}
\end{equation*}
Hence \eqref{e:condit_hfg_gen} follows.

The definition of $f$ and $g$ implies that the sets
\[
\{f\ge t\}=\bigl\{x\in X:\,f(x)\geq t\bigr\},\quad \{g\ge t\}=\bigl\{x\in X:\,g(x)\geq t\bigr\}
\]
are non-empty for any $t\in [0,1)$. Now \eqref{e:condit_hfg_gen} trivially implies
\[
\{h\ge t\}\supset(1-\lambda)\, \{f\ge t\}\star\lambda\, \{g\ge t\}
\]
and, since $X$ satisfies $\overline{\mathrm{BM}}(1)$, we have
\begin{equation*}
\mu_{X}(\{h\ge t\})\ge (1-\la)\mu_{X}(\{f\ge t\})+\la\mu_{X}(\{g\ge t\}
\end{equation*}
for any $t\in [0,1)$.

Following similar steps to those described at the end of the proof of Theorem~\ref{t:linBM_ProdMetSpa}, we get
\begin{equation*}
\begin{split}
\mu(C)&=C_{p,q}\int_X h(x)\,\dlat \mu_X(x)\\
&\geq C_{p,q}\left((1-\lambda)\int_X f(x)\,\dlat \mu_X(x) + \lambda\int_X g(x)\,\dlat \mu_X(x)\right)\\
&=C_{p,q}\left((1-\lambda)\frac{\mu(A)}{\enorm{\mu_{X}(A(\cdot))}_\infty}+
\lambda\frac{\mu(B)}{\enorm{\mu_{X}(B(\cdot))}_\infty}\right).
\end{split}
\end{equation*}
We apply the reverse H\"older inequality
\[
(1-\la)a_1b_1+\la a_2 b_2\ge \big((1-\la)a_1^{-p}+\la a_2^{-p}\big)^{-1/p}
\big((1-\la)b_1^{p^\prime}+\la b_2^{p^\prime}\big)^{1/p^\prime},
\]
see \cite[Theorem~1, p.~178]{Bu}, where $p^\prime=1/(1+p^{-1})=(n+1)^{-1}$ and
\begin{align*}
a_1&=(1-\la)^{-(N-n)}|\mu_{Y}(A(\cdot))|_\infty^{-1},
\\
a_2&=(1-\la)^{-(N-n)}|\mu_{Y}(B(\cdot))|_\infty^{-1},
\\
b_1&=(1-\la)^{N-n}\mu(A),
\\
b_2&=\la^{N-n}\mu(B).
\end{align*}
This implies $\mu(C)\ge M_{p,q}(\mu(A),\mu(B),\la)$ and completes the proof.
\end{proof}

Theorem~\ref{t:BM_ProdMetSpa_gen} can be applied to the product of one or several copies of the real line with a Carnot group. The validity of the above Brunn-Minkowski inequality for corank $1$ Carnot groups has been recently established by Balogh et al.,~see Theorem~4.2 (ii) in \cite{balogh2}.

From the proof of Theorem~\ref{t:BM_ProdMetSpa}, we may obtain the following corollary, which will be very useful in order to obtain some Brunn-Minkowski inequa\-li\-ties for certain subfamilies of sets as, for instance, Theorem \ref{t:BM_prod_quasi-conc} shows.
\begin{corollary}\label{c1}
Let $(X,\di_X,\mu_X)$, $(Y,\di_Y,\mu_Y)$ be metric measure spaces for which there exist certain families $\mathcal{F}_X\subset \mathcal{P}(X)$, $\mathcal{F}_Y\subset \mathcal{P}(Y)$ that satisfy  $\overline{\mathrm{BM}}(1)$ and $\mathrm{BM}(p)$, respectively, where $p\geq-1$ and $\mu_X, \mu_Y$ are $\sigma$-finite.
Let $A,B\subset X\times Y$ be measurable sets such that $(1-\lambda)A\star\lambda B$ is so for $\lambda\in(0,1)$. If moreover $A,B$ satisfy:
\begin{enumerate}
  \item The non-negative functions $f,g:X\longrightarrow\R_{\geq0}$ given by
   \[f(t)=\frac{\mu_Y(A(t))}{\enorm{\mu_Y(A(\cdot))}_\infty}, \quad g(t)=\frac{\mu_Y(B(t))}{\enorm{\mu_Y(B(\cdot))}_\infty}\]
   are well-defined, i.e., $0<\enorm{\mu_Y(A(\cdot))}_\infty,\enorm{\mu_Y(B(\cdot))}_\infty <+\infty$,

   \smallskip

   \item $A(t), B(t)\in\mathcal{F}_Y$ for all $t\in X$,

  \smallskip

  \item $\{x\in X:f(x)\geq t\},\{x\in X:g(x)\geq t\}\in\mathcal{F}_X$ for any $0<t<1$ (or a.e.),
\end{enumerate}
then
\begin{equation*}
\mu_{X\times Y}\bigl((1-\lambda)A\star\lambda B\bigr)
\geq M_{1/(1+p^{-1})}\big(\mu_{X\times Y}(A),\mu_{X\times Y}(B),\la\big)
\end{equation*}
\end{corollary}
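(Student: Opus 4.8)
The plan is to transcribe the proof of Theorem~\ref{t:BM_ProdMetSpa} almost verbatim, the only novelty being that each invocation of a Brunn-Minkowski inequality must now take place inside the prescribed families $\mathcal{F}_X$ and $\mathcal{F}_Y$; conditions (1)--(3) are tailored precisely to license those invocations. I shall write $\star$ for the operation on all three spaces, put $\mu=\mu_{X\times Y}$, and set $C=(1-\lambda)A\star\lambda B$. Unlike in Theorem~\ref{t:BM_ProdMetSpa}, I cannot reduce to compact sets by inner regularity, since no Radon hypothesis is assumed here; instead, condition (1) plays that role directly, guaranteeing that $0<\enorm{\mu_Y(A(\cdot))}_\infty,\enorm{\mu_Y(B(\cdot))}_\infty<+\infty$, so that $f,g$ and $h(t)=\mu_Y(C(t))/C_p$, with $C_p=M_p\big(\enorm{\mu_Y(A(\cdot))}_\infty,\enorm{\mu_Y(B(\cdot))}_\infty,\lambda\big)$, are well defined and $\sup_X f=\sup_X g=1$.

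First I would establish the pointwise inequality $h(t_\lambda)\geq\min\{f(s),g(r)\}$ for all $s,r\in X$ and all $t_\lambda\in(1-\lambda)\{s\}\star\lambda\{r\}$, exactly as in Theorem~\ref{t:BM_ProdMetSpa}. It suffices to treat the case $\mu_Y(A(s))\mu_Y(B(r))>0$; then $A(s),B(r)$ are non-empty and, by condition (2), both lie in $\mathcal{F}_Y$. Since $C(t_\lambda)\supset(1-\lambda)A(s)\star\lambda B(r)$ and the family $\mathcal{F}_Y$ satisfies $\mathrm{BM}(p)$, I obtain $C_p\,h(t_\lambda)=\mu_Y(C(t_\lambda))\geq M_p(\mu_Y(A(s)),\mu_Y(B(r)),\lambda)$; normalising by $C_p$ and splitting into the cases $p\neq0,\infty$, $p=0$, and $p=\infty$ (yielding the factor $\big((1-\theta)f(s)^p+\theta g(r)^p\big)^{1/p}$, the quantity $f(s)^{1-\lambda}g(r)^{\lambda}$, and $\max\{f(s),g(r)\}$, respectively) delivers $h(t_\lambda)\geq\min\{f(s),g(r)\}$, precisely as before.

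Next I would push this estimate down to $X$. The normalisation $\sup_X f=\sup_X g=1$ makes the super-level sets $\{x\in X:f(x)\geq t\}$ and $\{x\in X:g(x)\geq t\}$ non-empty for every $t\in[0,1)$, and the pointwise inequality gives the inclusion $\{h\geq t\}\supset(1-\lambda)\{f\geq t\}\star\lambda\{g\geq t\}$. Here condition (3) enters: the two super-level sets lie in $\mathcal{F}_X$, so the family-$\overline{\mathrm{BM}}(1)$ on $X$ applies and produces $\mu_X(\{h\geq t\})\geq(1-\lambda)\mu_X(\{f\geq t\})+\lambda\mu_X(\{g\geq t\})$ for all $t\in[0,1)$. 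Integrating in $t$ and invoking Fubini-Tonelli's Theorem together with Cavalieri's Principle (Theorem~\ref{t:Fubini}), exactly as at the end of the proof of Theorem~\ref{t:linBM_ProdMetSpa}, I arrive at
\[
\mu(C)\geq C_p\left((1-\lambda)\frac{\mu(A)}{\enorm{\mu_Y(A(\cdot))}_\infty}+\lambda\frac{\mu(B)}{\enorm{\mu_Y(B(\cdot))}_\infty}\right).
\]

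Finally I would invoke the reverse H\"older inequality \cite[Theorem~1, p.~178]{Bu} with $a_1=\enorm{\mu_Y(A(\cdot))}_\infty^{-1}$, $a_2=\enorm{\mu_Y(B(\cdot))}_\infty^{-1}$, $b_1=\mu(A)$, $b_2=\mu(B)$ (treating $p=0$ via the Arithmetic-Geometric mean inequality and $p=\infty$ directly), which bounds the right-hand side from below by $M_{1/(1+p^{-1})}(\mu(A),\mu(B),\lambda)$ and completes the proof. The only genuine point to verify---and hence the main obstacle---is that conditions (2) and (3) are exactly the hypotheses that make the two family-restricted Brunn-Minkowski inequalities legitimate, while condition (1) substitutes for the missing inner-regularity reduction used in Theorem~\ref{t:BM_ProdMetSpa}; beyond this bookkeeping the argument is an unchanged repetition of that proof.
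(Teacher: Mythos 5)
Your proposal is correct and matches the paper exactly: the paper gives no standalone argument for Corollary \ref{c1}, presenting it precisely as a rerun of the proof of Theorem \ref{t:BM_ProdMetSpa} in which conditions ii) and iii) license the family-restricted applications of $\mathrm{BM}(p)$ in $Y$ and $\overline{\mathrm{BM}}(1)$ in $X$, while condition i) directly supplies the finiteness and positivity of $\enorm{\mu_Y(A(\cdot))}_\infty$ and $\enorm{\mu_Y(B(\cdot))}_\infty$ that the Radon/compactness reduction provided there. Your identification of these three substitutions, together with the unchanged pointwise estimate, Cavalieri integration, and reverse H\"older step, is exactly the intended derivation.
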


\begin{remark}
We would like to point out that, following the ideas of the proof of Theorem \ref{t:BM_ProdMetSpa}, we cannot expect to exchange the role of the linear Brunn-Minkowski inequality $\overline{\mathrm{BM}}(1)$ by a different one $\overline{\mathrm{BM}}(p)$, $p\neq 1$. Indeed, if that was the case, it would be possible to get an enhanced version of the classical Brunn-Minkowski inequality when assuming a common maximal volume section through parallel planes (of dimension smaller than $n-1$), a fact that is known to be not true (see \cite[Section~2]{HCYN2}).
\end{remark}

From the proof of Theorem \ref{t:BM_ProdMetSpa}, we may also assert the following:
\begin{corollary}\label{c:PL}
Let $(X,\di_X,\mu_X)$, $(Y,\di_Y,\mu_Y)$ be metric measure spaces
where $\mu_X$ is $\sigma$-finite, $\mu_Y$ is locally finite and $\mu_{X\times Y}$ is Radon.

If the spaces $(X,\di_X,\mu_X)$, $(Y,\di_Y,\mu_Y)$ satisfy $\mathrm{PL}$ and $\mathrm{BM}(0)$, respectively,
then $(X\times Y,\di_{X\times Y},\mu_{X\times Y})$ satisfies $\mathrm{BM}(0)$.
\end{corollary}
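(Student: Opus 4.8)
The plan is to reuse the scaffold of the proof of Theorem~\ref{t:BM_ProdMetSpa}, exploiting that the value $p=0$ collapses two of its ingredients into a single cleaner step. Since $M_0(\mu(A),\mu(B),\la)=0$ whenever $\mu(A)\mu(B)=0$, I would first dispose of that degenerate case trivially and assume $\mu(A)\mu(B)>0$. As $\mu_{X\times Y}$ is Radon, I would then reduce, exactly as in Theorem~\ref{t:BM_ProdMetSpa}, to compact $A,B$ by inner approximation: choosing compact $K_i\subset A$, $L_i\subset B$ with $\mu(K_i)\to\mu(A)$, $\mu(L_i)\to\mu(B)$, applying the asserted inequality to each triple $(K_i,L_i,C)$ (note $C\supset(1-\la)A\star\la B\supset(1-\la)K_i\star\la L_i$), and passing to the limit in $\mu(C)\ge\mu(K_i)^{1-\la}\mu(L_i)^\la$. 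The purpose of this reduction is that the $Y$-projections of compact sets are compact, hence of finite $\mu_Y$-measure by local finiteness, which supplies the $\sigma$-finiteness needed to invoke Fubini--Tonelli.

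Next I would introduce the (unnormalized) section-measure functions $f,g,h:X\to\R_{\ge0}$ by $f(t)=\mu_Y(A(t))$, $g(t)=\mu_Y(B(t))$ and $h(t)=\mu_Y(C(t))$; these are $\mu_X$-measurable by Fubini--Tonelli (Theorem~\ref{t:Fubini}). The heart of the argument is to check that $f,g,h$ satisfy precisely the hypothesis of the Pr\'ekopa--Leindler inequality on $X$: for all $s,r\in X$ and every $t_\la\in(1-\la)\{s\}\star\la\{r\}$ one has $h(t_\la)\ge f(s)^{1-\la}g(r)^\la$. When $f(s)g(r)=0$ this is trivial; otherwise $A(s)$ and $B(r)$ are non-empty of positive measure, and the section inclusion $C(t_\la)\supset\bigl((1-\la)A\star\la B\bigr)(t_\la)\supset(1-\la)A(s)\star\la B(r)$ (valid through the product-metric identity $\star=\star'$ of Proposition~\ref{p:starProdSpac}, exactly as in the proof of Theorem~\ref{t:linBM_ProdMetSpa}) together with $\mathrm{BM}(0)$ on $Y$ yields $h(t_\la)\ge\mu_Y(A(s))^{1-\la}\mu_Y(B(r))^\la=f(s)^{1-\la}g(r)^\la$.

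With this verified, the remaining steps are immediate and, crucially, bypass the superlevel-set and Cavalieri machinery that a general $p$ forces in Theorem~\ref{t:BM_ProdMetSpa}: applying $\mathrm{PL}$ directly on $X$ to the triple $(f,g,h)$ gives $\int_X h\,\dlat\mu_X\ge\bigl(\int_X f\,\dlat\mu_X\bigr)^{1-\la}\bigl(\int_X g\,\dlat\mu_X\bigr)^\la$, and Fubini--Tonelli identifies these three integrals with $\mu(C)$, $\mu(A)$ and $\mu(B)$, respectively. Hence $\mu(C)\ge\mu(A)^{1-\la}\mu(B)^\la=M_0(\mu(A),\mu(B),\la)$, which is $\mathrm{BM}(0)$ for $X\times Y$. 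I do not anticipate a serious obstacle: the only genuine content is the verification of the $\mathrm{PL}$ hypothesis for the section functions, while the sole technical care needed is the measurability and $\sigma$-finiteness bookkeeping, which the compactness reduction settles. Conceptually, the whole simplification rests on recognizing that the log-concave ($p=0$) structure is exactly the one that $\mathrm{PL}$ on $X$ is built to integrate, so no intermediate layer-cake decomposition of $f,g,h$ is required.
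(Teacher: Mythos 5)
Your proposal is correct and follows essentially the route the paper intends: Corollary~\ref{c:PL} is stated there as a byproduct of the proof of Theorem~\ref{t:BM_ProdMetSpa}, and your argument is exactly that adaptation --- the Radon/compactness reduction, the section inclusion $C(t_\la)\supset(1-\la)A(s)\star\la B(r)$ combined with $\mathrm{BM}(0)$ on $Y$ to verify the $\mathrm{PL}$ hypothesis for the section functions, and then $\mathrm{PL}$ on $X$ with Fubini--Tonelli. Your only deviation, dropping the normalization by $\enorm{\mu_Y(A(\cdot))}_\infty$ and $\enorm{\mu_Y(B(\cdot))}_\infty$, is harmless since in the multiplicative case $p=0$ these constants cancel exactly.
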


\medskip

In the same way as we use ``simple'' spaces that satisfy some Brunn-Minkowski inequality in order to obtain others on ``more involved'' ones, we will take advantage of spaces that satisfy the Pr\'ekopa-Leindler inequality, or more generally the so-called Borell-Brascamp-Lieb inequality, which is a generalization of Theorem \ref{t:PrekopaLeindler} for $p$-th means (see \cite{BL}, \cite{Borell} and also \cite{G} for a detailed presentation). We collect it here for the sake of completeness.
\begin{theorem}[Borell-Brascamp-Lieb inequality]\label{t:BBL}
Let $\lambda\in(0,1)$, $-1/n\leq p\leq\infty$ and let $f,g,h:\R^n\longrightarrow\R_{\geq0}$ be non-negative measurable
functions such that, for any $x,y\in\R^n$,
\begin{equation*}
h\bigl((1-\lambda) x+\lambda y\bigr)\geq M_{p}\bigl(f(x), g(y),\lambda\bigr).
\end{equation*}
Then
\begin{equation*}\label{e:BBL}
\int_{\R^n}h\,\dlat x\geq
M_{p/(np+1)}\left(\int_{\R^n}f\,\dlat x,\,\int_{\R^n}g\,\dlat x,\,\lambda\right).
\end{equation*}
\end{theorem}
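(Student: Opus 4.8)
The plan is to prove Theorem~\ref{t:BBL} by induction on the dimension $n$, isolating the one-dimensional case as the essential analytic core and then propagating it through $\R^n=\R^{n-1}\times\R$ by Fubini's theorem. Before starting I would dispose of the degenerate situations: if either $\int_{\R^n}f\,\dlat x$ or $\int_{\R^n}g\,\dlat x$ vanishes the right-hand side is $0$ (by the convention $M_q(a,b,\lambda)=0$ when $ab=0$) and there is nothing to prove, so I may assume both integrals are strictly positive, and by monotone truncation I may also assume they are finite. One must also keep track of the admissible range of exponents: for $p\ge -1/n$ the target exponent $p/(np+1)$ is well defined, and — as will be seen in the inductive step — the reduced exponents produced along the way remain $\ge -1$, which is exactly the hypothesis needed to invoke the one-dimensional statement.

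For the base case $n=1$ I would use the classical monotone transport (inverse-distribution) argument. Writing $F=\int_\R f$ and $G=\int_\R g$, define increasing maps $u,v:[0,1]\to\R$ by $\int_{-\infty}^{u(s)}f=sF$ and $\int_{-\infty}^{v(s)}g=sG$, so that $u'(s)\,f(u(s))=F$ and $v'(s)\,g(v(s))=G$ for a.e.\ $s$. Setting $w(s)=(1-\lambda)u(s)+\lambda v(s)$, the map $w$ is increasing and the change of variables gives $\int_\R h\ge\int_0^1 h(w(s))\,w'(s)\,\dlat s$. Since $w(s)=(1-\lambda)u(s)+\lambda v(s)$, the hypothesis yields $h(w(s))\ge M_p\bigl(f(u(s)),g(v(s)),\lambda\bigr)$, while $w'(s)=(1-\lambda)F/f(u(s))+\lambda G/g(v(s))$. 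Thus the whole matter reduces to the pointwise inequality, for $a,b>0$,
\[
M_p(a,b,\lambda)\Bigl[(1-\lambda)\tfrac{F}{a}+\lambda\tfrac{G}{b}\Bigr]\ge M_{p/(p+1)}(F,G,\lambda),
\]
whose right-hand side is constant in $s$; integrating over $[0,1]$ then finishes the case. I expect this pointwise mean inequality — a Hölder/Minkowski-type relation between the means $M_p$, $M_{-1}$ and $M_{p/(p+1)}$ — to be the genuine computational heart of the argument, and it must be checked separately in the regimes $p\in(-1,0)\cup(0,\infty)$, $p=0$ (where it becomes the arithmetic–geometric mean inequality), $p=\infty$, and the boundary $p=-1$ (where $p/(p+1)$ is read as $-\infty$ and the right-hand side is $\min\{F,G\}$).

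For the inductive step, assume the inequality in dimension $n-1$ and write points of $\R^n$ as $(x',x_n)$. For fixed $x_n,y_n$ and $z_n=(1-\lambda)x_n+\lambda y_n$, the slice functions $f_{x_n}(\cdot)=f(\cdot,x_n)$, $g_{y_n}(\cdot)=g(\cdot,y_n)$, $h_{z_n}(\cdot)=h(\cdot,z_n)$ inherit the hypothesis $h_{z_n}\bigl((1-\lambda)x'+\lambda y'\bigr)\ge M_p\bigl(f_{x_n}(x'),g_{y_n}(y'),\lambda\bigr)$, so the $(n-1)$-dimensional case gives $H(z_n)\ge M_{p'}\bigl(F(x_n),G(y_n),\lambda\bigr)$, where $F(x_n)=\int_{\R^{n-1}}f_{x_n}$, and $G,H$ are the analogous fiber integrals and $p'=p/((n-1)p+1)$. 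Now $F,G,H$ satisfy the one-dimensional hypothesis with exponent $p'$, and since $p\ge -1/n$ forces $p'\ge -1$, the base case applies and yields $\int_\R H\ge M_{p'/(p'+1)}\bigl(\int_\R F,\int_\R G,\lambda\bigr)$. Fubini's theorem identifies $\int_\R H$, $\int_\R F$, $\int_\R G$ with $\int_{\R^n}h$, $\int_{\R^n}f$, $\int_{\R^n}g$, and a direct computation gives $p'/(p'+1)=p/(np+1)$, completing the induction. The only points demanding care beyond the base case are the measurability of the fiber integrals and the verification that $p'\ge -1$ and that the $(n-1)$-dimensional application is legitimate, both of which follow from $p\ge -1/n$ together with the monotonicity of the Möbius map $p\mapsto p/((n-1)p+1)$.
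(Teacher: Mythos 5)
Your proposal is correct, but there is nothing in the paper to compare it against: the paper states Theorem~\ref{t:BBL} without proof, citing \cite{BL}, \cite{Borell} and \cite{G} for detailed presentations, since the result is classical and is used here only as a black box (e.g.\ in Theorem~\ref{t:BM_lineal_med_1} and Lemma~\ref{l: p-BM}). What you have reconstructed is essentially the canonical argument from those references: the one-dimensional transport (inverse-distribution-function) proof combined with the means inequality $M_p(a,b,\lambda)\,M_1\bigl(\tfrac{F}{a},\tfrac{G}{b},\lambda\bigr)\geq M_{p/(p+1)}(F,G,\lambda)$ for $p\geq-1$, followed by induction on dimension via Fubini, with the M\"obius composition $p\mapsto p/((n-1)p+1)$ followed by $p'\mapsto p'/(p'+1)$ giving $p/(np+1)$ — all of your exponent bookkeeping checks out, including the boundary cases $p=-1/n$ (read as $M_{-\infty}$) and $p=\infty$. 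The only place where your sketch is thinner than the standard treatments is the base case: the identity $u'(s)f(u(s))=F$ a.e.\ is delicate when $f$ vanishes on parts of its support or lacks continuity ($u$ may then jump), and the usual remedy is to first prove the statement for, say, positive continuous $f,g$ of compact support and pass to the general case by approximation, or to argue carefully at Lebesgue points; your ``for a.e.\ $s$'' glosses over this, and also note that the change-of-variables step $\int_{\R}h\geq\int_0^1 h(w(s))\,w'(s)\,\dlat s$ holds with inequality (in the favorable direction) for any increasing a.e.-differentiable $w$, which is what saves the argument. These are standard repairs, not gaps in the strategy.
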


Now, in the spirit of Definition \ref{d:1} and with the above-mentioned goal in mind, we give the following definition.
\begin{definition}
We say that $(X,\di_X,\mu_X)$ satisfies the Borell-Brascamp-Lieb inequality with respect to $m=m(X)>0$ for the value $p\geq-1/m$, $\mathrm{BBL}(p,m)$ for short, if
for any $\lambda\in (0,1)$ and non-negative ($\mu_X$)-measurable
functions $f,g,h:X\longrightarrow\R_{\geq0}$ such that
\begin{equation}\label{e:BBLCondicion}
h(z)\geq M_p(f(x),g(y),\lambda),
\end{equation}
for all $x,y\in X$, and $z\in(1-\lambda)\{x\}\star\lambda\{y\}$,
then
\begin{equation*}
\int_{X}h\,\dlat\mu_X\geq
M_q\left(\int_{X}f\,\dlat\mu_X,\int_{X}g\,\dlat\mu_X,\lambda\right),
\end{equation*}
where $q=q(p,m)=\frac{p}{mp+1}$.

In the same way, a function $\phi:X\longrightarrow\R_{\geq0}$ is said to be $p$-concave, for $p\in\R\cup\{\pm\infty\}$, if
\begin{equation}\label{e:p-concave-Condicion}
\phi(z)\geq M_p(\phi(x),\phi(y),\lambda),
\end{equation}
for all $x,y\in X$, $z\in(1-\lambda)\{x\}\star\lambda\{y\}$, and any $\lambda\in(0,1)$.
\end{definition}

Taking a look at the result collected in Example \ref{ex:1.2} (cf.~also Theorem \ref{t:linBM_ProdMetSpa}),
at a first moment, one could think that
\begin{equation}\label{e:linGaussBM}
\gamma_n((1-\lambda)K+\lambda L)\geq(1-\lambda)\gamma_n(K)+\lambda\gamma_n(L)
\end{equation}
holds for convex bodies $K$, $L$ for which
\[K|H=L|H \quad \text{ or } \quad \max_{x\in H^\perp}\gamma_{n-1}\bigl(K\cap(x+H)\bigr)=\max_{x\in
H^\perp}\gamma_{n-1}\bigl(L\cap(x+H)\bigr),
\]
for a certain $H\in\L^n_{n-1}$. Considering once again $K=\bola$, $L=x_0+\bola$ (where $x_0$ is large enough),
one may observe that the above linear version of the Gaussian Brunn-Minkowski inequality does not hold under these assumptions. However, if one replaces the condition on a common maximal $(n-1)$-dimensional (Gaussian) measure section with a common maximal $(n-1)$-dimensional ``marginal measure'' section, one can show that \eqref{e:linGaussBM} holds. This is the content of the following theorem, in the more general setting of metric measure spaces (for an analytic version of it in $\R^n$ we refer the reader to \cite[Theorem~3.2]{DaUr} and the references therein). As a particular case, for $p=0$, Theorem \ref{t:BM_lin1_MetSpa} is obtained.
\begin{teor}\label{t:BM_lin2_MetSpa}
Let $(X,\di_X,\mu_X)$, $(Y,\di_Y,\mu_Y)$ be metric measure spaces which satisfy $\overline{\mathrm{BM}}(1)$ and $\mathrm{BBL}(p,m)$, respectively, with some $m>0$. Let $\mu$ be the measure on $X\times Y$ given by $\dlat\mu(x,y)=\phi(x,y)\dlat\mu_{X\times Y}$ where $\phi$ is a $p$-concave function, and $\mu_X, \mu_Y$ are $\sigma$-finite.
Let $\lambda\in(0,1)$ and let $A,B\subset X\times Y$ be non-empty measurable sets such that
$(1-\lambda)A\star\lambda B$ is also measurable and so that
\begin{equation*}
\sup_{x\in X}\int_Y\chi_{_A}(x,y)\phi(x,y)\,\dlat\mu_Y=\sup_{x\in X}\int_Y\chi_{_B}(x,y)\phi(x,y)\,\dlat\mu_Y.
\end{equation*}
Then
\begin{equation*}
\mu\bigl((1-\lambda)A\star\lambda B\bigr)\geq(1-\lambda)\mu(A)+\lambda\,\mu(B).
\end{equation*}
\end{teor}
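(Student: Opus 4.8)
The plan is to follow the template of the proof of Theorem~\ref{t:linBM_ProdMetSpa}, reducing the statement on $X\times Y$ to the hypothesis $\overline{\mathrm{BM}}(1)$ on $X$ by passing to suitable \emph{weighted marginal} functions on $X$; the novelty compared with Theorem~\ref{t:linBM_ProdMetSpa} is that the fiber inequality must now be extracted from $\mathrm{BBL}(p,m)$ on $Y$ together with the $p$-concavity of $\phi$. Concretely, I would set
\[
f(x)=\int_Y\chi_{_A}(x,y)\phi(x,y)\,\dlat\mu_Y(y),\qquad
g(x)=\int_Y\chi_{_B}(x,y)\phi(x,y)\,\dlat\mu_Y(y),
\]
and $h(x)=\int_Y\chi_{(1-\la)A\star\la B}(x,y)\phi(x,y)\,\dlat\mu_Y(y)$. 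By Fubini--Tonelli (Theorem~\ref{t:Fubini}, using $\sigma$-finiteness of $\mu_X,\mu_Y$) one has $\int_X f\,\dlat\mu_X=\mu(A)$, $\int_X g\,\dlat\mu_X=\mu(B)$ and $\int_X h\,\dlat\mu_X=\mu\bigl((1-\la)A\star\la B\bigr)$; moreover the hypothesis reads $\sup_{x\in X}f(x)=\sup_{x\in X}g(x)=:\alpha$, and the case $\alpha=0$ is trivial since then $\mu(A)=\mu(B)=0$, so I would assume $\alpha>0$.

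The key step is the fiberwise estimate: for $s,r\in X$ and any $t_\la\in(1-\la)\{s\}\star\la\{r\}$,
\[
h(t_\la)\geq M_q\bigl(f(s),g(r),\la\bigr)\geq\min\{f(s),g(r)\},\qquad q=\frac{p}{mp+1}.
\]
To obtain this I would fix $s,r,t_\la$ and apply $\mathrm{BBL}(p,m)$ on $Y$ to the functions $F=\chi_{A(s)}\phi(s,\cdot)$, $G=\chi_{B(r)}\phi(r,\cdot)$ and $H=\chi_{((1-\la)A\star\la B)(t_\la)}\,\phi(t_\la,\cdot)$, chosen so that $\int_Y F=f(s)$, $\int_Y G=g(r)$ and $\int_Y H=h(t_\la)$ exactly. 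The pointwise hypothesis $H(z)\geq M_p(F(y_1),G(y_2),\la)$ for $z\in(1-\la)\{y_1\}\star\la\{y_2\}$ holds because, when $y_1\in A(s)$ and $y_2\in B(r)$, one has $z\in(1-\la)A(s)\star\la B(r)\subset((1-\la)A\star\la B)(t_\la)$, hence $H(z)=\phi(t_\la,z)$, while $(t_\la,z)\in(1-\la)\{(s,y_1)\}\star\la\{(r,y_2)\}$ by the coordinatewise splitting of Proposition~\ref{p:starProdSpac}, so the $p$-concavity of $\phi$ yields $\phi(t_\la,z)\geq M_p(\phi(s,y_1),\phi(r,y_2),\la)=M_p(F(y_1),G(y_2),\la)$; in the remaining cases the right-hand side vanishes by the convention $M_p(a,b,\la)=0$ when $ab=0$. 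Thus $\mathrm{BBL}(p,m)$ gives $h(t_\la)=\int_Y H\geq M_q(f(s),g(r),\la)$, and only the trivial bound $M_q\geq\min$ is used downstream, which is why the exponent $q$ never surfaces in the final linear inequality.

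From the fiberwise bound the superlevel sets satisfy $\{h\geq t\}\supset(1-\la)\{f\geq t\}\star\la\{g\geq t\}$, and for $0<t<\alpha$ both $\{f\geq t\}$ and $\{g\geq t\}$ are non-empty (this is where the equal-supremum condition enters), so $\overline{\mathrm{BM}}(1)$ on $X$ gives
\[
\mu_X(\{h\geq t\})\geq(1-\la)\mu_X(\{f\geq t\})+\la\mu_X(\{g\geq t\}).
\]
Integrating in $t$ over $(0,\alpha)$ with Cavalieri's principle (Theorem~\ref{t:Fubini}), and using that $\{f\geq t\}=\{g\geq t\}=\emptyset$ for $t>\alpha$ so that the truncated integrals recover the full masses $\mu(A)$ and $\mu(B)$, I would conclude $\mu\bigl((1-\la)A\star\la B\bigr)=\int_X h\geq(1-\la)\mu(A)+\la\mu(B)$, which is the claim.

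The main obstacle I anticipate is precisely the fiberwise step, namely packaging the data so that $\mathrm{BBL}(p,m)$ applies: one must recognize that the $p$-concavity of $\phi$ on the product, read along the coordinatewise-split combinations furnished by Proposition~\ref{p:starProdSpac}, is exactly the pointwise hypothesis feeding into $\mathrm{BBL}(p,m)$ on the $Y$-fiber. A secondary technical point is measurability of $H$: taking $H$ to be the indicator of the section of the (measurable) set $(1-\la)A\star\la B$ times $\phi(t_\la,\cdot)$ keeps $H$ measurable for a.e.\ $t_\la$ while still dominating the required $p$-mean, exactly as BM$(p)$ is applied to the measurable section in the proof of Theorem~\ref{t:linBM_ProdMetSpa}. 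The remaining ingredients—the inclusion of sections, the superlevel-set inclusion, and the Cavalieri truncation—are routine adaptations of that proof, and the specialization $p=0$ (where $\mathrm{BBL}(0,m)=\mathrm{PL}$ and $0$-concave means log-concave) recovers Theorem~\ref{t:BM_lin1_MetSpa}.
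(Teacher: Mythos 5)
Your proposal is correct and follows essentially the same route as the paper's proof: you define the same marginal functions, obtain the fiberwise bound $h(t_\lambda)\geq M_q\bigl(f(s),g(r),\lambda\bigr)$ by feeding the $p$-concavity of $\phi$ (read through the coordinatewise splitting of Proposition~\ref{p:starProdSpac} and the section inclusion $(1-\lambda)A(s)\star\lambda B(r)\subset\bigl((1-\lambda)A\star\lambda B\bigr)(t_\lambda)$) into $\mathrm{BBL}(p,m)$ on $Y$, and then conclude via the superlevel-set inclusion, $\overline{\mathrm{BM}}(1)$ on $X$, and the Cavalieri truncation at $\alpha$, exactly as the paper does (it defers this last stage to the end of the proof of Theorem~\ref{t:linBM_ProdMetSpa}). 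The only differences are expository, e.g.\ your explicit handling of the degenerate cases $\alpha=0$ and $F(y_1)G(y_2)=0$ and of the a.e.\ measurability of sections, which the paper leaves implicit.
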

\begin{proof}
Let $f,g,h:X\times Y\longrightarrow\R_{\geq0}$ be the functions given by
$f=\chi_{_A}\phi$, $g=\chi_{_B}\phi$ and $h=\chi_{_{(1-\lambda)A\star\lambda B}}\phi$.

Given $x_1,y_1\in X$ and $z_1\in(1-\lambda)\{x_1\}\star\lambda\{y_1\}$, let
$f_{x_1},g_{y_1},h_{z_1}:Y\longrightarrow\R_{\geq0}$ and $F,G,H:X\longrightarrow\R_{\geq0}$ given by
$f_{x_1}(\cdot)=f(x_1,\cdot)$, $g_{y_1}(\cdot)=g(y_1,\cdot)$, $h_{z_1}(\cdot)=h(z_1,\cdot)$,
$F(x_1)=\int_Y f_{x_1}\dlat\mu_Y$, $G(y_1)=\int_Y g_{y_1}\dlat\mu_Y$, $H(z_1)=\int_Y h_{z_1}\dlat\mu_Y$.

From $\eqref{e:p-concave-Condicion}$, we have
\begin{equation*}
h_{z_1}(z_2)\geq M_p(f_{x_1}(x_2),g_{y_1}(y_2),\lambda),
\end{equation*}
for all $x_2,y_2\in Y$, and $z_2\in(1-\lambda)\{x_2\}\star\lambda\{y_2\}$. Thus, by the BBL inequality in $Y$ we obtain
\begin{equation}\label{e:FGH}
H(z_1)\geq
M_{q}\left(F(x_1),G(y_1),\lambda\right),
\end{equation}
where $q=q(p,m)=\frac{p}{mp+1}$.

By hypothesis $\sup_{x\in X}F(x)=\sup_{x\in X}G(x)=:\alpha$ and, from \eqref{e:FGH},
\begin{equation*}
\{x\in X: H(x)\geq t\}\supset(1-\lambda)\{x\in X:F(x)\geq t\}\star\lambda\{x\in X: G(x)\geq t\}
\end{equation*}
for all $0<t<\alpha$.
The proof now concludes as at the end of the proof of Theorem \ref{t:linBM_ProdMetSpa}.
\end{proof}

As a consequence of the above result, and since the Gaussian density is log-concave, we have that \eqref{e:linGaussBM} holds under the assumption of a common maximal marginal measure section. This fact will be collected in Theorem \ref{t:BM_lineal_med_2}.

We finish this section by collecting some results for product spaces in the spirit of the precedent ones,
and which will be useful in order to derive some further inequalities, as it will be shown in Section \ref{s:applications}.

\begin{teor}
Let $m_1, m_2>0$ and $p\geq-1/(m_1+m_2)$. If $(Y,\di_Y,\mu_Y)$ satisfies $\mathrm{BBL}(p,m_2)$ and $(X,\di_X,\mu_X)$ satisfies $\mathrm{BBL}(q,m_1)$ where $q=q(p,m_2)$, then $(X\times Y,\di_{X\times Y},\mu_{X\times Y})$ satisfies $\mathrm{BBL}(p,m_1+m_2)$.
\end{teor}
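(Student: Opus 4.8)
The plan is to iterate the Borell--Brascamp--Lieb hypotheses of the two factors through a slicing argument, exactly in the spirit of the proof of Theorem~\ref{t:BM_lin2_MetSpa}. Fix $\lambda\in(0,1)$ and non-negative measurable functions $f,g,h:X\times Y\longrightarrow\R_{\geq0}$ with $h(z)\geq M_p(f(x),g(y),\lambda)$ for all $x,y\in X\times Y$ and every $z\in(1-\lambda)\{x\}\star\lambda\{y\}$. The first observation I would record is that, by Proposition~\ref{p:starProdSpac}, the operation $\star$ on $X\times Y$ splits coordinatewise: writing $x=(x_1,x_2)$, $y=(y_1,y_2)$, $z=(z_1,z_2)$, the condition $z\in(1-\lambda)\{x\}\star\lambda\{y\}$ is equivalent to $z_1\in(1-\lambda)\{x_1\}\star\lambda\{y_1\}$ in $X$ and $z_2\in(1-\lambda)\{x_2\}\star\lambda\{y_2\}$ in $Y$ simultaneously.

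With this in hand, I would first integrate over $Y$. For fixed $x_1,y_1\in X$ and $z_1\in(1-\lambda)\{x_1\}\star\lambda\{y_1\}$, I consider the $Y$-slices $f_{x_1}(\cdot)=f(x_1,\cdot)$, $g_{y_1}(\cdot)=g(y_1,\cdot)$, $h_{z_1}(\cdot)=h(z_1,\cdot)$. The coordinatewise splitting makes these slices satisfy $h_{z_1}(z_2)\geq M_p(f_{x_1}(x_2),g_{y_1}(y_2),\lambda)$ whenever $z_2\in(1-\lambda)\{x_2\}\star\lambda\{y_2\}$, so $\mathrm{BBL}(p,m_2)$ in $Y$ would give
\begin{equation*}
H(z_1)\geq M_{q_2}\bigl(F(x_1),G(y_1),\lambda\bigr),\qquad q_2=q(p,m_2)=\frac{p}{m_2p+1},
\end{equation*}
where $F(x_1)=\int_Y f_{x_1}\,\dlat\mu_Y$, $G(y_1)=\int_Y g_{y_1}\,\dlat\mu_Y$ and $H(z_1)=\int_Y h_{z_1}\,\dlat\mu_Y$ are measurable on $X$ by Fubini--Tonelli's theorem (Theorem~\ref{t:Fubini}). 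Here $q_2$ is precisely the parameter $q=q(p,m_2)$ appearing in the hypothesis on $X$. Since this functional inequality holds for all admissible $x_1,y_1,z_1$, the triple $(F,G,H)$ satisfies the pointwise hypothesis of $\mathrm{BBL}(q_2,m_1)$ in $X$.

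Next I would apply $\mathrm{BBL}(q_2,m_1)$ in $X$ and use Fubini--Tonelli once more to rewrite the three outer integrals as $\int_{X\times Y}h$, $\int_{X\times Y}f$ and $\int_{X\times Y}g$, reducing everything to identifying the resulting exponent. A direct computation gives
\begin{equation*}
q(q_2,m_1)=\frac{q_2}{m_1q_2+1}=\frac{p}{(m_1+m_2)p+1}=q(p,m_1+m_2),
\end{equation*}
which is exactly the exponent required for $\mathrm{BBL}(p,m_1+m_2)$ on $X\times Y$, finishing the chain.

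The argument is a routine two-step Fubini iteration, so the one point genuinely demanding care is that each intermediate application of the Borell--Brascamp--Lieb inequality lies in its admissible range. The inner application needs $p\geq -1/m_2$, which is weaker than and hence implied by the standing hypothesis; the delicate one is that the outer application needs $q_2\geq -1/m_1$. I expect this to be the crux: clearing the positive denominator $m_2p+1>0$, the requirement $q_2\geq -1/m_1$ is equivalent to $(m_1+m_2)p+1\geq0$, i.e.\ to exactly $p\geq -1/(m_1+m_2)$. Thus the stated range of $p$ is precisely what legitimizes the composition, with the endpoint $p=-1/(m_1+m_2)$ forcing $q_2=-1/m_1$ and degenerating the output exponent to $-\infty$ (the minimum), read off by continuity as usual. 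The remaining technical wrinkle, the measurability of the $Y$-slices (which holds for $\mu_X$-a.e.\ $x_1$ by Fubini--Tonelli and suffices, since altering $F,G,H$ on null sets changes none of the integrals), I would dispatch exactly as in the proof of Theorem~\ref{t:BM_lin2_MetSpa}.
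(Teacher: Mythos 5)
Your proof is correct and follows essentially the same route as the paper: slice over $Y$, apply $\mathrm{BBL}(p,m_2)$ to the slices $f_{x_1},g_{y_1},h_{z_1}$ to get the hypothesis for $F,G,H$ with exponent $q=p/(m_2p+1)$, then apply $\mathrm{BBL}(q,m_1)$ in $X$ and compose the exponents via $q(q,m_1)=p/\bigl((m_1+m_2)p+1\bigr)$. The only difference is cosmetic: where the paper merely asserts $q\geq-1/m_1$ in passing, you verify explicitly that this is equivalent to the standing hypothesis $p\geq-1/(m_1+m_2)$, and you spell out the coordinatewise splitting of $\star$ from Proposition~\ref{p:starProdSpac}, both of which the paper leaves implicit.
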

\begin{proof}
Let $\lambda\in(0,1)$ and let $f,g,h:X\times Y\longrightarrow\R_{\geq0}$ be measurable functions satisfying $\eqref{e:BBLCondicion}$ for all $x,y\in X\times Y$, and $z\in(1-\lambda)\{x\}\star\lambda\{y\}$.

Arguing in the same way as in the above proof (and using the same notation), by the BBL inequality in $Y$ (we notice that $p\geq-1/(m_1+m_2)\geq-1/m_2$) we obtain
\begin{equation*}
H(z_1)\geq
M_{q}\left(F(x_1),G(y_1),\lambda\right),
\end{equation*}
where $q=q(p,m_2)=\frac{p}{m_2p+1}\geq-\frac{1}{m_1}$.

Now, from the BBL inequality in $X$ we get
\begin{equation*}
\begin{split}
\int_{X\times Y}h\,\dlat\mu_{X\times Y}=\int_{X}H\,\dlat\mu_X
&\geq M_{\tilde{q}}\left(\int_{X}F\,\dlat\mu_X,\int_{X}G\,\dlat\mu_X,\lambda\right)\\
&=M_{\tilde{q}}\left(\int_{X\times Y}f\,\dlat\mu_{X\times Y},\int_{X\times Y}g\,\dlat\mu_{X\times Y},\lambda\right),
\end{split}
\end{equation*}
where $\tilde{q}=\tilde{q}(q,m_1)=\frac{q}{m_1q+1}=\frac{p}{(m_1+m_2)p+1}$. It finishes the proof.
\end{proof}

As straightforward consequences of the above result we get the following corollaries.
\begin{corollary}\label{c:ndimBBL}
Let $m>0$ and $p\geq-1/(nm)$, where $n\in\Z_{>0}$. If $(X,d_X,\mu_X)$ satisfies $\mathrm{BBL}(p/(rmp+1),m)$, for $r=0,1,\dots,n-1$, then $(X^n,d_{X^n},\mu_{X^n})$ satisfies $\mathrm{BBL}(p,mn)$.
\end{corollary}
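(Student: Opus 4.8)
The plan is to prove the statement by induction on the number of factors, using the preceding two-factor Borell--Brascamp--Lieb theorem as the single inductive step; the whole content reduces to an algebraic bookkeeping of the exponents. For $r=0,1,\dots,n-1$ write $p_r=\frac{p}{rmp+1}$, so that the hypotheses read: $(X,\di_X,\mu_X)$ satisfies $\mathrm{BBL}(p_r,m)$ for each such $r$, with $p_0=p$. The quantity that governs the recursion is the map $q(p,m)=\frac{p}{mp+1}$ appearing in the two-factor theorem, and the key identity I would isolate first is
\begin{equation*}
q(p,mk)=\frac{p}{mk\,p+1}=p_k,
\end{equation*}
valid for every admissible $k$. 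This is exactly the compatibility needed to feed the output of one application of the theorem into the next.

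With this in hand, I would prove by induction on $k$ the statement $H(k)$: the product $(X^k,\di_{X^k},\mu_{X^k})$ satisfies $\mathrm{BBL}(p,mk)$. The base case $H(1)$ is the hypothesis with $r=0$, since $p_0=p$. For the inductive step $H(k)\Rightarrow H(k+1)$ (with $1\le k\le n-1$), I write $X^{k+1}=X\times X^k$ and apply the two-factor theorem with $Y=X^k$, $m_2=mk$ and the new single factor $X$, $m_1=m$. Its two hypotheses are met: the factor $X^k$ satisfies $\mathrm{BBL}(p,mk)$ by $H(k)$, while the single factor $X$ must satisfy $\mathrm{BBL}(q(p,mk),m)=\mathrm{BBL}(p_k,m)$, which is precisely the given hypothesis with $r=k$ (here $1\le k\le n-1$). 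The conclusion of the theorem then yields that $X^{k+1}$ satisfies $\mathrm{BBL}(p,m_1+m_2)=\mathrm{BBL}(p,m(k+1))$, i.e.\ $H(k+1)$. Taking $k=n$ gives $H(n)$, the assertion. Thus the base case together with the $n-1$ inductive steps uses exactly the hypotheses indexed by $r=0,1,\dots,n-1$.

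Two routine checks remain, neither of which I expect to be a serious obstacle. First, the admissibility condition $p\ge-1/(m_1+m_2)=-1/(m(k+1))$ required by the two-factor theorem at stage $k$: this follows from $p\ge-1/(nm)$ together with $k+1\le n$, which give $-1/(m(k+1))\le-1/(nm)\le p$; the same estimate shows each $p_k\ge-1/m$, so every instance of $\mathrm{BBL}$ invoked is well posed. Second, the identification $X^{k+1}=X\times X^k$ as metric measure spaces relies on the associativity of the Euclidean combination of the coordinate distances (so that $\di_{X^{k+1}}$ agrees with the product metric built from $\di_X$ and $\di_{X^k}$) and on the fact that a product of $\sigma$-finite measures is again $\sigma$-finite, so the measure-theoretic hypotheses of the theorem persist at every stage. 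The only genuinely computational point is the exponent identity $q(p,mk)=p_k$, which is where all the arithmetic lives; once it is verified the induction is immediate.
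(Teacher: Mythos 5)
Your proof is correct and is precisely the argument the paper intends: the corollary is stated there as a ``straightforward consequence'' of the preceding two-factor theorem, and your induction on $k$ with $Y=X^k$, $m_2=mk$, $m_1=m$, hinging on the identity $q(p,mk)=p/(kmp+1)=p_k$ and the admissibility checks $p\geq-1/(m(k+1))$ and $p_k\geq-1/m$, is exactly that iteration. The only blemish is the phrase ``Taking $k=n$ gives $H(n)$'': the last inductive step is $k=n-1$, which already yields $H(n)$ -- a harmless slip of wording, not a gap.
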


\begin{corollary}
Let $(X,\di_X,\mu_X)$ and $(Y,\di_Y,\mu_Y)$ be metric measure spaces which satisfy $\mathrm{PL}$.
Then $(X\times Y,\di_{X\times Y},\mu_{X\times Y})$ also satisfies $\mathrm{PL}$. In particular, the space
$(X^n,d_{X^n},\mu_{X^n})$ satisfies $\mathrm{PL}$.
\end{corollary}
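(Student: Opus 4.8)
The plan is to recognize that the Pr\'ekopa-Leindler inequality is exactly the case $p=0$ of the Borell-Brascamp-Lieb inequality, and then to read the corollary off the preceding theorem by specializing to $p=0$.

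First I would record the key identification. For $p=0$ the mean $M_0(a,b,\lambda)=a^{1-\lambda}b^\lambda$ is the weighted geometric mean, and the output exponent produced by $\mathrm{BBL}(0,m)$ is $q(0,m)=\frac{0}{m\cdot 0+1}=0$ for \emph{every} $m>0$. Hence the defining hypothesis $h(z)\ge M_0(f(x),g(y),\lambda)$ and conclusion $\int_X h\,\dlat\mu_X\ge M_0\bigl(\int_X f\,\dlat\mu_X,\int_X g\,\dlat\mu_X,\lambda\bigr)$ of $\mathrm{BBL}(0,m)$ coincide verbatim with the defining hypothesis and conclusion of $\mathrm{PL}$. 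Therefore, for any $m>0$, a metric measure space satisfies $\mathrm{PL}$ if and only if it satisfies $\mathrm{BBL}(0,m)$; crucially, the value of the parameter $m$ is irrelevant when $p=0$, since it neither enters the admissibility condition ($0\ge -1/m$ always holds) nor the output exponent ($q(0,m)=0$).

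Next I would apply the preceding theorem with $p=0$ and any choice $m_1,m_2>0$. The admissibility condition $p\ge -1/(m_1+m_2)$ is immediate since $0>-1/(m_1+m_2)$. By the equivalence above, the hypothesis that $Y$ satisfies $\mathrm{BBL}(0,m_2)$ is precisely that $Y$ satisfies $\mathrm{PL}$; and because $q=q(0,m_2)=0$, the hypothesis that $X$ satisfies $\mathrm{BBL}(q,m_1)=\mathrm{BBL}(0,m_1)$ is precisely that $X$ satisfies $\mathrm{PL}$. The theorem then yields that $X\times Y$ satisfies $\mathrm{BBL}(0,m_1+m_2)$, which, by the same equivalence (now applied with $m=m_1+m_2$), is nothing but $\mathrm{PL}$ on $X\times Y$. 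This establishes the first assertion. For the ``in particular'' statement I would argue by induction on $n$ (or directly invoke Corollary~\ref{c:ndimBBL} with $p=0$, where each hypothesis $\mathrm{BBL}(0/(rm\cdot 0+1),m)=\mathrm{BBL}(0,m)$ reduces to $\mathrm{PL}$): the base case $n=1$ is the assumption, and if $X^{n-1}$ satisfies $\mathrm{PL}$, then applying the first part to the pair $X^{n-1}$ and $X$ gives that $X^n=X^{n-1}\times X$ satisfies $\mathrm{PL}$.

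I expect no substantive obstacle. The whole content is the identification $\mathrm{PL}=\mathrm{BBL}(0,m)$, and the only points demanding a careful word are that $q(0,m_2)=0$, so that both factor spaces are required to satisfy the \emph{same} ($p=0$) inequality, and that the output parameter $m_1+m_2$ is immaterial at $p=0$, so the conclusion $\mathrm{BBL}(0,m_1+m_2)$ can be read back as $\mathrm{PL}$ with no constraint matching required.
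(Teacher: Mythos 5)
Your proof is correct and follows exactly the route the paper intends: the corollary is stated there as a ``straightforward consequence'' of the preceding theorem, obtained precisely by identifying $\mathrm{PL}$ with $\mathrm{BBL}(0,m)$ (for any $m>0$, since $M_0$ is the geometric mean and $q(0,m)=0$) and specializing the theorem to $p=0$, with the power $X^n$ handled by induction or by Corollary~\ref{c:ndimBBL} at $p=0$. Your explicit remarks that the parameter $m$ is immaterial at $p=0$ and that $q(0,m_2)=0$ forces both factors to satisfy the same inequality are exactly the points that make the paper's omitted verification work.
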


\section{Brunn-Minkowski inequalities for product measures on $\R^n$}\label{s:BM_Rn}

We start this section by showing Theorem \ref{t:BM_prod_quasi-conc} which, as mentioned in the introduction, allows us to assert that the Gaussian Brunn-Minkowski inequality \eqref{e:GaussBM} holds for the more general case of \emph{weakly unconditional sets}.
We present a simple proof of it, based on a direct application of Corollary \ref{c1} and Example \ref{ex:2}.

\begin{proof}[Proof of Theorem \ref{t:BM_prod_quasi-conc}]
Without loss of generality, we may assume that $\mu$ is not the zero measure.
Hence, since $\phi_i$ is positively decreasing for $i=1,\dots,n$, the measure $\mu_i$ is strictly positive around the origin and locally finite, and we get that the product measure $\mu$ is a Radon measure. Thus, by the proof of Proposition~\ref{p:BMzeromeas_sets}, since $0\in A\cap B$, we may assume that the sets $A, B$ satisfy $\mu(A)\mu(B)>0$.
Furthermore, as at the beginning of the proof of Theorem \ref{t:BM_ProdMetSpa}, we may assume that $A, B$ are compact.

The theorem now follows from recursively applying Corollary \ref{c1} with $X=\R$ and $Y=\R^m$, $m=1,\dots,n-1$, taking the families of weakly unconditional sets  $\mathcal{F}_X$, $\mathcal{F}_Y$ in $\R$ and $\R^m$. The one-dimensional case was proved in Example~\ref{ex:2}. We notice that conditions ii) and iii) of Corollary \ref{c1} are satisfied because of the definition of weakly unconditional sets.
\end{proof}

\begin{remark}\label{r:GaussBM}
We would like to point out some facts regarding the necessity of the conditions in Theorem \ref{t:BM_prod_quasi-conc} as well as the consequences of this result in relation to the Gaussian Brunn-Minkowski inequality \eqref{e:GaussBM}.

\smallskip

i) Taking into account the above-mentioned negative result from \cite{NaTk} one may think that the weakly unconditional case is the strongest one that may be expected regarding Conjecture \ref{conjecture}.
Indeed, since both sets in \eqref{e:ContraejGBM} contain the origin and furthermore their projection onto the $y$-axis, the sole ``missing points'' which make them impossible for the sets to be weakly unconditional are those belonging to the $x$-axis; however, the Gaussian Brunn-Minkowski inequality is not true for such sets.

\smallskip

ii) The assumption on the measure in Theorem \ref{t:BM_prod_quasi-conc} of being a product measure is needed, as it is shown in \cite[Example~1]{LiMaNaZv} via the density $\varphi(x)=\frac{1}{2}\chi_{_{2C}}(x)+\frac{1}{2}\chi_{_{C}}(x)$, where $C$ is the square
$\{(x,y)\in\R^2: \,\enorm{x}\leq1, \enorm{y}\leq1\}$.

\smallskip

iii) Furthermore, the assumption on the density functions in Theorem \ref{t:BM_prod_quasi-conc} of being positively decreasing is needed, as Proposition \ref{p:linBMimplies_quasi} shows.
\end{remark}

As a straightforward consequence of Theorem \ref{t:BM_prod_quasi-conc} together with the fact that $(r+s)C=rC+sC$ for any convex set $C\subset\R^n$ and any $r,s\geq0$, we get the following result.
\begin{corollary}\label{c:1/n-concavity}
Let $\mu=\mu_1\times\dots\times\mu_n$ be a product measure on $\R^n$ such that $\mu_i$ is the
measure given by $\dlat\mu_i(x)=\phi_i(x)\,\dlat x$,
where $\phi_i:\R\longrightarrow\R_{\geq0}$ is a positively decreasing function, $i=1,\dots,n$.

Let $\emptyset\neq A,B\subset\R^n$ be weakly unconditional convex sets. Then
the functions $t\mapsto\mu(A+tB)^{1/n}$ and $t\mapsto\mu(tA)^{1/n}$ are concave on $[0,+\infty)$.
\end{corollary}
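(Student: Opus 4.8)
The plan is to reduce the concavity of both maps to a single application of Theorem~\ref{t:BM_prod_quasi-conc}, exploiting the identity $(r+s)C=rC+sC$ valid for convex $C$ and $r,s\ge0$. I focus first on $f(t)=\mu(A+tB)^{1/n}$. Fix $t_0,t_1\ge0$ and $\lambda\in(0,1)$, and set $t_\lambda=(1-\lambda)t_0+\lambda t_1$; the goal is the inequality
\[
f(t_\lambda)\geq(1-\lambda)f(t_0)+\lambda f(t_1).
\]

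The crucial step is the set identity
\[
(1-\lambda)(A+t_0B)+\lambda(A+t_1B)=A+t_\lambda B.
\]
To see this, I regroup the left-hand side, using commutativity and associativity of Minkowski addition and distributivity of nonnegative scalars, as $\bigl((1-\lambda)A+\lambda A\bigr)+\bigl((1-\lambda)t_0B+\lambda t_1B\bigr)$. Since $A$ is convex, the first summand equals $\bigl((1-\lambda)+\lambda\bigr)A=A$, and since $B$ is convex with $(1-\lambda)t_0,\lambda t_1\ge0$, the second equals $\bigl((1-\lambda)t_0+\lambda t_1\bigr)B=t_\lambda B$, which gives the claim.

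Next I would check that Theorem~\ref{t:BM_prod_quasi-conc} applies to the pair $A+t_0B$, $A+t_1B$. Each of these sets is convex, being a Minkowski sum of convex sets, hence Lebesgue measurable, and so is their $\lambda$-combination $A+t_\lambda B$. Moreover each is weakly unconditional: if $p=(p_1,\dots,p_n)=a+t_0b$ with $a\in A$, $b\in B$ and $(\epsilon_1,\dots,\epsilon_n)\in\{0,1\}^n$, then
\[
(\epsilon_1p_1,\dots,\epsilon_np_n)=(\epsilon_1a_1,\dots,\epsilon_na_n)+t_0(\epsilon_1b_1,\dots,\epsilon_nb_n)
\]
lies in $A+t_0B$, since $(\epsilon_1a_1,\dots,\epsilon_na_n)\in A$ and $(\epsilon_1b_1,\dots,\epsilon_nb_n)\in B$ by the weak unconditionality of $A$ and $B$ (and likewise for $t_1$). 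Applying Theorem~\ref{t:BM_prod_quasi-conc} to $A+t_0B$ and $A+t_1B$ and using the identity above then yields exactly $f(t_\lambda)\geq(1-\lambda)f(t_0)+\lambda f(t_1)$, establishing concavity of $f$ on $[0,\infty)$.

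Finally, the concavity of $t\mapsto\mu(tA)^{1/n}$ follows as the special case of the first assertion obtained by replacing $B$ with $A$ and $A$ with the singleton $\{0\}$: indeed $\{0\}$ is a weakly unconditional convex set and $\{0\}+tA=tA$, so $t\mapsto\mu(\{0\}+tA)^{1/n}=\mu(tA)^{1/n}$ is concave. I do not expect a genuine obstacle here; the only points requiring care are the bookkeeping that Minkowski sums and nonnegative dilates of weakly unconditional convex sets remain weakly unconditional and convex (so that the hypotheses of Theorem~\ref{t:BM_prod_quasi-conc} are met) and, if one wishes to be fully rigorous about degenerate cases, the observation that every nonempty weakly unconditional set contains the origin, which controls the situation in which one of the measures vanishes.
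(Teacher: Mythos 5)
Your proof is correct and follows essentially the same route as the paper, which derives this corollary precisely as ``a straightforward consequence of Theorem~\ref{t:BM_prod_quasi-conc} together with the fact that $(r+s)C=rC+sC$ for any convex set $C\subset\R^n$ and any $r,s\geq0$'' --- exactly the reduction you carry out via the identity $(1-\lambda)(A+t_0B)+\lambda(A+t_1B)=A+t_\lambda B$. Your explicit checks (Minkowski sums and nonnegative dilates of weakly unconditional convex sets remain weakly unconditional and convex, and the specialization to the pair $\bigl(\{0\},A\bigr)$ for the map $t\mapsto\mu(tA)^{1/n}$) are just the details the paper leaves implicit.
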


\medskip

As it occurs in the Euclidean setting, we can deduce an isoperimetric type inequality as a consequence of \eqref{e:BM_prod_quasi-conc}. To this aim, we will introduce some notation.
Let
\[\W^{\mu}_1(A;B)=\V^{\mu}(A[n-1],B[1])=\frac{1}{n}\liminf_{t\to0^+}\frac{\mu(A+tB)-\mu(A)}{t}\]
be the first \emph{quermassintegral} of $A$ with respect to the set $B$ associated to the measure $\mu$. Here we are assuming that $A$ and $B$ are measurable sets such that $A+tB$ is so.

In a similar way we may define
\[\mu^+(A)=\liminf_{t\to0^+}\frac{\mu(A+t\bola)-\mu(A)}{t},\]
the surface area measure associated to $\mu$. Clearly, $\mu^+(A)=n\W^{\mu}_1(A;\bola)$.

Moreover, and following the notation of \cite{LiMaNaZv}, we will write \[M_\mu(A)=n\mu(A)-\deriv{-}{1}\mu(tA),\] provided that the pair $(A,\mu)$ is so that the above (left) derivative exists.
Clearly, $M_{\vol}(A)=0$ for any measurable set $A$ and thus this functional does not appear in the classical isoperimetric inequality. For more information about the role of this functional in the literature, we refer the reader to \cite{LiMaNaZv} and the references therein.

With this notation, we get an isoperimetric type inequality in Theorem~\ref{t:isop}. This result was previously obtained in \cite[Corollary~4]{LiMaNaZv}, without the equality case, in the setting of Theorem \ref{t:LiMaNaZv}.
The main idea of the proof we present here goes back to the classical proof of the Minkowski first inequality that can be found in \cite[Theorem~7.2.1]{Sch2}.
\begin{teor}\label{t:isop}
Let $\mu=\mu_1\times\dots\times\mu_n$ be a product measure on $\R^n$ such that $\mu_i$ is the
measure given by $\dlat\mu_i(x)=\phi_i(x)\,\dlat x$,
where $\phi_i:\R\longrightarrow\R_{\geq0}$ is a positively decreasing function, $i=1,\dots,n$.

Let $A,B\subset\R^n$ be non-empty weakly unconditional convex sets such that
$\mu\bigl((1-\lambda)A+\lambda B\bigr)<+\infty$ for all $\lambda\in[0,1]$. Then
\begin{equation}\label{e:isop1}
\W^{\mu}_1(A;B)+\frac{1}{n}M_\mu(A)\geq\mu(A)^{1-1/n}\mu(B)^{1/n},
\end{equation}
with equality if $A=B$.

In particular, for any $r>0$,
\begin{equation}\label{e:isop2}
r\mu^+(A)+ M_\mu(A)\geq n\mu(A)^{1-1/n}\mu(r\bola)^{1/n},
\end{equation}
with equality if $A=r\bola$.
\end{teor}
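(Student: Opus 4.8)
The plan is to adapt the classical proof of Minkowski's first inequality (\cite[Theorem~7.2.1]{Sch2}): I would build a single concave function out of $\mu$ along the linear deformation carrying $A$ into $B$, and then read off the inequality by comparing the value and the one-sided derivative of that function at the endpoints. First I dispose of the degenerate cases: if $\mu(A)\mu(B)=0$ the right-hand side of \eqref{e:isop1} vanishes and there is nothing to prove, so I may assume $\mu(A),\mu(B)>0$. Recall that $A,B$ are weakly unconditional, hence $0\in A\cap B$, and that convexity gives $\alpha A+\beta A=(\alpha+\beta)A$ for all $\alpha,\beta\ge0$, a fact I will use repeatedly.

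The core object is $\psi(t)=\mu\bigl((1-t)A+tB\bigr)^{1/n}$ on $[0,1]$, with $\psi(0)=\mu(A)^{1/n}$ and $\psi(1)=\mu(B)^{1/n}$. I claim $\psi$ is concave. Indeed, for $t_1,t_2\in[0,1]$ and $\theta\in(0,1)$, writing $C_i=(1-t_i)A+t_iB$ (which are again weakly unconditional and convex), the convexity of $A$ and $B$ yields
\[
\bigl(1-((1-\theta)t_1+\theta t_2)\bigr)A+\bigl((1-\theta)t_1+\theta t_2\bigr)B=(1-\theta)C_1+\theta C_2,
\]
so applying Theorem~\ref{t:BM_prod_quasi-conc} to the pair $C_1,C_2$ gives $\psi\bigl((1-\theta)t_1+\theta t_2\bigr)\ge(1-\theta)\psi(t_1)+\theta\psi(t_2)$. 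Concavity on $[0,1]$ then forces the right derivative at $0$ to dominate every chord slope, in particular $\psi'(0^+)\ge\psi(1)-\psi(0)$. Setting $P(t)=\mu\bigl((1-t)A+tB\bigr)$ and using $\psi=P^{1/n}$, this reads
\[
\tfrac{1}{n}\mu(A)^{1/n-1}\,P'(0^+)\ \ge\ \mu(B)^{1/n}-\mu(A)^{1/n}.
\]

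The decisive step, which I expect to be the main obstacle, is to identify $P'(0^+)$ with the geometric quantities appearing in the statement, namely to prove
\[
P'(0^+)=n\,\W^{\mu}_1(A;B)+M_\mu(A)-n\mu(A).
\]
The idea is to split the deformation via $(1-t)A+tB=(1-t)\bigl(A+\tfrac{t}{1-t}B\bigr)$, separating a \emph{Minkowski-addition} effect (adding $\tfrac{t}{1-t}B\approx tB$, whose first-order contribution is $\frac{\dlat}{\dlat t}\big|_{0^+}\mu(A+tB)=n\W^{\mu}_1(A;B)$) from a \emph{scaling} effect (shrinking $A$ by the factor $1-t$, whose contribution is $-\deriv{-}{1}\mu(tA)=M_\mu(A)-n\mu(A)$). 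The existence of all three one-sided derivatives as genuine limits is guaranteed by the $1/n$-concavity of $t\mapsto\mu(A+tB)^{1/n}$ and $t\mapsto\mu(tA)^{1/n}$ from Corollary~\ref{c:1/n-concavity}; the delicate part is justifying that the derivative of $P$ splits additively into these two pieces with no interaction term, which is precisely where the regularity of the densities $\phi_i$ must be used (a direct first-order expansion of $P$ for convex bodies gives the boundary-integral formulas that make the two contributions decouple). Granting this identity, I substitute it into the concavity inequality above and multiply through by $n\mu(A)^{1-1/n}$; the terms $-n\mu(A)$ and $-n\mu(A)^{1-1/n}\mu(A)^{1/n}=-n\mu(A)$ cancel, leaving exactly $n\W^{\mu}_1(A;B)+M_\mu(A)\ge n\mu(A)^{1-1/n}\mu(B)^{1/n}$, which is \eqref{e:isop1} after dividing by $n$.

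The equality case $A=B$ is then immediate: here $(1-t)A+tA=A$, so $P\equiv\mu(A)$ and $P'(0^+)=0$, and the identity above collapses to $\W^{\mu}_1(A;A)+\tfrac{1}{n}M_\mu(A)=\mu(A)=\mu(A)^{1-1/n}\mu(A)^{1/n}$. Finally, \eqref{e:isop2} follows by specializing \eqref{e:isop1} to $B=r\bola$, which is weakly unconditional and convex: since $A+tr\bola=A+(tr)\bola$ one has $\W^{\mu}_1(A;r\bola)=\tfrac{r}{n}\mu^+(A)$, and plugging this in and multiplying by $n$ gives $r\mu^+(A)+M_\mu(A)\ge n\mu(A)^{1-1/n}\mu(r\bola)^{1/n}$, with equality at $A=r\bola$ inherited from the equality case of \eqref{e:isop1}.
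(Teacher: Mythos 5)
Your overall strategy is exactly the paper's: concavity of $t\mapsto\mu\bigl((1-t)A+tB\bigr)^{1/n}$ via Theorem \ref{t:BM_prod_quasi-conc} (with the same observation that weak unconditionality and convexity are preserved under convex combinations), the chord-slope bound for the right derivative at $t=0$, the splitting $(1-t)A+tB=(1-t)\bigl(A+\tfrac{t}{1-t}B\bigr)$ into a dilation part and a Minkowski-sum part, the equality discussion for $A=B$, and the specialization $B=r\bola$ with $\W^{\mu}_1(A;r\bola)=\tfrac{r}{n}\mu^+(A)$. However, there is a genuine gap at precisely the step you flag yourself: the identity
\[
\deriv{+}{0}\mu\bigl((1-t)A+tB\bigr)=n\,\W^{\mu}_1(A;B)+M_\mu(A)-n\mu(A)
\]
is \emph{granted}, not proved, and the route you sketch to establish it cannot work under the stated hypotheses. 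The densities $\phi_i$ are only positively decreasing --- they need not be continuous, let alone smooth (cf.\ Proposition \ref{p:linBMimplies_quasi}) --- and $A,B$ are arbitrary weakly unconditional convex sets, so no first-order boundary-integral expansion of $\mu(A+tB)$ is available; ``the regularity of the densities'' you invoke is simply not among the assumptions. Nor can the gap be closed by soft concavity reasoning: what the argument requires is the upper bound $\deriv{+}{0}\mu\bigl((1-t)A+tB\bigr)\leq n\W^{\mu}_1(A;B)+M_\mu(A)-n\mu(A)$, to be played against the chord-slope lower bound, whereas superadditivity of one-sided directional derivatives of the jointly concave function $(r,s)\mapsto\mu(rA+sB)^{1/n}$ yields only the opposite, useless, inequality. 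So the decoupling genuinely needs a computation.

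The paper closes exactly this step with no regularity at all: it sets $g(r,s)=\mu\bigl(r(A+sB)\bigr)$, observes that $\mu\bigl((1-t)A+tB\bigr)=g\bigl(1-t,\tfrac{t}{1-t}\bigr)$, and computes
\[
\deriv{+}{0}g\left(1-t,\frac{t}{1-t}\right)=-\deriv{-}{1}\mu(tA)+\deriv{+}{0}\mu(A+tB)=M_\mu(A)-n\mu(A)+n\,\W^{\mu}_1(A;B),
\]
the existence of the two one-sided derivatives on the right being exactly what Corollary \ref{c:1/n-concavity} supplies. So the corollary you cite is indeed the correct ingredient, but its role is to legitimize the chain-rule decomposition itself (in particular, your liminf-defined $\W^{\mu}_1(A;B)$ becomes an honest limit), not to feed a boundary-integral formula. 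Until you supply this computation --- or an equivalent justification of the splitting --- inequality \eqref{e:isop1}, its equality case for $A=B$ (which, with $B=A$, amounts to differentiability of $t\mapsto\mu(tA)$ at $t=1$), and consequently \eqref{e:isop2} all remain unproven.
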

\begin{proof}
Let $f:[0,1]\longrightarrow\R_{\geq0}$ be the function given by
\begin{equation}\label{e:auxfunction_proofisopineq}
 f(t)=\mu\bigl((1-t)A+tB\bigr)^{1/n}-\bigl((1-t)\mu(A)^{1/n}+t\mu(B)^{1/n}\bigr).
\end{equation}
By Theorem \ref{t:BM_prod_quasi-conc} $f$ is a concave function (we notice that the fact of being a weakly unconditional set is closed under convex combinations) satisfying $f(0)=f(1)=0$. Thus, the right derivative of $f$ at $t=0$ exists
(cf.~\cite[Theorem~23.1]{Ro}) and furthermore
\begin{equation}\label{e:f'+(0)}
\deriv{+}{0}f(t)\geq0
\end{equation}
with equality if and only if $f(t)=0$ for all $t\in[0,1]$, i.e., if and only if \eqref{e:BM_prod_quasi-conc}
holds with equality for all $t\in[0,1]$.

Now, since
\begin{equation*}
\deriv{+}{0}f(t)=\frac{1}{n}\mu(A)^{1/n-1}\deriv{+}{0}\mu\bigl((1-t)A+tB\bigr)+\mu(A)^{1/n}-\mu(B)^{1/n},
\end{equation*}
we just must compute the right derivative at $0$ of $\mu\bigl((1-t)A+tB\bigr)$ to conclude the proof.

To this end, we notice that, by Corollary \ref{c:1/n-concavity}, the one-sided derivatives of $\mu(tA)$ and $\mu(A+tB)$ at $t=1$ and $t=0$, respectively, exist. Hence, writing $g(r,s)=\mu\bigl(r(A+sB)\bigr)$, we have
\begin{equation*}
\begin{split}
\deriv{+}{0}\mu\bigl((1-t)A+tB\bigr)&=\deriv{+}{0} g\left(1-t,\frac{t}{1-t}\right)\\
=-\deriv{-}{1}\mu(tA)+\deriv{+}{0}\mu(A+tB)&=M_\mu(A)-n\mu(A)+ n\W^{\mu}_1(A;B),
\end{split}
\end{equation*}
and thus
\begin{equation*}
\deriv{+}{0}f(t)=\frac{1}{n}\mu(A)^{1/n-1}\bigl(M_\mu(A)-n\mu(A)+ n\W^{\mu}_1(A;B)\bigr)+\mu(A)^{1/n}-\mu(B)^{1/n}.
\end{equation*}

Now, the latter identity, together with \eqref{e:f'+(0)}, gives \eqref{e:isop1}. Finally, the assertion about the equality condition comes from the characterization of the equality case in \eqref{e:f'+(0)}, whereas \eqref{e:isop2} is just \eqref{e:isop1} for $B=r\bola$.
\end{proof}

\begin{remark}
We notice that the assumption of convexity in the above result is needed to assure both the existence of the one-sided derivatives of $\mu(tA)$ and $\mu(A+tB)$ at $t=1$ and $t=0$, respectively, and the concavity of the function $f$ defined in \eqref{e:auxfunction_proofisopineq}.

However, assuming that the derivatives
\begin{equation}\label{e:deriv}
\deriv{\,\,}{1}\mu(tA), \quad  \deriv{+}{0}\mu(A+tr\bola)
\end{equation}
exist, one may obtain the same inequalities \eqref{e:isop1} and \eqref{e:isop2} just by applying Theorem \ref{t:BM_prod_quasi-conc} and ``differentiating'' in both sides (see the proof of \cite[Corollary~4]{LiMaNaZv}). Thus, by the above result, and assuming that the derivative of $\mu(tr\bola)$ at $t=1$ exists, we may assert the following:

\emph{Euclidean balls} $r\bola$ \emph{minimize the functional} $r\mu^+\bigl(\overline{A}\bigr)+ M_\mu\bigl(\overline{A}\bigr)$ \emph{among all sets} $A$ in $\R^n$ \emph{with predetermined measure} $\mu(A)=\mu(r\bola)$ (provided that \eqref{e:deriv} exist for their weakly unconditional hull $\overline{A}$).
\end{remark}

\begin{remark}
When $\mu$ is the classical Lebesgue measure $\vol$ in $\R^n$, inequality \eqref{e:isop1} becomes the classical Minkowski first inequality since $M_\vol=0$. From this, the classical Euclidean isoperimetric inequality is easily obtained. We would like to point out in this remark that the isoperimetric inequality may also be obtained from the linear Brunn-Minkowski inequality in Example~\ref{e:BMproye_vol} for sets having a maximal section of equal area as follows:

Assume that $E\subset\R^n=\R\times\R^{n-1}$ is a measurable set of finite volume with
\[
\sup_{t\in\R}\vol_{n-1}(E(t))<+\infty,
\]
where $E(t)=\{x\in\R^{n-1}: (t,x)\in E\}$. Take $r>0$ so that
\[
\sup_{t\in\R} \vol_{n-1}(E(t))=\sup_{t\in\R} \vol_{n-1}(r\B(t)),
\]
where $\B=\B_n\subset\R^n$ is the closed unit ball. Theorem~3.1 then implies
\[
\vol_n((1-\la)E+\la(r\B))\ge (1-\la)\vol_n(E)+\la\vol_n(r\B).
\]
Substracting $(1-\la)^n\vol_n(E)$ from both sides and dividing by $\la$, we obtain
\begin{multline*}
(1-\la)^{n-1}\frac{\vol_n(E+\tfrac{\la}{1-\la}(r\B))-\vol_n(E)}{\tfrac{\la}{1-\la}}
\\
\ge
(1-\la)\frac{(1-(1-\la)^{n-1})\vol_n(E)}{\la}+\vol_n(r\B).
\end{multline*}
Taking $\liminf$ when $\la$ goes to $0$, since $nW_1^\vol(E,r\B)=r\mu^+(E)$, we get
\[
\mu^+(E)-\frac{(n-1)}{r}\vol_n(E)\ge r^{n-1}\vol_n(\B).
\]
As $\mu^+(\B)=n\vol_n(\B)$, we obtain
\begin{equation}
\label{eq:1stineq}
\mu^+(E)\ge \mu^+(r\B)+\frac{(n-1)}{r}\big(\vol_n(E)-\vol_n(r\B)\big).
\end{equation}

Now define the function
\[
f(r):=\mu^+(r\B)+\frac{(n-1)}{r}\big(\vol_n(E)-\vol_n(r\B)\big).
\]
Its derivative is given by
\[
f'(r)=\frac{(n-1)}{r^2}\big(\vol_n(r\B)-\vol_n(E)\big).
\]
Hence for the unique $r_0>0$ such that $\vol_n(r_0\B)=\vol_n(E)$ we have $f'(r_0)=0$. Moreover, $f'(r)>0$ when $r>r_0$ and $f'(r)<0$ for $r<r_0$. This implies that $r_0$ is a global minimum for $f$. From \eqref{eq:1stineq} we get
\begin{equation*}\label{eq:Euclisop}
\mu^+(E)\ge f(r)\ge f(r_0)=\mu^+(r_0\B).
\end{equation*}
This is the Euclidean isoperimetric inequality.

We stress the fact that only the linear isoperimetric inequality in the Euclidean space is necessary to obtain the classical isoperimetric inequality.
\end{remark}

Although, as we have commented in Section \ref{s:main}, in general it is not enough to consider sets with a common maximal measure section (through parallel hyperplanes) in order to get a linear Brunn-Minkowski inequality, they will not be so far from being convenient sets for this goal: it is sufficient to impose further that this maximum is attained at the `central section'. This is the content of the following result.
\begin{teor}\label{t:BM_lineal_med_1}
Let $\mu_1^{(1)}$ be the measure on $\R$ given by $\dlat\mu_1^{(1)}(x)=\phi_1(x)\,\dlat x$, where $\phi_1:\R\longrightarrow\R_{\geq0}$ is a positively decreasing function, and let $\mu_2^{(n-1)}$ be the
measure on $\R^{n-1}$ given by $\dlat\mu_2^{(n-1)}(x)=\phi_2(x)\,\dlat x$,
where $\phi_2:\R^{n-1}\longrightarrow\R_{\geq0}$ is a $q$-concave function, with $-1/(n-1)\leq q\leq \infty$.
Consider the product measure $\mu=\mu_1^{(1)}\times\mu_2^{(n-1)}$ on $\R^n$.

Let $\lambda\in(0,1)$ and let $A,B\subset\R^n$ be non-empty measurable sets such that
$(1-\lambda)A+\lambda B$ is also measurable and so that
\begin{equation*}
\mu_2^{(n-1)}(A(0))=\sup_{x\in\R}\mu_2^{(n-1)}(A(x))=\sup_{x\in\R}\mu_2^{(n-1)}(B(x))=\mu_2^{(n-1)}(B(0)).
\end{equation*}
Then
\begin{equation*}
\mu\bigl((1-\lambda)A+\lambda B\bigr)\geq(1-\lambda)\mu(A)+\lambda\mu(B).
\end{equation*}
\end{teor}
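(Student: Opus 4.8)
The plan is to regard $\R^n$ as the product $\R\times\R^{n-1}$, to set $(X,\di_X,\mu_X)=(\R,\enorm{\,\cdot\,},\mu_1^{(1)})$ and $(Y,\di_Y,\mu_Y)=(\R^{n-1},\enorm{\,\cdot\,},\mu_2^{(n-1)})$, and then to invoke Corollary \ref{c:linBM_ProdMetSpa}. First I note that the product metric $\rho$ on $\R\times\R^{n-1}$ assembled from the two Euclidean distances is again the Euclidean distance on $\R^n$, so by Proposition \ref{p:starProdSpac} the operation $\star$ reduces to the usual Minkowski combination, $(1-\lambda)A\star\lambda B=(1-\lambda)A+\lambda B$; likewise $\mu=\mu_1^{(1)}\times\mu_2^{(n-1)}$ is exactly the product measure appearing in the corollary. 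Thus the statement to be proved is precisely the conclusion of Corollary \ref{c:linBM_ProdMetSpa}, and the work lies entirely in verifying its hypotheses.

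For the families I would take $\mathcal{F}_X$ to be the weakly unconditional subsets of $\R$ (those containing the origin) and $\mathcal{F}_Y$ to be all measurable subsets of $\R^{n-1}$. By Example \ref{ex:2}, $\mathcal{F}_X$ satisfies $\overline{\mathrm{BM}}(1)$ because $\phi_1$ is positively decreasing. To see that $\mathcal{F}_Y$ satisfies $\mathrm{BM}(p)$ for a suitable $p$, I would feed the Borell--Brascamp--Lieb inequality (Theorem \ref{t:BBL}) in $\R^{n-1}$ the functions $f=\chi_{_E}\phi_2$, $g=\chi_{_F}\phi_2$, and $h=\chi_{_{(1-\lambda)E+\lambda F}}\phi_2$ for measurable $E,F\subset\R^{n-1}$. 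When $x\in E$, $y\in F$ and $z=(1-\lambda)x+\lambda y$, the $q$-concavity of $\phi_2$ gives $h(z)=\phi_2(z)\geq M_q(\phi_2(x),\phi_2(y),\lambda)=M_q(f(x),g(y),\lambda)$, whereas if $x\notin E$ or $y\notin F$ one argument vanishes and the right-hand side is $0\le h(z)$; hence the pointwise hypothesis of Theorem \ref{t:BBL} holds on all of $\R^{n-1}$. Since $q\geq -1/(n-1)$, the theorem applies and yields
\begin{equation*}
\mu_2^{(n-1)}\bigl((1-\lambda)E+\lambda F\bigr)\geq M_{p}\bigl(\mu_2^{(n-1)}(E),\mu_2^{(n-1)}(F),\lambda\bigr),\qquad p=\frac{q}{(n-1)q+1},
\end{equation*}
so that $(Y,\di_Y,\mu_Y)$ satisfies $\mathrm{BM}(p)$.

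It then remains to check conditions (i) and (ii) of Corollary \ref{c:linBM_ProdMetSpa} for the given $A,B$. Condition (i), that every section $A(t),B(t)$ lies in $\mathcal{F}_Y$, is automatic since $\mathcal{F}_Y$ consists of all measurable sets. For condition (ii) I must show that each superlevel set $\{x\in\R:\mu_2^{(n-1)}(A(x))\geq t\}$ and $\{x\in\R:\mu_2^{(n-1)}(B(x))\geq t\}$ contains the origin for every $0<t<\alpha$, where $\alpha=\sup_x\mu_2^{(n-1)}(A(x))=\sup_x\mu_2^{(n-1)}(B(x))$ is the common value supplied by the hypothesis. This is exactly where the central-section assumption enters: because $\mu_2^{(n-1)}(A(0))=\alpha$ and $\mu_2^{(n-1)}(B(0))=\alpha$, for every $t<\alpha$ the origin satisfies $\mu_2^{(n-1)}(A(0))\geq t$ and $\mu_2^{(n-1)}(B(0))\geq t$, so $0$ lies in both superlevel sets, which are therefore weakly unconditional and lie in $\mathcal{F}_X$. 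With (i), (ii) and the common-supremum condition in hand, Corollary \ref{c:linBM_ProdMetSpa} delivers the desired linear inequality.

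The main obstacle — and the very reason for the central-section hypothesis — is condition (ii). The measure $\mu_1^{(1)}$ satisfies $\overline{\mathrm{BM}}(1)$ only for origin-containing sets and genuinely fails otherwise, as Example \ref{ex:2} shows, so the one-dimensional linear step in the proof of Theorem \ref{t:linBM_ProdMetSpa} can be applied to the superlevel sets only if all of them contain the origin; demanding that the maximal $(n-1)$-dimensional section be attained at $x=0$ is precisely the minimal positional assumption guaranteeing this. By contrast, the $q$-concavity of $\phi_2$ makes $\mathrm{BM}(p)$ hold in $\R^{n-1}$ for \emph{arbitrary} sections with no positional constraint, which is why no unconditionality of $A,B$ in the $\R^{n-1}$ directions is required.
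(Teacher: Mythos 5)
Your proposal is correct and follows essentially the same route as the paper: the paper likewise applies Theorem \ref{t:BBL} to $f=\chi_{_{C}}\phi_2$, $g=\chi_{_{D}}\phi_2$, $h=\chi_{_{E}}\phi_2$ to show that $(\R^{n-1},\enorm{\,\cdot\,},\mu_2^{(n-1)})$ satisfies $\mathrm{BM}(p)$ with $p=q/((n-1)q+1)$, and then invokes Corollary \ref{c:linBM_ProdMetSpa} with $X=\R$, $Y=\R^{n-1}$, $\mathcal{F}_X$ the family of origin-containing sets, together with Example \ref{ex:2}. Your verification of condition ii) via the central-section hypothesis is exactly the (unstated) content of the paper's ``immediate consequence,'' just spelled out explicitly.
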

\begin{proof}
First we notice that, by Theorem \ref{t:BBL}, $(\R^{n-1},\enorm{\,\cdot\,},\mu_2^{(n-1)})$ satisfies $\mathrm{BM}(p)$ for $p=q/((n-1)q+1)$. Indeed, it is enough to consider the functions $f=\chi_{_{C}}\phi_2$, $g=\chi_{_{D}}\phi_2$ and $h=\chi_{_{E}}\phi_2$ for any measurable sets $C,D,E\subset\R^{n-1}$ such that $E\supset (1-\lambda) C+\lambda D$, and apply Theorem \ref{t:BBL}.

Therefore, now the proof is an immediate consequence of Corollary \ref{c:linBM_ProdMetSpa} (with $X=\R$, $Y=\R^{n-1}$ and $\mathcal{F}_X$ being the family of sets in $\R$ containing the origin) together with Example \ref{ex:2}.
\end{proof}

Let $\mu$ be the measure on $\R^n$ given by $\dlat\mu(x)=\phi(x)\dlat x$.
Given a hyperplane $H\in\L^n_{n-1}$, we will denote by $\widetilde{\mu}_{n-1}$ the `marginal' of the measure $\mu$
with respect to $H$, i.e., for any $y\in H^\perp$ and $C\subset y+H$,
\begin{equation*}
\widetilde{\mu}_{n-1}(C)=\int_C \phi_{|_{\left(y+H\right)}}\,\dlat x.
\end{equation*}
With this notation, we have the following result, which is a direct consequence of Theorem \ref{t:BM_lin2_MetSpa} together with Theorem \ref{t:BBL}. It can be also obtain from \cite[Theorem~3.2]{DaUr}; we include it here for the sake of completeness.

\begin{teor}\label{t:BM_lineal_med_2}
Let $\mu$ be the measure on $\R^n$ given by $\dlat\mu(x)=\phi(x)\dlat x$, where $\phi$ is a $p$-concave function,
with $-1/n\leq p\leq \infty$.

Let $\lambda\in(0,1)$ and let $A,B\subset\R^n$ be non-empty measurable sets such that
$(1-\lambda)A+\lambda B$ is also measurable and so that
\begin{equation*}
\sup_{y\in H^\perp}\widetilde{\mu}_{n-1}\bigl(A\cap(y+H)\bigr)=
\sup_{y\in H^\perp}\widetilde{\mu}_{n-1}\bigl(B\cap(y+H)\bigr)
\end{equation*}
with respect to a hyperplane $H\in\L^n_{n-1}$.
Then
\begin{equation*}
\mu\bigl((1-\lambda)A+\lambda B\bigr)\geq(1-\lambda)\mu(A)+\lambda\mu(B).
\end{equation*}
\end{teor}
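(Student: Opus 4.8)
The plan is to realize $\R^n$ as the product $H^\perp\times H$ and apply Theorem~\ref{t:BM_lin2_MetSpa} with $X=H^\perp$ and $Y=H$. Fixing the hyperplane $H\in\L^n_{n-1}$, I would identify $H^\perp$ with $\R$ and $H$ with $\R^{n-1}$, each equipped with its Euclidean distance and Lebesgue measure, $\vol_1$ and $\vol_{n-1}$ respectively. Under this identification the product metric on $H^\perp\times H$ is exactly the Euclidean distance on $\R^n$, so the $\star$-operation reduces to the Minkowski convex combination: $(1-\lambda)A\star\lambda B=(1-\lambda)A+\lambda B$. Consequently a function that is $p$-concave in the metric sense of the paper (via $\star$) is $p$-concave in the usual Euclidean sense, so the given $\phi$ fits the hypothesis of Theorem~\ref{t:BM_lin2_MetSpa} with $\mu_{X\times Y}=\vol_n$ and $\dlat\mu=\phi\,\dlat x$.

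Next I would verify the two structural hypotheses. The factor $(\R,\enorm{\cdot},\vol_1)$ satisfies $\overline{\mathrm{BM}}(1)$: by translation invariance one has $\vol_1\bigl((1-\lambda)A+\lambda B\bigr)\geq\vol_1\bigl((1-\lambda)a_0+\lambda B\bigr)=\lambda\vol_1(B)$ for any $a_0\in A$, and symmetrically, so the one-dimensional linear inequality holds for all non-empty measurable sets with no restriction on their measures. For the second factor, Theorem~\ref{t:BBL} applied in dimension $n-1$ says precisely that $(\R^{n-1},\enorm{\cdot},\vol_{n-1})$ satisfies $\mathrm{BBL}(p,n-1)$, with exponent $q=q(p,n-1)=p/((n-1)p+1)$. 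Here one only needs $p\geq-1/(n-1)$, which follows from $p\geq-1/n$ since $-1/n>-1/(n-1)$; this parameter check (together with $n\ge 2$, so that $m=n-1>0$) is the one bookkeeping point to keep in mind. Both $\vol_1$ and $\vol_{n-1}$ are $\sigma$-finite, as required.

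Finally I would translate the section hypothesis. Writing a point of $\R^n$ as $(t,w)$ with $t\in H^\perp$ and $w\in H$, the inner integral occurring in Theorem~\ref{t:BM_lin2_MetSpa} is $\int_H\chi_{_A}(t,w)\phi(t,w)\,\dlat w=\int_{A\cap(t+H)}\phi\,\dlat w=\widetilde{\mu}_{n-1}\bigl(A\cap(t+H)\bigr)$, by the very definition of the marginal $\widetilde{\mu}_{n-1}$. Hence the equal-maximal-marginal-section assumption $\sup_{y\in H^\perp}\widetilde{\mu}_{n-1}(A\cap(y+H))=\sup_{y\in H^\perp}\widetilde{\mu}_{n-1}(B\cap(y+H))$ is exactly the hypothesis $\sup_{x\in X}\int_Y\chi_{_A}\phi\,\dlat\mu_Y=\sup_{x\in X}\int_Y\chi_{_B}\phi\,\dlat\mu_Y$ of Theorem~\ref{t:BM_lin2_MetSpa}. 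With all hypotheses in place, that theorem yields $\mu\bigl((1-\lambda)A+\lambda B\bigr)\geq(1-\lambda)\mu(A)+\lambda\mu(B)$, the desired conclusion.

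I do not expect a genuine obstacle: the argument is essentially a dictionary between the product-space formulation of Theorem~\ref{t:BM_lin2_MetSpa} and the Euclidean one. The only points deserving care are the admissibility of the exponent ($p\geq-1/n\Rightarrow p\geq-1/(n-1)$, so that $\mathrm{BBL}(p,n-1)$ is available from Theorem~\ref{t:BBL}) and the identification of the one-variable integral of $\phi$ over affine hyperplanes parallel to $H$ with the marginal measure $\widetilde{\mu}_{n-1}$; everything else—$\sigma$-finiteness, the reduction of $\star$ to $+$, and the coincidence of the two notions of $p$-concavity—is immediate.
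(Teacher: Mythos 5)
Your proposal follows exactly the paper's route: the paper obtains this theorem as a direct consequence of Theorem~\ref{t:BM_lin2_MetSpa} applied to the splitting $\R^n=H^\perp\times H$ together with the classical Borell--Brascamp--Lieb inequality (Theorem~\ref{t:BBL}) in dimension $n-1$, and your dictionary (product metric $=$ Euclidean metric, hence $\star=+$; the inner integral $\int_H\chi_{_A}(t,w)\phi(t,w)\,\dlat w$ equals the marginal $\widetilde{\mu}_{n-1}\bigl(A\cap(t+H)\bigr)$; $p$-concavity via $\star$ equals ordinary $p$-concavity; and $p\geq-1/n$ implies $p\geq-1/(n-1)$, so $\mathrm{BBL}(p,n-1)$ is available) is precisely the required bookkeeping. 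One local slip: your justification of $\overline{\mathrm{BM}}(1)$ for $(\R,\enorm{\,\cdot\,},\vol_1)$ does not work as written, since the two bounds $\vol_1\bigl((1-\lambda)A+\lambda B\bigr)\geq\lambda\vol_1(B)$ and $\vol_1\bigl((1-\lambda)A+\lambda B\bigr)\geq(1-\lambda)\vol_1(A)$ only yield the maximum of the two terms, not their sum. The fact itself is the classical one-dimensional Brunn--Minkowski inequality \eqref{e:BM}, which the paper takes as known; the standard argument translates compact $A,B$ so that $\max A=\min B=0$, whence $(1-\lambda)A$ and $\lambda B$ are both contained in $(1-\lambda)A+\lambda B$ and meet in at most one point, giving the linear inequality, which extends to arbitrary measurable sets by inner regularity. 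With that citation or repair, your proof is complete and coincides with the paper's.
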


\begin{remark}
Theorems \ref{t:BM_lineal_med_1} and \ref{t:BM_lineal_med_2} provide us with the answer to the desired linear version of the Gaussian Brunn-Minkowski inequality \eqref{e:linGaussBM}. Indeed, it holds for any pair of non-empty measurable sets $A,B\subset\R^n$,
with $(1-\lambda)A+\lambda B$ measurable for $\lambda\in(0,1)$,
for which there exists a hyperplane $H\in\L^n_{n-1}$ such that either
\begin{equation*}
\begin{split}
\gamma_{n-1}(A\cap H)&=\sup_{y\in H^\perp}\gamma_{n-1}\bigl(A\cap(y+H)\bigr)\\
&=\sup_{y\in H^\perp}\gamma_{n-1}\bigl(B\cap(y+H)\bigr)=\gamma_{n-1}(B\cap H)
\end{split}
\end{equation*}
or
\begin{equation*}
\sup_{y\in H^\perp}\widetilde{\gamma}_{n-1}\bigl(A\cap(y+H)\bigr)=
\sup_{y\in H^\perp}\widetilde{\gamma}_{n-1}\bigl(B\cap(y+H)\bigr).
\end{equation*}
\end{remark}

\begin{remark}
If $K\subset\R^n$ is a centrally symmetric convex body, $H$ a linear hyperplane, and $u$ a unit vector orthogonal to $H$, the function $t\mapsto \gamma_{n-1}(K\cap (tu+H))$ is clearly even, and log-concave by the Pr\'ekopa-Leindler inequality, Theorem~\ref{t:PrekopaLeindler}. Hence the maximal $\gamma_{n-1}$-sections corresponds to $t=0$. This implies that, for a pair of centrally symmetric convex bodies with equal $\gamma_{n-1}$-sections in the same direction through the origin, the linear Gaussian Brunn-Minkowski inequality \eqref{e:linGaussBM} holds.
\end{remark}

We finish this section by exploiting the above results, Theorems \ref{t:BM_lineal_med_1} and \ref{t:BM_lineal_med_2},
in order to get a general Brunn-Minkowski type inequality in $\R^n$ when working with measures associated to densities
$\phi:\R^n\longrightarrow\R_{\geq0}$ which are \emph{radially decreasing}, i.e, such that
$\phi(tx)\geq\phi(x)$ for all $x\in\R^n$ and any $t\in[0,1]$. This property, in a sense, will allow us to make up for the lack of homogeneity of the measure. For the sake of brevity, we will present the following result in the setting of Theorem \ref{t:BM_lineal_med_2}, although the same approach could be carried out with Theorem \ref{t:BM_lineal_med_1}.

To this end, we first notice that given two measurable sets $A,B\subset\R^n$ and a hyperplane $H\in\L^n_{n-1}$, and reordering if necessary, we will always have
\begin{equation}\label{e:condic_conseq_lin}
\sup_{y\in H^\perp}\widetilde{\mu}_{n-1}\bigl(A\cap(y+H)\bigr)\geq
\sup_{y\in H^\perp}\widetilde{\mu}_{n-1}\bigl(B\cap(y+H)\bigr).
\end{equation}
Assuming also that $\phi$ is continuous in $\R^n$ and that $B$ is a convex body containing the origin in its interior
then, by continuity arguments, one can always find a certain $t_0\geq1$ such that $A$ and $t_0B$ have a common maximal marginal measure section with respect to the hyperplane $H$. In this case, we may obtain a general Brunn-Minkowski type inequality for $A$ and $B$: this is the content of the following result, which is stated in a slightly more general setting.

\begin{corollary}
Let $\mu$ be the measure on $\R^n$ given by $\dlat\mu(x)=\phi(x)\dlat x$, where $\phi$ is a radially decreasing $p$-concave function, with $-1/n\leq p\leq \infty$.

Let $\lambda\in(0,1)$ and let $A,B\subset\R^n$ be measurable sets such that $(1-\lambda)A+\lambda B$ is also measurable and, without loss of generality, so that \eqref{e:condic_conseq_lin} holds for a given hyperplane $H\in\L^n_{n-1}$.
If the pair $(B,\mu)$ is such that there exists $t_0\geq1$
so that
\begin{equation*}
\sup_{y\in H^\perp}\widetilde{\mu}_{n-1}\bigl(A\cap(y+H)\bigr)=
\sup_{y\in H^\perp}\widetilde{\mu}_{n-1}\bigl((t_0B)\cap(y+H)\bigr),
\end{equation*}
then
\begin{equation*}\label{e:BM_consec_BMlineal}
\mu\bigl((1-\lambda)A+\lambda B\bigr)^{1/n}\geq(1-\lambda)\mu(A)^{1/n}+\lambda\,\frac{\mu\bigl(t_0B\bigr)^{1/n}}{t_0}.
\end{equation*}
\end{corollary}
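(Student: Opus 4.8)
The plan is to reduce everything to the linear Brunn--Minkowski inequality of Theorem~\ref{t:BM_lineal_med_2} applied to the pair $A$ and $\widetilde{B}:=t_0B$, which by hypothesis share a common maximal marginal measure section with respect to $H$, and then to upgrade the resulting \emph{linear} estimate to the desired $(1/n)$-form by exploiting the radial monotonicity of $\phi$ together with the concavity of $t\mapsto t^{1/n}$. First I would introduce the weight $w:=(1-\lambda)+\lambda/t_0$ and the parameter $s:=(\lambda/t_0)/w$. Since $t_0\geq1$ and $\lambda\in(0,1)$, one checks at once that $w\in(1-\lambda,1]$ and $s\in(0,1)$. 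The crucial algebraic observation is that
\begin{equation*}
(1-\lambda)A+\lambda B=w\,D,\qquad D:=(1-s)A+s\,\widetilde{B},
\end{equation*}
because $w(1-s)=1-\lambda$ and $ws\,t_0=\lambda$. Note that $D=w^{-1}\bigl((1-\lambda)A+\lambda B\bigr)$ is measurable, being a dilate of a measurable set, so Theorem~\ref{t:BM_lineal_med_2} is applicable.

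Next I would apply that theorem to $A$ and $\widetilde{B}$ with parameter $s$ (legitimate since $\phi$ is $p$-concave and $A,\widetilde{B}$ have a common maximal marginal section), obtaining $\mu(D)\geq(1-s)\mu(A)+s\,\mu(\widetilde{B})$. Because $w\leq1$ and $\phi$ is radially decreasing, the substitution $x=wy$ gives $\mu(wD)=\int_D\phi(wy)\,w^n\,\dlat y\geq w^n\mu(D)$, whence
\begin{equation*}
\mu\bigl((1-\lambda)A+\lambda B\bigr)=\mu(wD)\geq w^n\bigl((1-s)\mu(A)+s\,\mu(\widetilde{B})\bigr)=w^{n-1}\Bigl((1-\lambda)\mu(A)+\tfrac{\lambda}{t_0}\mu(t_0B)\Bigr),
\end{equation*}
where the last identity uses $1-s=(1-\lambda)/w$ and $s=(\lambda/t_0)/w$.

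Finally, taking $(1/n)$-th powers and factoring out the total weight $w=(1-\lambda)+\lambda/t_0$ of the bracket, the bound becomes
\begin{equation*}
\mu\bigl((1-\lambda)A+\lambda B\bigr)^{1/n}\geq w\Bigl(\tfrac{1-\lambda}{w}\,\mu(A)+\tfrac{\lambda/t_0}{w}\,\mu(t_0B)\Bigr)^{1/n};
\end{equation*}
since $\tfrac{1-\lambda}{w}+\tfrac{\lambda/t_0}{w}=1$, the concavity of $t\mapsto t^{1/n}$ (Jensen's inequality) yields $\mu((1-\lambda)A+\lambda B)^{1/n}\geq(1-\lambda)\mu(A)^{1/n}+\tfrac{\lambda}{t_0}\mu(t_0B)^{1/n}$, as claimed. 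The step I expect to be the main obstacle is tracking the directions of the two inequalities that radial monotonicity and concavity respectively provide: the factor $w^n$ coming from $\mu(wD)\geq w^n\mu(D)$ (valid precisely because $w\leq1$) must combine with the $w^{1/n}$ extracted from the bracket to leave exactly the prefactor $w$, which is what the normalized convex combination requires for the concavity of $t\mapsto t^{1/n}$ to close the argument.
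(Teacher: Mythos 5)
Your proof is correct and is essentially the paper's own argument: you use the same reweighting $s=(\lambda/t_0)/w$ with $w=(1-\lambda)+\lambda/t_0$ (the paper's $\overline{\lambda}$ and $D$), the same application of Theorem~\ref{t:BM_lineal_med_2} to $A$ and $\widetilde{B}=t_0B$, the same use of radial monotonicity under dilation by the factor $w\le 1$, and the concavity of $t\mapsto t^{1/n}$, differing only in the order in which the scaling and power-mean steps are performed (the paper applies the $1/n$-th power and concavity first and the dilation estimate last). Your explicit observation that $(1-s)A+s\widetilde{B}=w^{-1}\bigl((1-\lambda)A+\lambda B\bigr)$ is measurable, so the theorem applies, is a small point the paper leaves implicit.
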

\begin{proof}
Let $\widetilde{B}=t_0B$. By Theorem \ref{t:BM_lineal_med_2} we clearly have
\begin{equation}\label{e:prueba_BM_conseq_lineal}
\begin{split}
\mu\bigl((1-\overline{\lambda})A+\overline{\lambda}\, \widetilde{B}\bigr)^{1/n} &\geq\left((1-\overline{\lambda})\mu(A)+\overline{\lambda}\mu\bigl(\widetilde{B}\bigr)\right)^{1/n}\\
&\geq (1-\overline{\lambda})\mu(A)^{1/n}+\overline{\lambda}\mu\bigl(\widetilde{B}\bigr)^{1/n}
\end{split}
\end{equation}
for all $\overline{\lambda}\in[0,1]$. Now, we set
\[\overline{\lambda}=\frac{\lambda\frac{1}{t_0}}{(1-\lambda)+\lambda\frac{1}{t_0}}\]
and denote by $D=(1-\lambda)+\lambda/t_0$ (notice that $\overline{\lambda}, D\in(0,1)$). Then, we get
\begin{equation*}
\begin{split}
\frac{1}{D}\mu\bigl((1-\lambda)A+\lambda B\bigr)^{1/n}\geq\mu\left(\frac{(1-\lambda)A+\lambda B}{D}\right)^{1/n}
=\mu\bigl((1-\overline{\lambda})A+\overline{\lambda}\, \widetilde{B}\bigr)^{1/n}\\
\geq (1-\overline{\lambda})\mu(A)^{1/n}+\overline{\lambda}\mu\bigl(\widetilde{B}\bigr)^{1/n}=
\frac{1}{D}\left((1-\lambda)\mu(A)^{1/n}+\lambda\,\frac{\mu\bigl(t_0B\bigr)^{1/n}}{t_0}\right),
\end{split}
\end{equation*}
where in the first inequality above we have applied that $\phi$ is radially decreasing (together with the change of variables theorem) and the second one is just \eqref{e:prueba_BM_conseq_lineal}.
\end{proof}

\section{Other applications}\label{s:applications}

We conclude the paper by deriving some (recently known) Brunn-Min\-kowski inequalities from some results collected in Section \ref{s:main}. To this aim, first we give the following definition.
\begin{definition}
Let $x=(x_1,\dots,x_n),\, y=(y_1,\dots,y_n)\in\R_{>0}^n$ and $r,s>0$. We will denote by $x^r y^s$
the vector
$(x_1^r y_1^s,\dots,x_n^r y_n^s)$. In the same way,  we set
$$\bigl((1-\lambda)x^p+\lambda y^p\bigr)^{1/p}=\Bigl(\bigl((1-\lambda)x_1^p+\lambda y_1^p\bigr)^{1/p},\dots,\bigl((1-\lambda)x_n^p+\lambda y_n^p\bigr)^{1/p}\Bigr),$$ for $p\in(0,1)$.

Thus, given $\emptyset\neq A, B\subset\R_{>0}^n$ and $\lambda\in(0,1)$, we will write
\begin{equation*}
A^{1-\lambda}B^\lambda=\left\{a^{1-\lambda}b^\lambda: \, a\in A,\, b\in B\right\}
\end{equation*}
and
\begin{equation*}
\bigl((1-\lambda)A^{p}+\lambda B^p\bigr)^{1/p}=\left\{\bigl((1-\lambda)a^p+\lambda b^p\bigr)^{1/p}: \, a\in A,\, b\in B\right\}.
\end{equation*}
\end{definition}

\medskip

The classical Minkowski addition of two convex bodies \eqref{e:lincomb} may be extended via the $p$-th mean of two numbers.
More precisely: for $1\leq p\leq\infty$ fixed,
$K,L\subset\R^n$ convex bodies containing the origin and $\lambda\in(0,1)$, there exists a
(unique) convex body $(1-\lambda) K+_p\lambda L$ for which the \emph{support function}
\begin{equation}\label{e:def_p-sum}
h((1-\lambda) K+_p\lambda L,\cdot)^p=(1-\lambda) h(K,\cdot)^p+\lambda h(L,\cdot)^p.
\end{equation}
We recall that $h(K,u)=\max\bigl\{\esc{x,u}:x\in K\bigr\}$,
$u\in\s^{n-1}$, where, as usual, $\s^{n-1}$ denotes the
($n-1$)-dimensional unit sphere of $\R^n$ (for more information about the support function, see e.g. \cite[Section~1.7]{Sch2}).

This $L_p$-mean of convex bodies containing the origin was
introduced and studied by Firey in \cite{F62}) and turned out to be
the starting point for a fruitful theory, the so-called $L_p$\emph{-Brunn-Minkowski theory}
(for further details, we refer the reader to \cite[Subsection~9.1]{Sch2} and the references therein).

Clearly, when $p=1$, formula \eqref{e:def_p-sum} defines the classical
Minkowski mean $(1-\lambda) K+\lambda L$, whereas the case $p=\infty$ gives
\[
(1-\lambda) K+_{\infty}\lambda L=\conv(K\cup L),
\]
where $\conv$ denotes the convex hull of the given set.
Moreover, in \cite[Theorem~1]{F62} it is shown that, for all $1\leq p\leq
q$,
\begin{equation*}
(1-\lambda)K+_p\lambda L\subset(1-\lambda)K+_q\lambda L,
\end{equation*}
$\lambda\in[0,1]$.

In \cite[Theorem~2]{F62}, it was shown the following generalization of the classical Brunn-Minkowski inequality \eqref{e:BM}:
\begin{equation}\label{e: Firey_BM-p}
    \vol\bigl((1-\lambda)K+_p\lambda L\bigr)^{p/n}\geq(1-\lambda)\vol(K)^{p/n}+\lambda\vol(L)^{p/n},
\end{equation}
where $1\leq p\leq\infty$.

Since it is possible to extend the $L_p$-mean \eqref{e:def_p-sum} of two convex bodies $K, L$ containing the origin for the case $0\leq p<1$ via the set
\begin{equation*}\label{e:def p-sum p in [0,1)}
\Bigl\{x\in\R^n:\, \esc{x,u}\leq \bigl((1-\lambda) h(K,u)^p+\lambda h(L,u)^p\bigr)^{1/p} \text{ for all } u\in\s^{n-1}\Bigr\},
\end{equation*}
it is natural to wonder about the possibility of extending also its corresponding Brunn-Minkowski inequality
\eqref{e: Firey_BM-p}. The case $p=0$, i.e.,
\begin{equation*}
    \vol\bigl((1-\lambda)K+_0\lambda L\bigr)\geq\vol(K)^{1-\lambda}\vol(L)^{\lambda},
\end{equation*}
is known in the literature as the \emph{log-Brunn-Minkowski inequality}.
This (conjectured) inequality is, up to our knowledge, still open for arbitrary symmetric convex bodies in dimension $n\geq3$ (it is known to be true in the plane; see \cite{BoLuYaZh} and the references therein) and it currently arouses great interest in Convex Geometry and beyond. More recently, in \cite[Lemma~4.1, Proposition~4.2]{Sa}), the above inequality has been showed to be true in the case of unconditional convex bodies. The main idea of the proof is that it is enough to show the inequality for $A^{1-\lambda}B^{\lambda}$, where $A, B\subset\R^n_{>0}$, since
\begin{equation*}
\bigl(K\cap\R^n_{>0}\bigr)^{1-\lambda}\bigl(L\cap\R^n_{>0}\bigr)^{\lambda}\subset \bigl((1-\lambda)K+_0\lambda L\bigr)\cap\R^n_{>0}
\end{equation*}
together with the fact that an unconditional convex body is completely determined by its restriction to the positive orthant $\R^n_{>0}$.

In the same way, \eqref{e: Firey_BM-p} may be also obtained for the case $0<p<1$ when working with unconditional convex bodies (see \cite[Theorem~1.1]{Mar}). Once again it is enough to show it for $\bigl((1-\lambda)A^{p}+\lambda B^p\bigr)^{1/p}$, where $A, B\subset\R^n_{>0}$, because of the inclusion
\begin{equation*}
\Bigl((1-\lambda)\bigl(K\cap\R^n_{>0}\bigr)^{p}+\lambda \bigl(L\cap\R^n_{>0}\bigr)^p\Bigr)^{1/p}\subset \bigl((1-\lambda)K+_p\lambda L\bigr)\cap\R^n_{>0}.
\end{equation*}

\medskip

Here we provide with the proof of the above-mentioned inequalities for $A^{1-\lambda}B^{\lambda}$ and $\bigl((1-\lambda)A^{p}+\lambda B^p\bigr)^{1/p}$, respectively, for given sets $A, B\subset\R^n_{>0}$, as the aftermath of some results from Section \ref{s:main}.

First we show how, as a consequence of Corollary \ref{c:PL}, one can derive the following result (it can be found in \cite[Proposition~4.2]{Sa}) which, as we have commented above (and it is shown in \cite[Lemma~4.1, Proposition~4.2]{Sa}), allows us to deduce the log-Brunn-Minkowski inequality for unconditional convex bodies.
The proof we present here is essentially the same to the one therein, but we collect it because we think that by noticing that $A^{1-\lambda}B^\lambda$ is $(1-\lambda)A\star^\prime\lambda B$, for a convenient distance $\di$ on $\R_{>0}$, one can easily perceive how this result may be proven.
\begin{corollary}\label{c:logBM}
Let $\lambda\in(0,1)$ and let $A, B\subset\R_{>0}^n$ be measurable sets such that $A^{1-\lambda}B^\lambda$ is also measurable. Then
\begin{equation*}
\vol\bigl(A^{1-\lambda}B^\lambda\bigr)\geq\vol(A)^{1-\lambda}\vol(B)^{\lambda}.
\end{equation*}
\end{corollary}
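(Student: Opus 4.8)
The plan is to realize the set $A^{1-\lambda}B^\lambda$ as a $\di$-convex combination for the logarithmic distance on $\R_{>0}$ and then to invoke the product machinery of Corollary~\ref{c:PL}. Concretely, I would equip $\R_{>0}$ with the distance $\di_1(s,t)=|\log s-\log t|$, obtained as in Definition~\ref{d:+_d_assoc_to_phi} from the bijection $\log\colon\R_{>0}\to\R$; since the Euclidean distance on $\R$ is strictly intrinsic, so is $\di_1$, and the associated operation is $(1-\lambda)s+_{\di_1}\lambda t=\exp\bigl((1-\lambda)\log s+\lambda\log t\bigr)=s^{1-\lambda}t^{\lambda}$. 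Taking the product metric on $\R_{>0}^n=\R_{>0}\times\cdots\times\R_{>0}$ gives $\di_n(x,y)=\|\log x-\log y\|$, and by Proposition~\ref{p:starProdSpac} the resulting convex combination is exactly $(1-\lambda)A\star\lambda B=A^{1-\lambda}B^\lambda$. The measure on each factor is the one-dimensional Lebesgue measure $\vol_1$, which is $\sigma$-finite and locally finite, while the product measure on $\R_{>0}^n$ is the Radon measure $\vol$; thus all the measure-theoretic hypotheses of Corollary~\ref{c:PL} are satisfied.

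The core of the argument is to check that $(\R_{>0},\di_1,\vol_1)$ satisfies $\mathrm{PL}$. Suppose $f,g,h\colon\R_{>0}\to\R_{\geq0}$ satisfy $h(s^{1-\lambda}t^{\lambda})\geq f(s)^{1-\lambda}g(t)^{\lambda}$ for all $s,t$. I would pass to logarithmic coordinates by setting $\tilde f(u)=f(e^{u})e^{u}$, and likewise $\tilde g,\tilde h$. Writing $s=e^{a}$, $t=e^{b}$ and $c=(1-\lambda)a+\lambda b$, the factor $e^{c}=(e^{a})^{1-\lambda}(e^{b})^{\lambda}$ absorbs cleanly into the hypothesis, yielding $\tilde h(c)\geq\tilde f(a)^{1-\lambda}\tilde g(b)^{\lambda}$, which is precisely the pointwise condition of the classical Pr\'ekopa-Leindler inequality in $\R$ (Theorem~\ref{t:PrekopaLeindler}). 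Since the change of variables gives $\int_{\R_{>0}}f\,\dlat x=\int_{\R}\tilde f\,\dlat u$ (and similarly for $g$ and $h$), Theorem~\ref{t:PrekopaLeindler} applied to $\tilde f,\tilde g,\tilde h$ returns $\int h\geq(\int f)^{1-\lambda}(\int g)^{\lambda}$, that is, $\mathrm{PL}$ on $\R_{>0}$. Inserting the indicators $f=\chi_A$, $g=\chi_B$, $h=\chi_{A^{1-\lambda}B^{\lambda}}$ then shows that $(\R_{>0},\di_1,\vol_1)$ also satisfies $\mathrm{BM}(0)$.

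From here I would run an induction on $n$ using Corollary~\ref{c:PL}. The base case $n=1$ is the $\mathrm{BM}(0)$ property just established. For the inductive step, write $\R_{>0}^{n}=\R_{>0}\times\R_{>0}^{n-1}$; since the first factor satisfies $\mathrm{PL}$ and the second satisfies $\mathrm{BM}(0)$ by the inductive hypothesis, Corollary~\ref{c:PL} gives $\mathrm{BM}(0)$ on $\R_{>0}^{n}$. Applying this to $A,B$ with $C=A^{1-\lambda}B^{\lambda}$ yields $\vol(A^{1-\lambda}B^{\lambda})\geq\vol(A)^{1-\lambda}\vol(B)^{\lambda}$ whenever $\vol(A)\vol(B)>0$; the remaining case $\vol(A)\vol(B)=0$ is trivial, since then the right-hand side vanishes by our convention for $M_0$.

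The only delicate point is the verification of $\mathrm{PL}$ on the single factor $\R_{>0}$: one must track the Jacobian $e^{u}$ of the logarithmic substitution, and the whole reduction hinges on the happy fact that this Jacobian is itself log-linear, so that the multiplicative constraint $h(s^{1-\lambda}t^{\lambda})\geq f(s)^{1-\lambda}g(t)^{\lambda}$ transforms into the additive one required by the classical Pr\'ekopa-Leindler inequality. Everything else is a direct application of the results already established in Section~\ref{s:main}.
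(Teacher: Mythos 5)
Your proposal is correct and takes essentially the same route as the paper: the logarithmic distance $\di(x,y)=\enorm{\log x-\log y}$ on $\R_{>0}$ from Definition~\ref{d:+_d_assoc_to_phi}, the identification $A^{1-\lambda}B^{\lambda}=(1-\lambda)A\star\lambda B$ via Proposition~\ref{p:starProdSpac}, the verification of $\mathrm{PL}$ on $(\R_{>0},\di,\vol_1)$ through the substitution $x=e^{u}$ (the paper applies Theorem~\ref{t:PrekopaLeindler} to $\widetilde{f}\,e^{u}$, $\widetilde{g}\,e^{u}$, $\widetilde{h}\,e^{u}$, which is exactly your device of absorbing the log-linear Jacobian into the tilded functions), and the passage to $\R_{>0}^n$ via Corollary~\ref{c:PL}. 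Your explicit induction on $n$ and the separate remark on the case $\vol(A)\vol(B)=0$ merely spell out steps the paper leaves implicit.
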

\begin{proof}
Let $\di$ be the distance on $\R_{>0}$ defined (as in Definition \ref{d:+_d_assoc_to_phi}) via $\log(\cdot)$, i.e., $\di(x,y)=\enorm{\log(x)-\log(y)}$, and let $\star=\star_\di=+_\di$ be the operation in $\R_{>0}$ that we will consider.

Since $A^{1-\lambda}B^\lambda=(1-\lambda)A\star^\prime\lambda B$, where $\star^\prime$ (cf.~Proposition \ref{p:starProdSpac}) is the ``natural'' operation in $\R_{>0}^n$ as a product space of $(\R,\di)$, and by Corollary  \ref{c:PL}, it is enough to show that $(\R_{>0},\di)$ satisfies $\mathrm{PL}$ (and thus, it will also satisfy $\mathrm{BM}(0)$).

To this aim, let $f,g,h:\R_{>0}\longrightarrow\R_{\geq0}$ non-negative measurable functions such that
\begin{equation*}
h((1-\lambda)x+_d\lambda y)\geq f(x)^{1-\lambda}g(y)^{\lambda},
\end{equation*}
for all $x,y\in\R_{>0}$, and $\lambda\in(0,1)$. Then, denoting by $\widetilde{h}(\bar{x})=h\bigl(e^{\bar{x}}\bigr)$ (and analogously $\widetilde{f}$, $\widetilde{g}$) and $\bar{x}=\log(x)$, $\bar{y}=\log(y)$, we have
\begin{equation*}
\begin{split}
\widetilde{h}((1-\lambda)\bar{x}+\lambda \bar{y})&= h\bigl(e^{(1-\lambda)\log(x)+\lambda \log(y)}\bigr) =h((1-\lambda)x+_d\lambda y)\\
&\geq f(x)^{1-\lambda}g(y)^{\lambda}
=f\bigl(e^{\bar{x}}\bigr)^{1-\lambda}g\bigl(e^{\bar{y}}\bigr)^{\lambda}
=\widetilde{f}(\bar{x})^{1-\lambda}\widetilde{g}(\bar{y})^{\lambda}.
\end{split}
\end{equation*}
Thus, by Theorem \ref{t:PrekopaLeindler} (and using that $e^x$ is log-concave), we have
\begin{equation*}
\int_{\R}\widetilde{h}\,e^x\,\dlat\bar{x}\geq \left(\int_\R \widetilde{f}\,e^x\,\dlat\bar{x}\right)^{1-\lambda}
\left(\int_\R \widetilde{g}\,e^x\,\dlat\bar{x}\right)^{\lambda},
\end{equation*}
which means that
\begin{equation*}
\int_{\R_{>0}}h\,\dlat x\geq \left(\int_{\R_{>0}} f\,\dlat x\right)^{1-\lambda}
\left(\int_{\R_{>0}} g\,\dlat x\right)^{\lambda},
\end{equation*}
which concludes the proof.
\end{proof}

Now we deal with the Brunn-Minkowski inequality \eqref{e: Firey_BM-p} for the $L_p$-mean of unconditional convex bodies, for $p\in(0,1)$ (see \cite[Theorem~1.1]{Mar}). As we have previously commented, it is an immediate consequence of the following Brunn-Minkowski type inequality collected in Corollary \ref{c:p-suma}. To show it, we need the following auxiliary result.
\begin{lema}\label{l: p-BM}
Let $n\in\Z_{>0}$, $X=\R_{>0}$ and let $\di$ be the distance given by $\di(x,y)=\enorm{x^p-y^p}$, where $0<p<1$. Then the space $(X,\di,\vol_1)$ satisfies $\mathrm{BBL}(\alpha/(r\alpha+1),1)$ for $r=0,1,\dots,n-1$, where $\alpha=p/(n-np)$.
\end{lema}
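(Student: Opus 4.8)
The plan is to remove the twist in the operation by a change of variable, reducing the statement to a one–dimensional Borell–Brascamp–Lieb inequality for a power–weighted measure, and then to run that inequality through Theorem~\ref{t:BBL}.

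First I would identify the operation $\star$ on $(X,\di)$. Since $\di(x,y)=\enorm{x^p-y^p}$ is precisely the distance attached by Definition~\ref{d:+_d_assoc_to_phi} to the injective map $\phi(x)=x^p$, the set $(1-\la)\{x\}\star\la\{y\}$ is the single point $z=\bigl((1-\la)x^p+\la y^p\bigr)^{1/p}$, the weighted $p$–mean of $x$ and $y$. Passing to the variable $t=x^p$ therefore turns $\star$ into the usual convex combination $(1-\la)u+\la v$, while the measure $\vol_1$ is pushed forward to $\di\nu=\tfrac1p\,t^{1/p-1}\di t$. Writing $\tilde f(u)=f(u^{1/p})$, and similarly for $g$ and $h$, the defining hypothesis $h(z)\ge M_q\bigl(f(x),g(y),\la\bigr)$ becomes $\tilde h\bigl((1-\la)u+\la v\bigr)\ge M_q\bigl(\tilde f(u),\tilde g(v),\la\bigr)$, and each integral $\int_X f\,\di\vol_1$ equals, up to the common factor $\tfrac1p$, the weighted integral $\int_0^\infty\tilde f(t)\,t^{1/p-1}\di t$. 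Thus $\mathrm{BBL}(q_r,1)$ for $(X,\di,\vol_1)$ is equivalent to a weighted one–dimensional Borell–Brascamp–Lieb inequality for $\nu$ with input exponent $q_r=\alpha/(r\alpha+1)$ and output exponent $q_r/(q_r+1)=\alpha/((r+1)\alpha+1)$.

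To establish this weighted inequality I would exploit that the density $t^{1/p-1}$ is a pure power: raised to the exponent $p/(1-p)$ it becomes the affine function $t$, so the weight $w(t)=t^{1/p-1}$ is $\tfrac{p}{1-p}$–concave and in fact realises the $p/(1-p)$–mean with equality along convex combinations. Combining the $M_{q_r}$–concavity of the triple $(\tilde f,\tilde g,\tilde h)$ with this concavity of the weight, through the Hölder/reverse–Hölder inequality for $p$–means (\cite[Theorem~1, p.~178]{Bu}), one produces a modified triple $\tilde f\,w,\tilde g\,w,\tilde h\,w$ satisfying an $M_c$–type hypothesis for a suitable $c$, to which the one–dimensional Borell–Brascamp–Lieb inequality, Theorem~\ref{t:BBL} with $n=1$, applies and returns exactly the weighted integrals. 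The heart of the matter is the exponent bookkeeping in this last step: one has to fix the concavity parameter used for the weight, check that its interaction with the input exponent $q_r$ leaves the output exponent equal to $q_r/(q_r+1)$, and verify the admissibility hypothesis of Theorem~\ref{t:BBL}, which is where the restriction $q_r\le\alpha$ (equivalently $q_r/(q_r+1)\le p$, the exponent dictated by the $p$–concavity of $\nu$) becomes essential.

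I expect this exponent computation to be the only genuine obstacle, precisely because the naive bookkeeping is delicate and must land on the announced parameters rather than a weaker exponent; the remaining points (measurability of the superlevel sets entering Theorem~\ref{t:BBL}, non-emptiness of $\{\tilde f\ge t\}$ and $\{\tilde g\ge t\}$, and reduction to the case $\int\tilde f\int\tilde g>0$) are routine. All the geometric content sits in the identification of $\star$ with the $p$–mean and in the power–concavity of the weight $t^{1/p-1}$.
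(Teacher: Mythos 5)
Your reconstruction of the strategy coincides with the paper's (change of variables $t=x^p$, the exact $M_{p/(1-p)}$-affinity of the weight $w(t)=t^{1/p-1}$ along convex combinations, a H\"older step to pass to the weighted triple, then Theorem~\ref{t:BBL} with $n=1$), but the one step you explicitly deferred --- checking that the exponent bookkeeping ``lands on the announced parameters rather than a weaker exponent'' --- is exactly where the argument breaks, and it cannot be repaired. Writing $q_r=\alpha/(r\alpha+1)$, so that $1/q_r=r+n(1-p)/p$, the reverse H\"older inequality for means applied to
\[
\widetilde h(\bar z)\,w(\bar z)\;\ge\; M_{q_r}\bigl(\widetilde f(\bar x),\widetilde g(\bar y),\lambda\bigr)\,M_{p/(1-p)}\bigl(w(\bar x),w(\bar y),\lambda\bigr)
\]
yields an $M_c$-hypothesis on the triple $(\widetilde f w,\widetilde g w,\widetilde h w)$ only with $1/c=1/q_r+(1-p)/p$, and this harmonic addition of exponents is sharp; Theorem~\ref{t:BBL} then returns the output exponent $c/(c+1)$, whose reciprocal is $1+r+(n+1)(1-p)/p$, strictly larger than the reciprocal $1+r+n(1-p)/p$ of the announced $q_r/(q_r+1)$. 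Since $M_q$ is increasing in $q$, your route proves only the weaker $\mathrm{BBL}(q_r,1/p)$ --- the weighted half-line genuinely behaves as a space of ``dimension'' $1/p>1$ --- and replacing the weight's concavity exponent $p/(1-p)$ by a smaller one (such as the $\alpha_r$-concavity invoked in the paper) only decreases $c$ further.

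Moreover, the obstacle is not an artifact of the method: the lemma is false in the stated functional generality. Take $n=1$, $p=1/2$ (so $\alpha=q_0=1$ and the claim is $\mathrm{BBL}(1,1)$), $\lambda=\tfrac12$, and for $A\neq B>0$ set
\[
f(x)=\bigl(A-\sqrt{x}\bigr)_+,\qquad g(x)=\bigl(B-\sqrt{x}\bigr)_+,\qquad h(x)=\Bigl(\tfrac{A+B}{2}-\sqrt{x}\Bigr)_+ .
\]
The point $z\in\tfrac12\{x\}\star\tfrac12\{y\}$ satisfies $\sqrt z=\tfrac{\sqrt x+\sqrt y}{2}$, so $h(z)\ge M_1\bigl(f(x),g(y),\tfrac12\bigr)$ holds with equality wherever $f(x)g(y)>0$, and trivially otherwise by the convention $M_q(a,b,\lambda)=0$ when $ab=0$. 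Yet $\int f\,\dlat x=A^3/3$, $\int g\,\dlat x=B^3/3$, $\int h\,\dlat x=\tfrac13\bigl(\tfrac{A+B}{2}\bigr)^3$, so the asserted conclusion $\int h\ge M_{1/2}\bigl(\int f,\int g,\tfrac12\bigr)$ reads $M_1\bigl(A,B,\tfrac12\bigr)\ge M_{3/2}\bigl(A,B,\tfrac12\bigr)$, which fails whenever $A\ne B$; these tents in fact give equality for the correct output exponent $\bigl(1+1/q_0+(1-p)/p\bigr)^{-1}=1/3$, confirming sharpness. You should know the paper's own proof follows your same plan and glosses over the identical loss: its justification that $x^{1/p-1}$ is $\alpha_r$-concave implicitly uses $M_{\alpha_r}(a,b)\,M_{\alpha_r}(u,v)\ge M_{\alpha_r}(au,bv)$, which is false for $\alpha_r>0$ (the analogous step in Corollary~\ref{c:logBM} is lossless only because $M_0\cdot M_0=M_0$). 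What does survive --- and is all that Corollary~\ref{c:p-suma} actually requires --- is the statement for characteristic functions: for indicator triples the weight transfers with its full exponent $p/(1-p)$ and no H\"older loss, the values being $0$ or $1$, whence the line satisfies $\mathrm{BM}(p)$ and $\mathrm{BM}(p/n)$ in $\R_{>0}^n$ follows by applying Theorem~\ref{t:BBL} directly to the $\alpha$-concave density $\prod_i t_i^{1/p-1}$, bypassing the functional induction of Corollary~\ref{c:ndimBBL}.
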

\begin{proof}
Let $f,g,h:X\longrightarrow\R_{\geq0}$ non-negative measurable functions such that
\begin{equation*}
h((1-\lambda)x+_d\lambda y)\geq f(x)^{1-\lambda}g(y)^{\lambda},
\end{equation*}
for all $x,y\in X$, and $\lambda\in(0,1)$. Let $\widetilde{h}:\R\longrightarrow\R_{\geq0}$ be the function given by
\begin{equation*}
\widetilde{h}(\bar{x}) =\left\{\begin{array}{ll}
h\bigl(\bar{x}^{1/p}\bigr) & \text{ if } \, \bar{x}>0,\\[2mm]
0 & \text{ otherwise}
\end{array}\right.
\end{equation*}
(and let $\widetilde{f},\,\widetilde{g}:\R\longrightarrow\R_{\geq0}$ be the functions defined analogously with respect to $f$ and $g$).

The proof is now similar to that of Corollary \ref{c:logBM} but exchanging the geometric mean $M_0$ by the mean $M_{\alpha_r}$ where $\alpha_r=\alpha/(r\alpha+1)$ and applying the classical $\mathrm{BBL}$ inequality in $\R$, Theorem \ref{t:BBL}. Notice also that for this approach we need that $x^{1/p-1}$ is $\alpha_r$-concave (a fact that can be easily checked by testing that  $(1/p-1)\alpha_r\in(0,1)$) and that $\alpha_r\geq-1$ for $r=0,\dots,n-1$.
\end{proof}

\begin{corollary}\label{c:p-suma}
Let $\lambda\in(0,1)$, $p\in(0,1)$, and let $A, B\subset\R_{>0}^n$ be measurable sets such that $\bigl((1-\lambda)A^{p}+\lambda B^p\bigr)^{1/p}$ is also measurable. Then
\begin{equation*}\label{e:logBM}
\vol\left(\bigl((1-\lambda)A^{p}+\lambda B^p\bigr)^{1/p}\right)^{p/n}\geq(1-\lambda)\vol(A)^{p/n}+\lambda\vol(B)^{p/n}.
\end{equation*}
\end{corollary}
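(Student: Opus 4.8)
The plan is to recognize this corollary as a direct application of the one-dimensional Borell--Brascamp--Lieb inequality established in Lemma \ref{l: p-BM} together with the tensorization result of Corollary \ref{c:ndimBBL}. First I would fix the metric space $(\R_{>0},\di,\vol_1)$ with $\di(x,y)=\enorm{x^p-y^p}$, which is exactly the distance considered in Lemma \ref{l: p-BM}. As in Definition \ref{d:+_d_assoc_to_phi} (applied to the injective map $x\mapsto x^p$ on $\R_{>0}$), its associated operation is $(1-\lambda)x\star\lambda y=\bigl((1-\lambda)x^p+\lambda y^p\bigr)^{1/p}$, and hence, by Proposition \ref{p:starProdSpac}, the product operation on $\R_{>0}^n$ satisfies
\[
(1-\lambda)A\star\lambda B=\bigl((1-\lambda)A^{p}+\lambda B^p\bigr)^{1/p}
\]
for all $A,B\subset\R_{>0}^n$. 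This identifies the set appearing in the statement with a $\di$-convex combination.

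Next, setting $\alpha=p/(n-np)>0$, Lemma \ref{l: p-BM} tells us that $(\R_{>0},\di,\vol_1)$ satisfies $\mathrm{BBL}(\alpha/(r\alpha+1),1)$ for $r=0,1,\dots,n-1$. I would then invoke Corollary \ref{c:ndimBBL} with $m=1$ (the hypothesis $\alpha\geq-1/n$ being trivial since $\alpha>0$) to conclude that $(\R_{>0}^n,\di_{X^n},\vol_n)$ satisfies $\mathrm{BBL}(\alpha,n)$. A short computation then pins down the resulting exponent: since $n\alpha+1=1/(1-p)$, the conjugate parameter is $q(\alpha,n)=\alpha/(n\alpha+1)=p/n$.

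Finally I would feed characteristic functions into this $\mathrm{BBL}(\alpha,n)$ inequality. Taking $f=\chi_{A}$, $g=\chi_{B}$ and $h=\chi_{C}$ with $C=\bigl((1-\lambda)A^{p}+\lambda B^p\bigr)^{1/p}$, the hypothesis $h(z)\geq M_\alpha\bigl(f(x),g(y),\lambda\bigr)$ for $z\in(1-\lambda)\{x\}\star\lambda\{y\}$ holds: when $x\in A$ and $y\in B$ both sides equal $1$ (because $z\in C$ by the identity above), and otherwise $f(x)g(y)=0$ forces $M_\alpha=0$ by the convention for $p$-th means. The conclusion of $\mathrm{BBL}(\alpha,n)$ then reads $\vol(C)\geq M_{p/n}\bigl(\vol(A),\vol(B),\lambda\bigr)$, and raising both sides to the power $p/n$ yields the claimed inequality.

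The computations are all routine; the only points requiring care are the identification of the operation $\star$ with the $L_p$-type combination of sets and the verification that the parameter produced by Corollary \ref{c:ndimBBL} is precisely $p/n$. There is no genuine obstacle here, since all of the analytic content has been front-loaded into Lemma \ref{l: p-BM} and Corollary \ref{c:ndimBBL}.
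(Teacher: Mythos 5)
Your route is essentially the paper's own: Lemma \ref{l: p-BM} together with Corollary \ref{c:ndimBBL} (with $m=1$) gives $\mathrm{BBL}(\alpha,n)$ for $(\R_{>0}^n,\di^n,\vol)$ with $\alpha=p/(n-np)$; your exponent computation $q(\alpha,n)=\alpha/(n\alpha+1)=p/n$ is correct; and your explicit identification of the product operation $\star$ with $\bigl((1-\lambda)A^{p}+\lambda B^p\bigr)^{1/p}$ (via Definition \ref{d:+_d_assoc_to_phi} applied to $x\mapsto x^p$ and Proposition \ref{p:starProdSpac}) and the specialization of BBL to characteristic functions are precisely the steps the paper leaves implicit, so spelling them out is welcome.

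There is, however, one genuine omission. Since $p/n>0$ and the paper's convention sets $M_q(a,b,\lambda)=0$ whenever $ab=0$, plugging $f=\chi_{_A}$, $g=\chi_{_B}$, $h=\chi_{_C}$ into $\mathrm{BBL}(\alpha,n)$ only yields $\mathrm{BM}(p/n)$, i.e.\ the inequality under the extra hypothesis $\vol(A)\vol(B)>0$. The corollary as stated carries no such restriction, and the degenerate case is not vacuous: if $\vol(A)=0<\vol(B)$, the claimed inequality amounts to $\vol(C)\geq\lambda^{n/p}\vol(B)$ for $C=\bigl((1-\lambda)A^{p}+\lambda B^p\bigr)^{1/p}$, whereas your BBL conclusion degenerates to $\vol(C)\geq 0$. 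The paper closes exactly this gap with its final step, upgrading $\mathrm{BM}(p/n)$ to $\overline{\mathrm{BM}}(p/n)$ via Proposition \ref{p:BMzeromeas_sets}; its hypotheses hold here because $x\mapsto x^p$ is a coordinatewise homeomorphism of $\R_{>0}^n$, so $(\R_{>0}^n,\di^n)$ is locally compact and $\vol$ is a strictly positive Radon measure for this metric. Alternatively, you could settle the degenerate case by hand: fix $a\in A$ and observe that each coordinate map $y_i\mapsto\bigl((1-\lambda)a_i^p+\lambda y_i^p\bigr)^{1/p}$ has derivative bounded below by $\lambda^{1/p}$, so the image of $B$ inside $C$ has measure at least $\lambda^{n/p}\vol(B)$. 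With this one additional step your proof is complete and coincides with the paper's.
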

\begin{proof}
Let $\di$ be the distance on $\R_{>0}$ given by $\di(x,y)=\enorm{x^p-y^p}$. Then, by Lemma \ref{l: p-BM} and Corollary \ref{c:ndimBBL}, $(\R_{>0}^n, \di^n,\vol)$ satisfies $\mathrm{BBL}(\alpha,n)$ (notice that $\alpha=p/(n-np)\geq-1/n$).
Thus, and taking into account that $q=q(\alpha,n)=\alpha/(n\alpha+1)=p/n$, we may assert that $(\R_{>0}^n, \di^n,\vol)$ satisfies $\mathrm{BM}$($p/n$) (and thus also $\overline{\mathrm{BM}}$($p/n$), by Proposition \ref{p:BMzeromeas_sets}).
This concludes the proof.
\end{proof}


\end{document}